\documentclass[a4paper,11pt,reqno]{amsart}
\usepackage{amsfonts}
\usepackage{amsmath}
\usepackage{logicproof}
\usepackage{amssymb, amsthm,bm}
\usepackage{tabularx}
\usepackage{caption}
\usepackage{booktabs}
\usepackage{multirow,dsfont}
\usepackage{array}
\usepackage{floatrow}
\usepackage{float}
\usepackage{floatpag}
\usepackage{centernot}
\usepackage{enumitem}
\newcolumntype{L}[1]{>{\raggedright\let\newline\\\arraybackslash\hspace{0pt}}m{#1}}
\newcolumntype{C}[1]{>{\centering\let\newline\\\arraybackslash\hspace{0pt}}m{#1}}
\newcolumntype{R}[1]{>{\raggedleft\let\newline\\\arraybackslash\hspace{0pt}}m{#1}}
\usepackage[symbol]{footmisc}
\usepackage[labelfont=bf,format=plain,justification=raggedright,singlelinecheck=false]{caption}
\usepackage{mathrsfs}
\usepackage[]{mdframed}
\usepackage[colorlinks]{hyperref}
\usepackage{tcolorbox}
\usepackage{adjustbox}
\tcbuselibrary{theorems}
\newtcbtheorem[number within=section]{mytheo}{Note}
{colback=white!5,colframe=black!35!black,fonttitle=\bfseries}{th}
\newcommand{\bk}[1]{{\color{blue}[BK: #1]}}
\numberwithin{equation}{section}

\usepackage{graphicx}
\usepackage{textgreek}

 {
      \theoremstyle{plain}
      \newtheorem{assumption}{Assumption}
  }
\newtheorem{theorem}{Theorem}[section]

\newtheorem{remark}[theorem]{Remark}
\newtheorem{corollary}[theorem]{Corollary}
\newtheorem{prop}[theorem]{Proposition}
\newtheorem{lemma}[theorem]{Lemma}

\begin{document}
	
	\title[Regularized Mean and Covariance Estimation]{Minimax Optimal Estimation of Mean and Covariance Functions with Spectral Regularization}
\author[N. Gupta]{Naveen Gupta}
\address[N. Gupta]{Department of Statistics, Pennsylvania State University, USA}
\email{ngupta.maths@gmail.com}
\author[B. K. Sriperumbudur]{Bharath K. Sriperumbudur}
\address[B. K. Sriperumbudur]{Department of Statistics, Pennsylvania State University, USA}
\email{bks18@psu.edu}
\begin{abstract}
Estimation of the mean and covariance functions is a fundamental problem in functional data analysis, particularly for discretely observed functional data. In this work, we study a regularization-based framework for estimating the mean and the covariance functions within a reproducing kernel Hilbert space (RKHS) setting. Our approach utilizes a spectral regularization technique under H\"{o}lder-type source conditions, allowing for a broad class of regularization schemes and accommodating a wide range of smoothness assumptions on the target functions. Unlike previous works in the literature, the proposed work does not require the target functions to belong to the underlying RKHS. Convergence rates for the proposed estimators are derived, and optimality is established by obtaining matching minimax lower bounds.
\end{abstract}

\keywords{}
	\maketitle

\section{Introduction}
Functional data analysis (FDA) has become a central framework for modeling intrinsically infinite-dimensional data, where each observational unit is viewed as a realization of a random function rather than a finite-dimensional vector \cite{ramsay1991some, ramsay2002afda, Theoreticalfoundationshsing2015,kokoszka2017}. Such data arise naturally in many applications such as biomedical signals, environmental monitoring, finance, and modern sensing technologies. Two fundamental objects in FDA are: the mean function, which captures the average structure of the underlying stochastic process, and the covariance function (or equivalently the covariance operator), which characterizes the second-order behavior of the process. Estimation of these quantities is therefore essential for many applications such as functional regression, classification, clustering, and many more tasks \cite{muller2005generalized, clustering_2003_fd}. In many modern applications, functional observations are recorded sparsely and are contaminated with measurement errors, which has motivated a substantial body of work on the statistical analysis of functional data. 

Formally, let $X(\cdot)$ be a square integrable stochastic process defined on a compact domain $T \subset \mathbb{R}$ with $\{X_{i},1 \leq i \leq n\}$ being its i.i.d.~observations. Then the mean and the covariance functions are defined as $\mu_0(t) = \mathbb{E}[X(t)],~\forall~t \in T$ and
$$C_{0}(s,t) = \mathbb{E}[(X(s)-\mu_{0}(s))(X(t)-\mu_{0}(t))],~\forall~s,t\in T,$$ respectively. One can simply estimate the mean function $\mu_{0}$ and the covariance function $C_{0}$ by sample mean $\hat{\mu}= \frac{1}{n}\sum_{i=1}^{n}X_{i}$ and sample covariance $\hat{C}(s,t) = \frac{1}{n}\sum_{i=1}^{n}(X_{i}(s)-\hat{\mu}(s))(X_{i}(t)-\hat{\mu}(t))$, respectively. Under some mild conditions it can be shown that $\hat{\mu}$ and $\hat{C}$ converge to $\mu_{0}$ and $C_{0}$, respectively, at the parametric rate $n^{-\frac{1}{2}}$ in integrated squared error. However, this idealized setting is rarely encountered in practice, as complete observation of entire functional trajectories is uncommon. Instead, functional data are typically observed at a finite number of locations per curve, possibly at random, and are often contaminated by measurement error. In such settings, the sample mean and the sample covariance are no longer directly computable. This motivates consideration of the following observation model.

Suppose that the independent realizations $\{X_{i},1 \leq i \leq n\}$ of stochastic process $X$ are discretely observed with measurement errors,
\begin{equation}\label{model}
    Y_{ij} = X_{i}(t_{ij}) + \epsilon_{ij},~~1\leq j \leq m_i;~~1\leq i \leq n,
\end{equation}
where the discrete points $\{t_{ij},~1\leq j \leq m_{i},~1\leq i \leq n\}$ are i.i.d.~random following some common distribution on $T$. Let $\mathcal{M}(m)$ be the collection of sample frequencies $(m_{1},\ldots,m_{n})$ whose harmonic mean is $m$, i.e., 
$$\left(\frac{1}{n}\sum_{i=1}^{n}\frac{1}{m_{i}}\right)^{-1} = m.$$ We assume that the sample frequencies $(m_{1},\ldots,m_{n})$ belong to $\mathcal{M}(m)$. The measurement errors $\epsilon_{ij}$'s are independent and identically distributed with zero mean and finite variance $\sigma_{0}^2$. In addition, the data points $\{X_{i}\}$, discrete observation points $\{t_{ij}\}$, and measurement errors $\{\epsilon_{ij}\}$ are mutually independent. In functional data analysis, two different frameworks are commonly adopted depending on the nature of the observation points $\{t_{ij}\}$. Model (\ref{model}) is commonly adopted in the analysis of sparse functional and longitudinal data, where each trajectory is observed at a limited number of time points with measurement noise; see, for example, \cite{yao2005functional, hsing_2010_non_parametric}. This framework captures the intrinsic challenges arising from irregular sampling and measurement error in practical functional data settings. When the sampling locations are predetermined and fixed, the setting is referred to as a \emph{fixed design} \cite{cuevas2002,Medina_fixed_design}. Alternatively, when the sampling locations are random, the setting is known as a \emph{random design} \cite{cai2010nonparametric, Wong&Zhand_2019_covariance}. In this work, we focus on the latter framework. To keep the analysis general, we do not assume that $m_i$'s are the same.

Considering model $(\ref{model})$, the existing literature on estimation of the mean and covariance functions has largely followed two broad methodological lines. Early work is primarily based on estimating the mean and covariance functions through pooled smoothing across curves, followed by functional principal component analysis (FPCA), where population-level quantities are obtained by aggregating local or spline smoothers applied to the observed data \cite{hall_2006_principle_compo, zhang_2007_statstical_inference, hsing_2010_non_parametric, Wang&zhang_2016_sparse_to_dense, wang&zhang_2018_longitudinal_and_functional, yao2005functional}. While \cite{hall_2006_principle_compo, zhang_2007_statstical_inference} employ traditional smoothing schemes, \cite{hsing_2010_non_parametric, Wang&zhang_2016_sparse_to_dense, wang&zhang_2018_longitudinal_and_functional} develop local linear smoothing frameworks in which estimation is carried out through sample-wide weighting mechanisms. Under appropriate smoothness assumptions on the covariance kernel, convergence rates had been established for the eigenfunctions (derived from the Karhunen-Lo\`eve expansion)~\cite{hall_2006_principle_compo}, and  
nonparametric estimators of the mean and covariance functions in sparse designs~\cite{hsing_2010_non_parametric}.
\cite{Wang&zhang_2016_sparse_to_dense, wang&zhang_2018_longitudinal_and_functional} further analyzed the transition between sparse and dense functional data regimes and proposed optimal weighting strategies that yield rate-optimal estimation across these settings. Comprehensive overviews of functional data analysis and its methodological developments can be found in \cite{wang2016functional}, which discusses smoothing-based approach for both sparse and densely observed functional data.   

While smoothing-based approaches remain widely used, they involve an intermediate reconstruction of individual trajectories and typically do not arise from the minimization of a single global objective. An alternate approach to this problem that does not require an intermediate reconstruction of trajectories, 
arises from kernel methods and statistical learning, where estimation problems are formulated as regularized empirical risk minimization in reproducing kernel Hilbert spaces (RKHS). 
Under this framework, Cai and Yuan \cite{Cai_2011_mean} proposed a penalized least-squares estimator for the mean function using Tikhonov regularization penalty in a Sobolev RKHS, and established minimax-optimal convergence rates under the assumption that the sample paths of the underlying process belong to the corresponding Sobolev space. For covariance estimation, Cai and Yuan \cite{cai2010nonparametric} developed an RKHS-based regularization method tailored to the bivariate structure of covariance functions and 
showed the resulting estimator to 
achieve sub-optimal convergence rates, again under the assumption that the sample paths of the underlying process lies inside the RKHS. 
More recently, Wong and Zhang \cite{Wong&Zhand_2019_covariance} advanced this line of research by introducing an operator-theoretic regularization framework for covariance estimation, formulating the problem directly at the operator level and analyzing Tikhonov-type regularization within the Sobolev RKHS setting. This operator-theoretic perspective has since been extended to incorporate additional structural assumptions. In particular, Wang et al. \cite{wang_2022_low_rank} studied low-rank covariance function estimation for multidimensional functional data, combining operator regularization with rank constraints on the covariance operator, demonstrating the flexibility of operator-based frameworks in handling complex functional domains and structural features. Despite these advances, many open questions remain in this direction. 
In particular, the current RKHS-based methods are largely restricted to Sobolev spaces and Tikhonov regularization \cite{Cai_2011_mean, cai2010nonparametric, Wong&Zhand_2019_covariance}, and their theoretical guarantees are predominantly derived under the well-specified assumption, i.e., the target function belongs to the underlying RKHS. Moreover, while operator-theoretic formulations have been proven to be useful for covariance estimation, analogous unified treatments for mean estimation and their behavior under model misspecification remain insufficiently understood. The present work aims to address these gaps by developing a general RKHS-based regularization framework for mean and covariance estimation under random design, allowing for flexible spectral regularization and providing a unified analysis that applies to both well-specified and misspecified regimes.

\subsection{Contributions} The main contributions of this work are summarized as follows:\vspace{1mm}\\
\emph{(i)} Beyond the classical well-specified setting, we study estimation of both the mean and covariance functions under model misspecification, where the target function does not necessarily belong to a pre-specified RKHS. This setting is practically relevant yet largely unexplored in the existing literature, and our analysis characterizes the behavior of regularized estimators in this more general regime.\vspace{1mm}\\
\emph{(ii)} We substantially extend the regularization-based frameworks of \cite{Cai_2011_mean, cai2010nonparametric}---they deal only with Sobolev RKHS and Tikhonov regularization---by developing a general RKHS formulation, which 
accommodates a broad class of kernels and regularization schemes (see \eqref{mean_estimator} and \eqref{eq:cov}) and allows a wider range of smoothness conditions on the target functions, leading to sharper convergence rates tailored to the target function's regularity.\vspace{1mm}\\
\emph{(iii)} We show that the proposed estimator for the mean function (see \eqref{mean_alternate_estimator}) achieves minimax-optimal convergence rates across all considered scenarios, including both well-specified and misspecified settings (see Theorems~\ref{thm:mean} and \ref{thm:mean-lower}), with no rate saturation unlike in Tikhonov regularization (see Remark~\ref{rem:saturate}). \vspace{1mm}\\
\emph{(iv)} For covariance estimation, unlike in the previous works \cite{cai2010nonparametric, Wong&Zhand_2019_covariance}, which assume the mean function to be zero, we propose a covariance function estimator (see \eqref{eq:cov}) that employs the mean function estimator proposed in \eqref{mean_alternate_estimator} (without assuming the true mean function to be known or zero), and show its minimax optimality (see Theorems~\ref{thm:covariance}, \ref{thm:cov-lower-1}, and \ref{thm:lower_covariance}) in both well-specified and misspecified settings, improving upon the sub-optimal convergence rates of \cite{cai2010nonparametric,Wong&Zhand_2019_covariance}. \vspace{1mm}\\
\emph{(v)} In both the settings of mean and covariance function estimation, we show that when $m \gtrsim n^{\frac{1}{2\alpha b}}$, the resulting convergence rates are parametric, 
where $\alpha$ and $b$ denote the smoothness index of the target (mean or covariance function) and the RKHS, respectively. Moreover, in the context of covariance function estimation, compared to \cite{cai2010nonparametric}, our analysis yields an improvement by a logarithmic factor in the corresponding condition on $m$ when $\alpha = \tfrac{1}{2}$.\vspace{1mm}\\
\emph{(vi)} More importantly, all the aforementioned results are obtained under significantly weaker assumptions (compared to previous works) on the stochastic process, thereby making the results of this work applicable to a wide range of stochastic processes (see Remark~\ref{rem:saturate}\emph{(i)}).

\subsection{Organization}
The mean function and covariance function estimators, their upper and lower convergence analysis are presented in Sections~\ref{Sec:mean} and \ref{sec:cov}. The missing proofs are captured in Section~\ref{sec:proof}. Supplementary results related to the mean function and covariance function estimation are collected in Appendices~\ref{app:mean} and \ref{app:cov}, respectively, while some technical results needed to prove all these main results are collected in Appendix~\ref{app:tech}.

\subsection{Definitions and Notation}
A Hilbert space $(\mathcal{H}, \langle \cdot, \cdot \rangle_{\mathcal{H}})$ of functions from $D$ to $\mathbb{R}$ is called an RKHS if the pointwise evaluation map $\delta_{x} : \mathcal{H} \to \mathbb{R}~(f \mapsto f(x))$ is continuous for each point $x \in D$. It is well known in the literature that there is a one-to-one correspondence between an RKHS and a reproducing kernel. Let $k$ be the reproducing kernel (r.k.) associated with the RKHS $\mathcal{H}$. Then, we have that for each $f \in \mathcal{H}$ and $x \in D$,~ $f(x)= \langle f, k(x, \cdot) \rangle_{\mathcal{H}}$. Throughout this paper, we assume that all r.k.'s are continuous and bounded, i.e., 
$\sup_{x \in T}k(x,x) \leq \kappa^2 < \infty$ for some constant $\kappa$. For more details on RKHS, see \cite{paulsen2016rkhs}. 

For non-negative sequences $(a_n)_n$ and $(b_n)_n$, we say $a_n\lesssim b_n$ for all $n$, if there exists a universal constant $c$ not depending on $n$ such that $a_n\le cb_n$ for all $n$. For a random variable $\chi\in \mathbb{R}$
with distribution $P$ and a constant $e$, we use $\chi 
\lesssim_p e$ to denote the fact that for any $\delta > 0$, there
exists a positive constant $K_\delta<\infty$ such that $P(\chi \le K_\delta e) \ge \delta$. $\Vert A\Vert_{\text{op}}$ denotes the operator norm of an operator $A$. For any positive integer $m$, $[m]$ denotes the set $\{1,2,\ldots,m\}.$ $\mathcal{R}(A)$ denotes the range space of operator $A$.


\section{Mean Estimation} \label{Sec:mean}

Let $\mathcal{H}_{1}$ be an RKHS with r.k.~ 
$k : T \times T \to \mathbb{R}$. Define $J_{1}: \mathcal{H}_{1} \to L^2(T),\,f\mapsto f$ as the inclusion operator and $J_{1}^*$ as its adjoint. Define the integral operator $\Lambda_{1}:= J_{1}J_{1}^*$, which is given as
$$\Lambda_{1}f(\cdot) = \int_{T}k(s,\cdot)f(s)ds,~ \forall~f \in L^2(T).$$

\noindent
Observe that for all $i\in[n],\,j\in[m_{i}]$,
\begin{equation*}
    \begin{split}
        \mathbb{E}[Y_{ij}] =  \mathbb{E}[X(t_{ij})] + \mathbb{E}[\epsilon_{ij}]
        = \mu_{0}(t_{ij}).
    \end{split}
\end{equation*}

So, an RKHS based estimator of $\mu_0$ can be written as
\begin{equation}\label{mean_esti_variation}
\begin{split}
    \hat{\mu}&:= \arg \min_{\mu \in \mathcal{H}_{1}}\frac{1}{n}\sum_{i=1}^{n}\frac{1}{m_{i}}\sum_{j=1}^{m_{i}}[Y_{ij}-\mu(t_{ij})]^2\\
    &=\arg \min_{\mu \in \mathcal{H}_{1}}\frac{1}{n}\sum_{i=1}^{n}\frac{1}{m_{i}}\sum_{j=1}^{m_{i}}[Y_{ij}-\langle \mu, k(t_{ij}, \cdot)\rangle_{\mathcal{H}_{1}}]^2 \\
    &=\arg\min_{\mu\in\mathcal{H}_1}\frac{1}{n}\sum_{i=1}^{n}\Vert W_{i}-S_{i}\mu\Vert^2_{\mathbb{R}^{nm_i}},
    \end{split}
\end{equation} 
where 
$S_{i}:\mathcal{H}_1\rightarrow \mathbb{R}^{m_{i}}$ is given as $S_{i}f = \{\langle f, k(t_{ij},\cdot)\rangle_{\mathcal{H}_{1}}/\sqrt{m_{i}}\}_{j\in[m_{i}]}$ and $W_{i} = \{Y_{ij}/\sqrt{m_{i}}\}_{j\in[m_{i}]}$. Therefore, it is easy to verify that $\hat{\mu}$ satisfies 
$$\hat{S}_{n}\hat{\mu}=V,$$ where $\hat{S}_{n}:\mathcal{H}_1\rightarrow\mathcal{H}_1$ is given as  
$$\hat{S}_{n} = \frac{1}{n}\sum^n_{i=1}S^*_iS_i=\frac{1}{n}\sum_{i=1}^{n}\frac{1}{m_{i}}\sum_{j=1}^{m_{i}}k(t_{ij}, \cdot) \otimes_{H_{1}}k(t_{ij}, \cdot)$$ 
and $$V=\frac{1}{n}\sum^n_{i=1}S^*_iW_i=\frac{1}{n}\sum^n_{i=1}\frac{1}{m_i}\sum^{m_i}_{j=1}Y_{ij}k(t_{ij},\cdot).$$   Since $\hat{S}_{n}:\mathcal{H}_1\rightarrow\mathcal{H}_1$ is not invertible, a spectral regularized estimator of $\mu_0$ is given by
\begin{equation}\label{mean_estimator}
    \hat{\mu}_\lambda=g_\lambda(\hat{S}_{n})V.
\end{equation}
 Here $g_{\lambda}:[0,a ] \to \mathbb{R},\, 0<\lambda \leq a$, is the regularization family satisfying the following conditions:
\begin{itemize}
    \item There exists a constant $a_{1}>0$ such that
    \begin{equation*}
    \label{regularization_property_1}
        \sup_{0<\sigma \leq a} |\sigma g_{\lambda}(\sigma )| \leq a_{1}.
    \end{equation*}
    \item There exists a constant $a_{2}>0$ such that
    \begin{equation*}
    \label{regularization_property_2}
        \sup_{0<\sigma \leq a} |g_{\lambda}(\sigma )| \leq \frac{a_{2}}{\lambda}.
    \end{equation*}
    \item There exists a constant $a_{3} >0 $ such that
    \begin{equation*}
        \label{regularization_property_3}
        \sup_{0<\sigma \leq a} |1- g_{\lambda}(\sigma )\sigma| = \sup_{0<\sigma \leq a} |r_{\lambda}(\sigma)| \leq a_{3}.
    \end{equation*}
    \item The maximal $p$ such that 
    \begin{equation*}
    \label{qualification_property}
    \sup_{0<\sigma \leq a} |1- g_{\lambda}(\sigma )\sigma|\sigma^{p} = \sup_{0<\sigma \leq a}|r_{\lambda}(\sigma)| \sigma^{p} \leq \omega_{p} \lambda^{p},
    \end{equation*}
    is called the qualification of the regularization family $g_{\lambda}$, where the constant $\omega_{p}$ does not depend on $\lambda$.
\end{itemize}
Examples of regularization families include Tikhonov ($g_\lambda(\sigma)=(\sigma+\lambda)^{-1}$), spectral cut-off ($g_\lambda(\sigma)=\sigma^{-1}\mathds{1}_{\sigma\ge \lambda}$), Showalter ($g_\lambda(\sigma)=\sigma^{-1}(1-e^{-\sigma/\lambda})\mathds{1}_{\sigma\ne 0}+\lambda^{-1}\mathds{1}_{\sigma=0}$), and Landweber iteration ($g_t(\sigma)=\sum^{t-1}_{i=1}(1-\sigma)^i$ where $\lambda$ is identified as $t^{-1}$, $t\in\mathbb{N}$), with qualification $1$ for Tikhonov and $\infty$ for the rest, where $\mathds{1}_{\Omega}(x)=1$ if $x\in \Omega$, and $0$, otherwise. We refer the reader to \cite{sergei2013,englmartinbook} for more details about the regularization method.\\

\noindent
\textbf{Practical representation of the estimator:} 
By the representer theorem \cite{wahba1971splinerepresenter}, the estimator $\hat{\mu}$ admits the expansion
$$\hat{\mu}(\cdot) 
= \sum_{i'=1}^{n}\sum_{j'=1}^{m_{i'}} \alpha_{i'j'}\, k(t_{i'j'}, \cdot).$$
Substituting this representation into \eqref{mean_esti_variation}, the coefficient vector $\alpha$ can be characterized via a system of linear equations involving the block kernel matrix.
Let $K$ be the block matrix given by
$$K =
\begin{pmatrix}
K_{11} &\,\, & K_{12} &\,\, & \cdots &\,\, & K_{1n} \\
K_{21} &\,\, & K_{22} &\,\, &\cdots &\,\, & K_{2n} \\
\vdots &\,\, & \vdots &\,\, & \ddots & \,\,&\vdots \\
K_{n1} &\,\, &K_{n2} &\,\, &\cdots &\,\, & K_{nn}
\end{pmatrix},$$
where each block $K_{ii'} \in \mathbb{R}^{m_i \times m_{i'}}$ is defined as
$$K_{ii'} = \big( k(t_{ij}, t_{i'\ell}) \big)_{1 \leq j \leq m_i,\; 1 \leq \ell \leq m_{i'}}.$$
Further, let $W$ be the block diagonal weight matrix,
$$W = \mathrm{diag}\!\left(
\frac{1}{n m_1} I_{m_1},\;
\frac{1}{n m_2} I_{m_2},\;
\ldots,\;
\frac{1}{n m_n} I_{m_n}
\right).$$
Let $\alpha = (\alpha_1^\top,\dots,\alpha_n^\top)^\top$ and $Y = (Y_1^\top,\dots,Y_n^\top)^\top$ denote the coefficient and response vectors, respectively.
Then, under a general spectral regularization scheme, the coefficient vector is given by
$$\alpha = g_\lambda(K W K)\, K W Y,$$
where $g_\lambda(\cdot)$ is a suitable filter function. Consequently, the regularized estimator is given by
$$\hat{\mu}_\lambda(t)
=
\sum_{i=1}^{n}\sum_{j=1}^{m_i}
\alpha_{ij}\, k(t_{ij}, t).$$
\noindent
The following result (proved in Proposition~\ref{subsec:prop}) provides an alternate representation of $\hat{\mu}_\lambda$, which we use in the convergence analysis of $\hat{\mu}_\lambda$ to $\mu_0$.
\begin{prop}\label{mean_alternate_estimator} For a bounded and continuous $k:T\times T\rightarrow \mathbb{R}$, we have    \begin{equation*}
        \hat{\mu}_\lambda=g_{\lambda}(\hat{S}_{n})V = \Lambda_{1}^{\frac{1}{2}}g_{\lambda}(\hat{A}_{n})V_{1},
    \end{equation*}
where $\hat{A}_{n}: L^2(T) \to L^2(T)$ is given as
$$\hat{A}_{n} = \frac{1}{n}\sum_{i=1}^{n}\frac{1}{m_{i}}\sum_{j=1}^{m_{i}}k^{\frac{1}{2}}(t_{ij}, \cdot)\otimes_{L^2(T)}k^{\frac{1}{2}}(t_{ij}, \cdot)$$ and $V_{1} = \frac{1}{n}\sum_{i=1}^{n}\frac{1}{m_{i}}\sum_{j=1}^{m_{i}} Y_{ij}k^{\frac{1}{2}}(t_{ij}, \cdot)$. Here $k^\frac{1}{2}(x,\cdot):=\sum_l \sqrt{\lambda_l}\psi_l(x)\psi_l(\cdot),\,x\in T$, with $(\lambda_l,\psi_l)_{l\in\mathbb{N}}$ being the eigenvalue-eigenvector pairs of $\Lambda_1$.
    \end{prop}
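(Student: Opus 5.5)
The identity is an intertwining argument built on a partial isometry linking $\mathcal{H}_1$ and $L^2(T)$. Let $\Lambda_1=\sum_l\lambda_l\psi_l\otimes\psi_l$ be the spectral decomposition; $\Lambda_1$ is compact, positive and trace class because $k$ is bounded and continuous on the compact set $T$. By Mercer's theorem, $\mathcal{H}_1=\mathcal{R}(\Lambda_1^{1/2})$, and $\Lambda_1^{1/2}$ restricted to $\overline{\mathcal{R}(\Lambda_1)}$ is an isometry onto $\mathcal{H}_1$. In other words, $U:=\Lambda_1^{1/2}:L^2(T)\to\mathcal{H}_1$ is a partial isometry with $UU^*=I_{\mathcal{H}_1}$.

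\textbf{Step 1: the kernel sections.} I would first show that $k^{1/2}(x,\cdot)\in L^2(T)$ for every $x$. This holds because $\|k^{1/2}(x,\cdot)\|_{L^2}^2=\sum_l\lambda_l\psi_l(x)^2=k(x,x)\le\kappa^2$, with pointwise convergence of the Mercer series. Next I would show that $U k^{1/2}(x,\cdot)=\sum_l\lambda_l\psi_l(x)\psi_l=k(x,\cdot)$. Here $\Lambda_1^{1/2}\psi_l$ is the continuous representative $\sqrt{\lambda_l}\psi_l\in\mathcal{H}_1$. Consequently $V=UV_1$ and $\hat{S}_n=U\hat{A}_nU^*$. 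The second identity follows from $U(h\otimes h)U^*=(Uh)\otimes_{\mathcal{H}_1}(Uh)$, using adjoints taken with respect to the two inner products.

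\textbf{Step 2: intertwining.} I would show that $\hat{A}_n=U^*U\hat{A}_nU^*U$, i.e., that $\hat{A}_n$ lives on $P:=U^*U$, the projection onto $\overline{\mathcal{R}(\Lambda_1)}$. This holds because each $k^{1/2}(x,\cdot)$ lies in the closed span of the $\psi_l$ with $\lambda_l>0$, so $Pk^{1/2}(x,\cdot)=k^{1/2}(x,\cdot)$. It then follows that $\hat{S}_nU=U\hat{A}_nU^*U=U\hat{A}_n$. By induction, $\hat{S}_n^rU=U\hat{A}_n^r$ for every polynomial degree $r$. Both operators are self-adjoint, positive and finite rank, and their spectra lie in $[0,\kappa^2]\subset[0,a]$.

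\textbf{Step 3: passing to $g_\lambda$.} Polynomial approximation alone is not enough here, because $g_\lambda$ may be discontinuous (spectral cut-off, for instance). So I would argue through the spectral theorem. Both operators have finite rank, and $U$ restricted to $\mathcal{R}(P)$ is unitary onto $\mathcal{H}_1$, so $\hat{S}_n$ and $P\hat{A}_nP=\hat{A}_n$ are unitarily equivalent via $U$. Hence $g_\lambda(\hat{S}_n)U=Ug_\lambda(\hat{A}_n)$ on $\mathcal{R}(P)$. On $\mathcal{R}(P)^\perp$, both sides act through $g_\lambda(0)$ times zero after applying $U$, because $U$ vanishes there and $\hat{A}_n$ vanishes there. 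Equivalently, on each eigenspace with eigenvalue $\sigma$, both sides multiply by $g_\lambda(\sigma)$. Applying this identity to $V_1$ gives
\[
g_\lambda(\hat{S}_n)V=g_\lambda(\hat{S}_n)UV_1=Ug_\lambda(\hat{A}_n)V_1=\Lambda_1^{1/2}g_\lambda(\hat{A}_n)V_1.
\]

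\textbf{Main obstacle.} I expect the delicate part to be Step 1 together with the identification of $\mathcal{H}_1$ with $\mathcal{R}(\Lambda_1^{1/2})$ as a partial isometry. This requires care with the identification of $L^2$ equivalence classes and continuous functions, and it requires the support of the design measure to be such that Mercer's theorem applies. Note that the paper writes $\Lambda_1$ with Lebesgue measure $ds$. I would invoke the standard Mercer/RKHS results, e.g.\ from \cite{paulsen2016rkhs}, and handle the possibility $\ker\Lambda_1\neq\{0\}$ through the projection $P$.
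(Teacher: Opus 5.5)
Your proposal is correct and follows essentially the same route as the paper. The paper's key step, that $(\sigma,f)\mapsto(\sigma,\Lambda_1^{1/2}f)$ carries eigenpairs of $\hat{A}_n$ to eigenpairs of $\hat{S}_n$, is exactly your intertwining $\hat{S}_n U = U\hat{A}_n$; the paper then expands $g_\lambda(\hat{S}_n)V$ in the transported eigenbasis and returns to $L^2(T)$ via $\langle k(t_{ij},\cdot),\Lambda_1^{1/2}f\rangle_{\mathcal{H}_1}=\langle k^{1/2}(t_{ij},\cdot),f\rangle_{L^2(T)}$, which is your $U^*k(x,\cdot)=k^{1/2}(x,\cdot)$. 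Your partial-isometry packaging makes explicit what the paper uses tacitly, namely that $\Lambda_1^{1/2}$ preserves orthonormality on $\overline{\mathcal{R}(\Lambda_1)}$, that the transported eigenvectors exhaust $\hat{S}_n$, and that $\ker\Lambda_1$ and $g_\lambda(0)$ play no role.
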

Before presenting the convergence rate of $\hat{\mu}_\lambda$, in the following, we collect the required assumptions, which are discussed in Remark~\ref{rem:different_regu}.
\begin{assumption}\label{mean_SRC}
    $\mu_{0} \in \mathcal{R}(\Lambda_{1}^{\alpha}),~ \alpha >0$, i.e., there exists an $h \in L^2(T)$ such that $\mu_{0} = \Lambda_{1}^{\alpha}h$.
\end{assumption}
\begin{assumption}\label{mean_moment}
    $\sup_{s \in T}\mathbb{E}[X^2(s)] < \infty$, $\mathbb{E}[X^4(s)] \leq C (\mathbb{E}[X^2(s)])^2$ for all $s \in T$ and 
    \begin{equation*}
        \mathbb{E}\left[\langle X,f \rangle^4_{L^2(T)}\right] \leq C_{1}\left(\mathbb{E}\left[\langle X,f \rangle^2_{L^2(T)}\right]\right)^2,~ \forall~ f \in L^2(T),
    \end{equation*}
    where $C$ and $C_1$ are some finite universal constants. 
\end{assumption}
\begin{assumption}\label{mean_eigenvalue_decay}
    There exists a constant $b >1$ such that
    \begin{equation*}
         i^{-b} \lesssim \lambda_{i} \lesssim i^{-b}~\forall ~ i \in \mathbb{N},
    \end{equation*}
    where $\{\lambda_i\}_{i \in \mathbb{N}}$ are the eigenvalues of $\Lambda_1$.
\end{assumption}

\noindent
For $\alpha \geq 0$, let us define the $\alpha$-power space as
\begin{equation*}
    [\mathcal{H}]^{\alpha} := \left\{\sum_{i}a_{i}\lambda_{i}^{\alpha}\psi_{i}~:~(a_{i})_{i \in \mathbb{N}} \in \ell^2(\mathbb{N})\right\} \subseteq L^2(T) 
\end{equation*}
equipped with the $\alpha$-power norm
\begin{equation*}
\left\|\sum_{i}a_{i}\lambda_{i}^{\alpha}\psi_{i}\right\|^2_{[\mathcal{H}]^{\alpha}} = \|(a_{i})_{i \in \mathbb{N}}\|^2_{\ell^2(\mathbb{N})} = \sum_{i \in \mathbb{N}}a_{i}^2.
\end{equation*}
Moreover, $\{\lambda_{i}^{\alpha}\psi_{i}\}_{i \in \mathbb{N}}$ forms an ONB for $[\mathcal{H}]^{\alpha}$ and consequently $[\mathcal{H}]^{\alpha}$ is a separable Hilbert space. 
\begin{assumption}\label{mean_embedding}
For $ 0 < \alpha \leq \frac{1}{2}$, there exists a constant $Z>0$ such that
$$\|[\mathcal{H}]^{\alpha} \hookrightarrow L_{\infty}(T)\|_{\emph{op}} \leq Z.$$
\end{assumption}
\begin{remark}
(i) Assumption~\ref{mean_SRC} deals with the smoothness of the unknown mean function, which is the standard source condition in RKHS methods \cite{devito}. While \cite{Cai_2011_mean} assumes that the mean function lies in the RKHS $(\alpha = \frac{1}{2})$, our analysis does not impose such a restriction on the choice of $\alpha$.
  \vspace{1mm}\\
(ii) Assumption~\ref{mean_moment} imposes uniform moment conditions on the underlying stochastic process $X$. The boundedness of $\sup_{s \in T}\mathbb{E}[X^2(s)]$ ensures that the point-wise variance of the process is uniformly controlled over the domain $T$, while the fourth-moment condition $\mathbb{E}[X^4(s)] \leq C (\mathbb{E}[X^2(s)])^2$ rules out excessively heavy-tailed behavior at each location. In addition, the moment bound on the random linear functionals $\langle X,f\rangle_{L^2(T)}$ guarantees that fourth moments of integrated projections of the process are uniformly controlled by their second moments for all $f \in L^2(T)$. All these conditions have been constantly used in the FDA literature \cite{cai2010nonparametric} and play a crucial role in the analysis to achieve optimal rates. \vspace{1mm}\\
(iii) Assumption \ref{mean_eigenvalue_decay} is also quite standard in the literature on RKHS-based algorithms \cite{devito}. It follows from Assumption \ref{mean_eigenvalue_decay} that 
\begin{equation*}\label{mean_effective_dimenstion}
    \begin{split}
        \mathcal{N}_{1}(\lambda) :=  \emph{trace}((\Lambda_{1}+\lambda I)^{-1}\Lambda_{1})
        = \sum_{i}\frac{\lambda_{i}}{\lambda_{i}+\lambda} = \sum_{i}\frac{1}{1+\frac{\lambda}{\lambda_{i}}}
        \lesssim  \sum_{i}\frac{i^{-b}}{i^{-b}+\lambda} \lesssim \lambda^{-\frac{1}{b}},
    \end{split}
\end{equation*}
where we used $\lambda_{i} \lesssim i^{-b}$ in the penultimate step.\vspace{1mm}\\
(iv) Assumption~\ref{mean_embedding} provides a continuous embedding of the interpolation space $[\mathcal{H}]^{\alpha}$ into $L_{\infty}(T)$, which has been used as a regularity condition in kernel-based nonparametric estimation \cite{Steinwart_2020_sobolev_norm}. Such embeddings hold for a broad class of commonly used kernels, including Sobolev, spline, Mat\'{e}rn, and Gaussian kernels on compact domains. \cite[Theorem~9]{Steinwart_2020_sobolev_norm} shows that for $\alpha > 0$,
$$\|[\mathcal{H}]^{\alpha} \hookrightarrow L_{\infty}(T)\|_{\emph{op}} = \|k^{\alpha}\|_{\infty},$$
where $k^{\alpha}(x,x') = \sum_{i}\lambda_{i}^{2 \alpha} \psi_{i}(x) \psi_{i}(x')$ is the reproducing kernel of the $\alpha$-power RKHS, $[\mathcal{H}]^{\alpha}$. Therefore, $\Vert k^\alpha\Vert_\infty=\sup_{x\in T}\sum_i\lambda^{2\alpha}_i\psi^2_i(x)$, which means Assumption~\ref{mean_embedding} implies that there exists a constant $Z$ such that $\sum_{i}\lambda_{i}^{2 \alpha}\psi_{i}^2(x) \leq Z^2$ for almost all $x \in T$. 
\vspace{1mm}\\
(v) Cai and Yuan \cite{Cai_2011_mean} require the sample paths of the underlying stochastic process to lie in a specified RKHS (a Sobolev space in their setting), which in turn forces the mean function to belong to the same RKHS. We relax this requirement by instead imposing a regularity condition directly on the mean function alongside the moment condition $\sup_{t \in T}\mathbb{E}[X^2(t)] < \infty$. When the kernel under consideration is a Mercer kernel, the assumptions of \cite{Cai_2011_mean} are strictly stronger than ours. To see this, consider the process $X(t) = t + W(t)$ for $t \in [0,1]$, where $W(t)$ is a standard Brownian motion, paired with the Sobolev RKHS $\mathcal{W}_{2}^{1}[0,1] = \{g : [0,1] \to \mathbb{R} \mid g \text{ is absolutely continuous and } g^{(1)} \in L^2[0,1]\}$. The sample paths of $X$ are almost surely nowhere differentiable and therefore do not belong to $\mathcal{W}_2^1[0,1]$, yet the process satisfies $\sup_{t \in [0,1]}\mathbb{E}[X^2(t)] < \infty$, demonstrating that our conditions are strictly weaker. Another example is $X(t) = t+ \sqrt{t}Z,~ t \in [0,1]$ where $Z \sim \mathcal{N}(0,1)$. It can be easily seen that $\mathbb{E}[X^2(t)] = \mathbb{E}[(t+ \sqrt{t}Z)^2] = \mathbb{E}[t^2+ t Z^2 + 2 t^{3/2}Z] = t^2 + t$, $\sup_{t \in [0,1]} \mathbb{E}[X^2(t)]< \infty$ and $m(t)=\mathbb{E}[X(t)]=t\in \mathcal{W}^1_2[0,1]$. But $X'(t) = 1+ \frac{Z}{2 \sqrt{t}} \not\in L^2[0,1]$ a.s. implies that its sample path does not lie in the RKHS $\mathcal{W}_{2}^{1}[0,1]$. Note that the examples listed above satisfy all the conditions stated in Assumption~\ref{mean_moment}.
\end{remark}
The following result (proved in Section~\ref{subsec:thm-mean-upper}) provides the convergence rate for the proposed mean function estimator.
\begin{theorem}\label{thm:mean}
    Suppose Assumptions~\ref{mean_SRC}, \ref{mean_moment} and \ref{mean_eigenvalue_decay} hold. Let $\nu \geq 1$ be the qualification of the regularization family and $\lambda = (mn)^{-\frac{b}{1+2\min\{\alpha,\nu\} b}}$. Then, for $\alpha \geq \frac{1}{2}$, we have 
    \begin{equation*}
        \|\hat{\mu}_{\lambda}-\mu_{0}\|_{L^2(T)} \lesssim_{p} \frac{1}{\sqrt{n}} + (mn)^{-\frac{r b}{1 + 2 r b}},~\qquad r = \min\{\alpha, \nu\}.
    \end{equation*}
Furthermore, if Assumption~\ref{mean_embedding} holds, then for $0 < \alpha \leq\frac{1}{2}$, we have
\begin{equation*}
        \|\hat{\mu}_{\lambda}-\mu_{0}\|_{L^2(T)} \lesssim_{p} \frac{1}{\sqrt{n}} + (mn)^{-\frac{\alpha b}{1 + 2 \alpha b}}. 
    \end{equation*}
    \end{theorem}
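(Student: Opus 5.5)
We work in $L^2(T)$ through the representation of Proposition~\ref{mean_alternate_estimator}, $\hat\mu_\lambda=\Lambda_1^{1/2}g_\lambda(\hat A_n)V_1$. Since $\Lambda_1^{1/2}k^{1/2}(t,\cdot)=k(t,\cdot)$, the population version of $\hat A_n$ is $A:=\mathbb{E}[k^{1/2}(t,\cdot)\otimes k^{1/2}(t,\cdot)]$. Taking the design distribution as the reference measure in $L^2(T)$, this is $A=\Lambda_1$. Write $\mu_0=\Lambda_1^{1/2}f_0$ with $f_0=\Lambda_1^{\alpha-1/2}h$; when $\alpha<1/2$ this is only formal, and we will interpret it in the power space $[\mathcal{H}]^{\alpha}$.

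We decompose
\[
\hat\mu_\lambda-\mu_0=\Lambda_1^{1/2}g_\lambda(\hat A_n)(V_1-\hat A_n f_0)-\Lambda_1^{1/2}r_\lambda(\hat A_n)f_0 .
\]
In the first term, $V_1-\hat A_nf_0=\frac1n\sum_i\frac1{m_i}\sum_j\big(X_i(t_{ij})-\mu_0(t_{ij})+\epsilon_{ij}\big)k^{1/2}(t_{ij},\cdot)$, which is a centered noise term. The second term is the bias. It is useful to split the noise further as $\xi_1+\xi_2$, where
\[
\xi_1=\frac1n\sum_i\int (X_i-\mu_0)(s)\,k^{1/2}(s,\cdot)\,ds=\Lambda_1^{1/2}\Big(\tfrac1n\textstyle\sum_i (X_i-\mu_0)\Big)
\]
is the part averaged over the design, and $\xi_2$ is the remainder, consisting of the design fluctuation plus the measurement error.

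\textbf{Concentration.} Standard operator Bernstein arguments with the effective dimension $\mathcal N_1(\lambda)\lesssim\lambda^{-1/b}$ give
\[
\|(\Lambda_1+\lambda I)^{-1/2}(\Lambda_1-\hat A_n)(\Lambda_1+\lambda I)^{-1/2}\|_{\rm op}\le \tfrac12
\]
with high probability whenever $\lambda\gtrsim (mn)^{-1}\log(mn)$ up to $\mathcal N_1$ factors. The sum has $\sum_i m_i$ terms weighted by $1/(nm_i)$, and with harmonic mean $m$ the relevant variance scales like $1/(mn)$. On this event,
\[
\|(\Lambda_1+\lambda)^{1/2}(\hat A_n+\lambda)^{-1/2}\|_{\rm op}\lesssim1 .
\]
Combined with $\|(\hat A_n+\lambda)^{1/2}g_\lambda(\hat A_n)(\hat A_n+\lambda)^{1/2}\|_{\rm op}\lesssim a_1+a_2$, this reduces the noise term to $\|\Lambda_1^{1/2}(\Lambda_1+\lambda)^{-1/2}\|\cdot\|(\Lambda_1+\lambda)^{-1/2}(\xi_1+\xi_2)\|$, up to a bounded factor.

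\textbf{Noise terms.} For $\xi_1$ we avoid the $\lambda$-reduction and bound $\Lambda_1^{1/2}g_\lambda(\hat A_n)\xi_1$ directly. Writing $\xi_1=\Lambda_1^{1/2}\bar X$ and using operator monotonicity to compare $\Lambda_1$ with $\hat A_n+\lambda$, this term is $\lesssim \|\bar X-\mu_0\|_{L^2}\lesssim_p n^{-1/2}$, using $\sup_s\mathbb{E}X^2(s)<\infty$. This is where the parametric $1/\sqrt n$ comes from. For $\xi_2$, a second-moment computation gives
\[
\mathbb{E}\|(\Lambda_1+\lambda)^{-1/2}\xi_2\|^2\lesssim \frac{(\sup_s\mathbb{E}X^2(s)+\sigma_0^2)\,\mathcal N_1(\lambda)}{mn}.
\]
The key point is that the within-curve cross terms $j\ne j'$ are absorbed into $\xi_1$ after centering in $t$, and the fourth-moment part of Assumption~\ref{mean_moment} controls the remaining cross variances. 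Chebyshev then gives $\sqrt{\lambda^{-1/b}/(mn)}$.

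\textbf{Bias.} For $\alpha\ge 1/2$ we write $\Lambda_1^{1/2}r_\lambda(\hat A_n)\Lambda_1^{\alpha-1/2}h$. We pass from $\Lambda_1$ to $\hat A_n$ using the concentration event and the Cordes inequality for exponents $\le 1$. For higher smoothness we use the usual splitting $\Lambda_1^{s}=\hat A_n^{s}+(\Lambda_1^s-\hat A_n^s)$ together with Lipschitz/Hölder bounds for operator powers, which is where qualification $\nu\ge1$ enters. The result is $\lesssim\lambda^{r}$ with $r=\min\{\alpha,\nu\}$.

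For $\alpha<1/2$ the function $f_0$ need not lie in $L^2$. We then introduce the population approximant $\mu_\lambda=\Lambda_1^{1/2}g_\lambda(\Lambda_1)\Lambda_1^{1/2}\mu_0$, whose bias is $\|r_\lambda(\Lambda_1)\Lambda_1^\alpha h\|\lesssim\lambda^\alpha$. It remains to bound $\hat\mu_\lambda-\mu_\lambda$. The difficulty is that $\mu_0(t_{ij})$ is no longer of the form $\langle f_0,k^{1/2}(t_{ij},\cdot)\rangle$ with $f_0\in L^2$. Assumption~\ref{mean_embedding} supplies the needed control $\|\mu_0-\mu_\lambda\|_\infty\lesssim Z\lambda^{\alpha-1/2}\cdot\lambda^{1/2}$-type bounds, and these make the Bernstein/variance step for $\frac1n\sum\frac1{m_i}\sum(\mu_0-\mu_\lambda)(t_{ij})k^{1/2}(t_{ij},\cdot)$ work, giving again $\sqrt{\mathcal N_1(\lambda)/(mn)}+\lambda^\alpha$.

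\textbf{Balancing.} The error is $n^{-1/2}+\lambda^{r}+\sqrt{\lambda^{-1/b}/(mn)}$. Choosing $\lambda=(mn)^{-b/(1+2rb)}$ equates the last two terms and yields the stated rate. We must also check that this $\lambda$ satisfies the concentration requirement, which holds since $b/(1+2rb)<1$.

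\textbf{Main obstacle.} We expect the main obstacle to be the $\alpha<1/2$ misspecified case, namely controlling the sup-norm of $\mu_0-\mu_\lambda$ through the embedding so that the Bernstein bound has the correct order. A secondary delicate point is the cleanly separated $n^{-1/2}$ term. Here the within-curve correlation must not inflate the $(mn)^{-1}$ variance, and this is handled by isolating $\xi_1$ as above.
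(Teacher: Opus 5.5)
There is a genuine gap in the misspecified case, and it lies in your concentration step rather than where you placed the main obstacle. For $0<\alpha<\frac12$, every reduction you make goes through the event $\|(\Lambda_1+\lambda I)^{-1/2}(\Lambda_1-\hat A_n)(\Lambda_1+\lambda I)^{-1/2}\|_{\text{op}}\le\frac12$. This covers the noise term, the $\xi_1$ term, and the residual bias $\Lambda_1^{1/2}r_\lambda(\hat A_n)f_\lambda$ around your approximant $\mu_\lambda$. You derive that event from operator Bernstein using only boundedness of the kernel, i.e.\ $\sup_t\|(\Lambda_1+\lambda I)^{-1/2}k^{1/2}(t,\cdot)\|^2_{L^2(T)}\le\kappa^2/\lambda$, which is why you need $\lambda\gtrsim\log(mn)/(mn)$. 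Your closing check that this holds ``since $b/(1+2rb)<1$'' is false when $r=\alpha\le\frac12-\frac1{2b}$. In that range $\lambda=(mn)^{-b/(1+2\alpha b)}\le(mn)^{-1}$, so the event is not available at the prescribed $\lambda$. As written, your misspecified argument therefore covers only $\frac12-\frac1{2b}<\alpha<\frac12$. The missing idea is to use Assumption~\ref{mean_embedding} in the concentration itself. Since $\sum_l\lambda_l^{2\alpha}\psi_l^2(t)\le Z^2$ and $x^{1-2\alpha}/(x+\lambda)\le\lambda^{-2\alpha}$ for $x>0$, you get, for a.e.\ $t\in T$,
\[
\|(\Lambda_1+\lambda I)^{-1/2}k^{1/2}(t,\cdot)\|^2_{L^2(T)}=\sum_l\frac{\lambda_l\psi_l^2(t)}{\lambda_l+\lambda}\le Z^2\lambda^{-2\alpha}.
\]
The requirement then becomes $\lambda^{-2\alpha}\log(mn)\lesssim mn$, which holds for $\lambda=(mn)^{-b/(1+2\alpha b)}$ because $2\alpha b<1+2\alpha b$. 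This is exactly the role of the embedding in the paper: the first bound of Lemma~\ref{mean_empirical_operator} feeds the second part of Lemma~\ref{mean_powers_constant_bound}. The remark after that lemma explains why the unembedded condition is incompatible with the optimal $\lambda$.

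There are two smaller corrections. First, the embedding gives $\|\mu_0-\mu_\lambda\|_\infty\le Z\|r_\lambda(\Lambda_1)h\|_{L^2(T)}\le Za_3\|h\|_{L^2(T)}$. This is an $O(1)$ bound with no power of $\lambda$, not $Z\lambda^{\alpha}$. It suffices for a second-moment (Chebyshev) bound of order $\mathcal N_1(\lambda)/(mn)$ on the design fluctuation of $\frac1n\sum_i\frac1{m_i}\sum_j(\mu_0-\mu_\lambda)(t_{ij})k^{1/2}(t_{ij},\cdot)$. A genuine Bernstein bound there would again need the display above. Second, with unequal $m_i$ the range term in Bernstein is governed by $\max_i 1/(nm_i)$, not by the harmonic mean. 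The paper avoids this by using only second moments (Chebyshev/Markov with Hilbert--Schmidt bounds), which see only $\frac1n\sum_i 1/m_i$.

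Apart from these points your route is sound and close to the paper's. For $\alpha\ge\frac12$ your decomposition is exactly the paper's. Isolating $\xi_1=\Lambda_1^{1/2}(\bar X-\mu_0)$ is a valid alternative to the single variance computation in Lemma~\ref{mean_empirical_estimation}. Conditionally on $X_i$ the remaining design fluctuations are independent in $j$, so only $\sup_t\mathbb{E}X^2(t)$ is needed there, not fourth moments. For $\alpha<\frac12$ your population approximant $\mu_\lambda$ replaces the paper's comparison with Tikhonov regularization (Terms 2a--2c). Once the concentration is repaired, it yields the same bound $\sqrt{\mathcal N_1(\lambda)/(mn)}+\lambda^\alpha$.
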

\begin{remark}\label{rem:saturate}
(i) Cai and Yuan \cite{Cai_2011_mean} restrict their analysis to the setting in which the mean function belongs to the RKHS, which, in their work, is a Sobolev space. The above result generalizes their framework in three directions: First, it allows working with any kernel and not necessarily a specific Sobolev kernel. Second, it establishes convergence rates under the source condition $\mu_0 \in \mathcal{R}(\Lambda^\alpha_1)$ for $\alpha > 0$, where the choice of $\alpha = \frac{1}{2}$ recovers the RKHS itself since $\mathcal{R}(\Lambda^{\frac{1}{2}}_1) = \mathcal{H}_1$. This source condition provides a natural and flexible way to characterize the regularity of the mean function that strictly encompasses the RKHS assumption of \cite{Cai_2011_mean} as a particular case. Third, the result of Cai and Yuan \cite{Cai_2011_mean} applies only to Tikhonov regularization, which, from Theorem~\ref{thm:mean} is clear that the corresponding rates saturate for $\alpha > 1$, and therefore are not minimax for $\alpha > 1 $ (see Theorem~\ref{thm:mean-lower} and Remark~\ref{rem:lower-mean}).\vspace{1mm}\\
(ii) Theorem~\ref{thm:mean} establishes a phase transition in the convergence rate: if $m\ge n^{\frac{1}{2b\min\{\alpha,\nu\}}}$, then the convergence rate is $n^{-1/2}$, which is parametric, else the rate is non-parametric given by $(mn)^{-\frac{-\min\{\alpha,\nu\} b}{1+2\min\{\alpha,\nu\} b}}$.
\end{remark}

The following result (proved in Section~\ref{subsec:thm-mean-lower}) establishes the minimax optimality of the upper rate in Theorem~\ref{thm:mean} by providing a matching lower rate.
\begin{theorem} \label{thm:mean-lower}
    Suppose Assumptions~\ref{mean_SRC} and \ref{mean_eigenvalue_decay} hold. Then for any $\alpha >0$, we have
\begin{equation*}
        \lim_{a \to 0} \lim_{n \to \infty} \inf_{\hat{\mu}} \sup_{\mu_{0} \in \mathcal{R}(\Lambda_{1}^{\alpha})} \mathbb{P}\left\{\|\hat{\mu}-\mu_{0}\|_{L^2(T)} \geq a\left( \frac{1}{\sqrt{n}}+  (mn)^{-\frac{\alpha b}{1+2 \alpha b}}\right)\right\} =1.
    \end{equation*}
\end{theorem}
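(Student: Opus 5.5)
Since the lower bound is a sum of two terms, it suffices to prove separately that each of $n^{-1/2}$ and $(mn)^{-\frac{\alpha b}{1+2\alpha b}}$ is a lower rate. Both are statements of the form ``$\inf_{\hat\mu}\sup\mathbb{P}\{\cdot\}\to 1$''. The maximum of the two is at least half their sum, so the two bounds combine into the displayed statement. For each term I will exhibit a finite, well-separated family of admissible distributions of $(X,\{t_{ij}\},\{\epsilon_{ij}\})$ with mean functions in $\mathcal{R}(\Lambda_1^\alpha)$. I will then apply Fano's method, in the form of Tsybakov's Theorem 2.5, which gives a probability tending to one as the number of hypotheses grows. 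Throughout I take $t_{ij}$ uniform on $T$ (normalized to Lebesgue measure one) and $\epsilon_{ij}\sim\mathcal{N}(0,\sigma_0^2)$.

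\emph{Parametric term.} I take $X(t)=\mu_\theta(t)+Z\,\psi_1(t)$ with $Z\sim\mathcal{N}(0,1)$ and $\mu_\theta=\theta\,\lambda_1^{\alpha}\psi_1$. Only two hypotheses are needed, $\theta\in\{0,\,c/\sqrt n\}$, both lying in $\mathcal{R}(\Lambda_1^\alpha)$. Even if the whole curves $X_i$ were observed without noise, the data would reduce to $n$ i.i.d.\ $\mathcal{N}(\theta,1)$ coefficients, whose KL divergence is $n\theta^2/2=c^2/2$. Since the actual observations are a randomized function of the curves, the data-processing inequality bounds their KL divergence by the same quantity. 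Le Cam's two-point bound then gives a positive probability of error of order $n^{-1/2}$. To get probability tending to one as $a\to0$, I instead use $M$ hypotheses $\theta_k=k c/\sqrt n$ (or a $\sqrt{n}$-scaled packing on a few coordinates) and let $c$ depend on $a$ so that Fano's bound tends to one. Alternatively, I use the standard reduction that the separation $2a n^{-1/2}$ shrinks with $a$ while the KL stays bounded, which makes the testing error approach $1$ as $a\to0$.

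\emph{Nonparametric term.} I take $X=\mu_\omega$ deterministic, so that all randomness comes from $t_{ij}$ and $\epsilon_{ij}$ and the problem becomes nonparametric regression with $N=\sum_i m_i$ samples. One subtlety is that $N$ is governed by the arithmetic mean, whereas $m$ is the harmonic mean. To avoid this I let the process carry an i.i.d.\ per-curve random perturbation, which makes the effective per-curve information $\asymp 1/(\sigma^2_X+\sigma_0^2/m_i)$. Simplest is to note the lower bound is needed only for some design in $\mathcal{M}(m)$, and choose $m_i\equiv m$ so $N=mn$. With $L\asymp(mn)^{\frac{1}{1+2\alpha b}}$, I set
\[
\mu_\omega=\epsilon\sum_{l=L+1}^{2L}\omega_l\lambda_l^{\alpha}\psi_l\cdot L^{-1/2},\qquad \omega\in\{0,1\}^L,
\]
and extract by Varshamov--Gilbert a subset of $2^{L/8}$ vectors with pairwise Hamming distance at least $L/8$. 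Each $\mu_\omega$ lies in $\mathcal{R}(\Lambda_1^\alpha)$ with $\|h\|\le\epsilon$. The pairwise separation is $\|\mu_\omega-\mu_{\omega'}\|^2_{L^2}\gtrsim \epsilon^2 L^{-2\alpha b}$, using $\lambda_l\asymp l^{-b}$ (Assumption~\ref{mean_eigenvalue_decay}). The KL divergence is $\frac{mn}{2\sigma_0^2}\|\mu_\omega-\mu_{\omega'}\|^2_{L^2}\lesssim mn\,\epsilon^2L^{-2\alpha b}\asymp \epsilon^2 L$, which is at most a small multiple of $\log(2^{L/8})$ once $\epsilon$ is small. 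Fano's inequality then yields the rate $L^{-\alpha b}=(mn)^{-\frac{\alpha b}{1+2\alpha b}}$. Taking $a\to0$ together with $L\to\infty$ (as $n\to\infty$) drives the error probability to one.

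\emph{Main obstacle.} The delicate step is obtaining probability tending to $1$ rather than merely bounded below, especially for the parametric term, where a fixed number of hypotheses only gives a constant error probability. I will handle this by letting the hypotheses' KL radius scale with $a$: for a fixed $a$, use $M_a\to\infty$ equally spaced means in a window of width $\asymp n^{-1/2}$ with spacing $2a n^{-1/2}$, so that the Fano error $1-(\text{KL}+\log2)/\log M_a\to1$ as $a\to0$. A secondary check is that the hypotheses satisfy whatever moment conditions are implicit in the supremum, which Gaussian processes do. For the nonparametric term, the KL identity needs to hold for the non-Gaussian $t$-marginal; it does, because the design distribution is identical across hypotheses, so the KL divergence reduces to the expected Gaussian regression KL.
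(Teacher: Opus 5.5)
Your proof is correct. The nonparametric half is essentially the paper's argument; the treatment of the $n^{-1/2}$ term is a genuine difference, and here your version is more complete than the paper's.

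\textbf{Nonparametric term.} The paper also fixes $m_i\equiv m$ and applies Varshamov--Gilbert to the coefficients of $\psi_{M+1},\dots,\psi_{2M}$ with $M\asymp (mn)^{\frac{1}{1+2\alpha b}}$. It bounds the KL divergence by $nm\|\mu_\theta-\mu_{\theta'}\|^2_{L^2(T)}\lesssim M$ and concludes with Tsybakov's Theorem~2.5. The differences are cosmetic.
\begin{itemize}
\item You tune the amplitude $\epsilon\asymp a$, whereas the paper tunes the constant $c_0$ in $M$.
\item You take $X$ deterministic, whereas the paper uses $X_i=\mu+Z_i$ with a Gaussian process $Z_i$. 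Either choice works: conditionally on the design, the covariance of $(Y_{i1},\dots,Y_{im})$ dominates $\sigma_0^2 I$, so the same KL bound holds. If the law of $X-\mu_0$ is meant to be fixed, you can use $X=\mu+Z\psi_1$ in both halves.
\end{itemize}

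\textbf{Parametric term.} The paper disposes of $n^{-1/2}$ by noting that $(mn)^{-\frac{\alpha b}{1+2\alpha b}}\ge n^{-1/2}$ when $m=1$. That observation does not transfer to larger $m$, because more observations per curve can only lower the minimax risk.
\begin{itemize}
\item In the regime $m\gtrsim n^{\frac{1}{2\alpha b}}$, where $n^{-1/2}$ dominates, the packing only separates hypotheses by order $(mn)^{-\frac{\alpha b}{1+2\alpha b}}\ll n^{-1/2}$.
\item The variance of $Z_i$ is what actually creates the parametric barrier, yet in the paper's KL computation it only appears as a denominator that is bounded away.
\end{itemize}
Your argument supplies what is missing. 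Data processing from fully observed curves reduces the problem to $n$ i.i.d.\ Gaussian scores. You then take $M_a\to\infty$ equally spaced hypotheses in a window of width $\asymp n^{-1/2}$, so the KL stays bounded while $\log M_a\to\infty$. This is what proves the parametric part of the bound.

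\textbf{Two small corrections.}
\begin{itemize}
\item Discard the ``alternatively'' two-point variant. No lower bound built from two hypotheses can exceed $1/2$, since an estimator that outputs one of them at random has error probability exactly $1/2$ under each. The growing number of hypotheses is essential to reach the limit $1$.
\item With $\mu_\theta=\theta\lambda_1^\alpha\psi_1$, the scores are $\mathcal{N}(\theta\lambda_1^\alpha,1)$ rather than $\mathcal{N}(\theta,1)$. This only changes constants.
\end{itemize}
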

\begin{remark}\label{rem:lower-mean}
Since the rates obtained in Theorem~\ref{thm:mean} exactly coincide with the lower bounds established in Theorem~\ref{thm:mean-lower} for only $\alpha\le\nu$, the convergence rates for mean estimation derived in this work are minimax optimal for $\alpha\le \nu$. This means the mean estimators based on regularization schemes with infinite qualification, i.e.,   $\nu=\infty$, are minimax optimal for any smoothness $\alpha$, unlike the Tikhonov regularization ($\nu=1$) used in Cai and Yuan \cite{Cai_2011_mean}. 
\end{remark}

\section{Covariance Estimation}\label{sec:cov}
In this section, we provide an estimator for the covariance function using the spectral regularization scheme. Unlike in previous works, we do not assume that the mean function is known or that it is zero.

Let $\mathcal{H}_{2}$ be an RKHS with a continuous and bounded reproducing kernel
$K: (T \times T) \times (T \times T) \to \mathbb{R}$. Let $J_{2}: \mathcal{H}_{2} \to L^2(T \times T)$ be the inclusion map and we define $\Lambda_{2}:= J_{2}J_{2}^*: L^2(T\times T) \to L^2(T\times T)$ given as
\begin{equation*}
    \Lambda_{2}F(\cdot, \cdot) = \int_{T \times T} K((s,t),(\cdot,\cdot)) F(s,t)\, ds dt,\,\,F \in L^2(T \times T).
\end{equation*}

\noindent
As we can see from the given model for all $i \in [n],~j,k \in [m_{i}]$,
\begin{equation*}
\begin{split}
    \mathbb{E}[(Y_{ij}-\mathbb{E}[Y_{ij}])(Y_{ik}-\mathbb{E}[Y_{ik}])] & =  \mathbb{E}[(X_{i}(t_{ij})-\mu_{0}(t_{ij})+\epsilon_{ij})(X_{i}(t_{ik})-\mu_{0}(t_{ik})+\epsilon_{ik})]\\
    & =  \mathbb{E}[(X_{i}(t_{ij})-\mu_{0}(t_{ij}))(X_{i}(t_{ik})-\mu_{0}(t_{ik}))] + \mathbb{E}[\epsilon_{ij}\epsilon_{ik}]\\
    & =  C_{0}(t_{ij},t_{ik}) + \sigma_{0}^2 \delta_{jk}.
\end{split}
\end{equation*}
Motivated by this, an RKHS based estimator of $C_{0}$ can be given as
\begin{equation*}\label{covariance_variational}
    \begin{split}
        \hat{C}_{\eta} := \arg\min_{C \in \mathcal{H}_{2}}\left[\frac{1}{n}\sum_{i=1}^{n}\frac{1}{m_{i}(m_{i}-1)}\sum_{1\leq j\neq k\leq m_{i}}[(Y_{ij}-\hat{\mu}_{\eta}(t_{ij}))(Y_{ik}-\hat{\mu}_{\eta}(t_{ik}))-C(t_{ij},t_{ik})]^2\right], 
    \end{split}
\end{equation*}
where $\hat{\mu}_{\eta}$ is the estimator of the mean function $\mu_{0}$ as given in $(\ref{mean_estimator})$ with $\eta$ being the regularization parameter. We omit $j=k$ terms to avoid the variance, $\sigma^2_0$. This estimator coincides with that studied by Cai and Yuan \cite{cai2010nonparametric}, except that they focus on the simplified setting of all $m_i$'s being the same (i.e., $m_1=\ldots=m_n=m$) with a zero mean function and employ a norm-based penalty corresponding to Tikhonov regularization, whereas we allow for a non-zero mean and consider a more general regularization framework.\\

\noindent
Let us define $B_{i}: \mathcal{H}_{2} \to \mathbb{R}^{m_{i}(m_{i}-1)}$,
$$F \mapsto \{\langle F, K((t_{ij},t_{ik}),(\cdot, \cdot))\rangle_{\mathcal{H}_{2}}/\sqrt{m_{i}(m_{i}-1)}\}_{j\neq k\in[m_{i}] }.$$ Then we can see that

\begin{equation*}
    \hat{C}_{\eta} = \arg \min_{C \in \mathcal{H}_{2}}\left[\frac{1}{n}\sum_{i=1}^{n}\|W_{i}- B_{i}C\|_{\mathbb{R}^{m_{i}(m_{i}-1)}}^2\right] ,
\end{equation*}
where $W_{i} = \{(Y_{ij}-\hat{\mu}_{\eta}(t_{ij}))(Y_{ik}-\hat{\mu}_{\eta}(t_{ik}))/\sqrt{m_{i}(m_{i}-1)}\}_{j\neq k \in [m_{i}]}$. 
It is easy to see that $\hat{C}_{\eta}$ satisfies $\hat{B}_{n}\hat{C}_{\eta} = W$, where 
\begin{equation*}
    \hat{B}_{n}:= \frac{1}{n}\sum_{i=1}^{n}\frac{1}{m_{i}(m_{i}-1)}\sum_{1\leq j \neq k \leq m_{i}}K((t_{ij},t_{ik}),(\cdot, \cdot)) \otimes_{H_{2}} K((t_{ij},t_{ik}),(\cdot, \cdot))
\end{equation*}
and $W = \frac{1}{n}\sum_{i=1}^{n}\frac{1}{m_{i}(m_{i}-1)}\sum_{1\leq j \neq k \leq m_{i}}(Y_{ij}-\hat{\mu}_{\eta}(t_{ij}))(Y_{ik}-\hat{\mu}_{\eta}(t_{ik}))K((t_{ij},t_{ik}),(\cdot, \cdot))$.\\

\noindent
Since $\hat{B}_{n}:\mathcal{H}_2\rightarrow\mathcal{H}_2$ is not invertible, we propose a regularized estimator of $C_{0}$,  given as
\begin{equation}\label{eq:cov}
    \hat{C}_{\eta,\lambda} = g_{\lambda}(\hat{B}_{n})W.
\end{equation}

Note that $\hat{C}_{\eta,\lambda}$ involves two regularization parameters, with $\eta$ corresponding to the mean function estimator and $\lambda$ corresponding to the regularization scheme used in \eqref{eq:cov}. Similar to Proposition~\ref{mean_alternate_estimator} as in the mean estimation case, we provide an alternative form of the covariance estimator $\hat{C}_{\eta,\lambda}$ which we will use for our convergence analysis.

\begin{prop}\label{covariance_alternate_estimator} For a bounded and continuous $K:(T\times T)\times (T\times T)\rightarrow \mathbb{R}$, we have    \begin{equation*}
        \hat{C}_{\eta,\lambda}=g_{\lambda}(\hat{B}_{n})W = \Lambda_{2}^{\frac{1}{2}}g_{\lambda}(T_{n})O_{1},
    \end{equation*}
where $T_{n}: L^2(T \times T) \to L^2(T \times T)$ is given as
$$T_{n} = \frac{1}{n}\sum_{i=1}^{n}\frac{1}{m_{i}(m_{i}-1)}\sum_{1 \leq j \neq k\leq m_{i}} K^{\frac{1}{2}}((t_{ij},t_{ik}),(\cdot, \cdot))\otimes_{L^2}K^{\frac{1}{2}}((t_{ij},t_{ik}),(\cdot, \cdot))$$
and $O_{1} = \frac{1}{n}\sum_{i=1}^{n}\frac{1}{m_{i}(m_{i}-1)}\sum_{1 \leq j \neq k\leq m_{i}}(Y_{ij}-\hat{\mu}_{\eta}(t_{ij}))(Y_{ik}-\hat{\mu}_{\eta}(t_{ik}))K^{\frac{1}{2}}((t_{ij},t_{ik}),(\cdot, \cdot))$.\\

\noindent
Here
$$K^{\frac{1}{2}}((s,t),(\cdot,\cdot)) := \sum_{\beta}\sqrt{\xi_{\beta}}\Psi_{\beta}(s,t)\Psi_{\beta}(\cdot,\cdot),\,\,(s,t)\in T \times T,$$
and $\{(\xi_{\beta}, \Psi_{\beta})\}_{\beta \in \mathbb{N}}$ are the eigenvalue-eigenvector pairs of $\Lambda_{2}$.
\end{prop}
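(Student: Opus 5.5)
The proof is the same argument as for Proposition~\ref{mean_alternate_estimator}, carried out on $T\times T$ with $\Lambda_2$ and $K$ in place of $\Lambda_1$ and $k$. The basic tool is the factorization of the inclusion $J_2$ through $\Lambda_2^{1/2}$. First, by Mercer's theorem for the bounded continuous kernel $K$ on the compact set $T\times T$, write $K((s,t),(\cdot,\cdot))=\sum_\beta \xi_\beta\Psi_\beta(s,t)\Psi_\beta(\cdot,\cdot)$. By the spectral theorem, $\Lambda_2^{1/2}K^{1/2}((s,t),(\cdot,\cdot)) = K((s,t),(\cdot,\cdot))$, where the series defining $K^{1/2}((s,t),\cdot)$ converges in $L^2(T\times T)$ because $\sum_\beta \xi_\beta\Psi_\beta(s,t)^2=K((s,t),(s,t))\le \kappa^2$. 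Next, use the polar decomposition $J_2 = \Lambda_2^{1/2}U^*$, or equivalently $J_2^*=U\Lambda_2^{1/2}$, where $U:\overline{\mathcal{R}(\Lambda_2)}\to\mathcal{H}_2$ is a partial isometry with $U^*U$ the projection onto $\overline{\mathcal{R}(\Lambda_2)}$. The standard identification $\mathcal{H}_2=\mathcal{R}(\Lambda_2^{1/2})$, with $\|\Lambda_2^{1/2}F\|_{\mathcal{H}_2}=\|F\|_{L^2}$ for $F\perp\ker\Lambda_2$, gives $U F=\Lambda_2^{1/2}F$ viewed as an element of $\mathcal{H}_2$. Hence $U K^{1/2}((s,t),\cdot)=K((s,t),\cdot)$ as elements of $\mathcal{H}_2$.

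Second, I transport the operators. Writing $x_{ijk}=(t_{ij},t_{ik})$, for $f\in\mathcal{H}_2$ we have
\[
\langle f, K(x_{ijk},\cdot)\rangle_{\mathcal{H}_2}=\langle f, U K^{1/2}(x_{ijk},\cdot)\rangle_{\mathcal{H}_2}=\langle U^*f, K^{1/2}(x_{ijk},\cdot)\rangle_{L^2}.
\]
Therefore $K(x,\cdot)\otimes_{\mathcal{H}_2}K(x,\cdot)=U\,[K^{1/2}(x,\cdot)\otimes_{L^2}K^{1/2}(x,\cdot)]\,U^*$. Summing with the weights $1/(nm_i(m_i-1))$ gives $\hat B_n=UT_nU^*$. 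In the same way, $W=UO_1$, since each summand of $W$ is the scalar $(Y_{ij}-\hat\mu_\eta(t_{ij}))(Y_{ik}-\hat\mu_\eta(t_{ik}))$ times $K(x_{ijk},\cdot)=UK^{1/2}(x_{ijk},\cdot)$. The scalar weights are identical in both expressions, and it does not matter that $\hat\mu_\eta$ is itself data-dependent.

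Third, I pass $g_\lambda$ through the conjugation. The functions $K^{1/2}(x,\cdot)$ lie in $\overline{\mathcal{R}(\Lambda_2)}$, so $T_n$ maps into that space and vanishes on its orthogonal complement, and $U$ is an isometry there. Hence $(UT_nU^*)^j=UT_n^jU^*$ for $j\ge1$, and polynomial approximation together with continuity of the functional calculus gives $g_\lambda(UT_nU^*)UO_1=Ug_\lambda(T_n)O_1$. This identity holds even if $g_\lambda(0)\neq0$, because $O_1\in\overline{\mathcal{R}(\Lambda_2)}$. To avoid an approximation argument, one can instead note that $\hat B_n$ and $T_n$ are finite rank, diagonalize them simultaneously, and check that an eigenpair $(\sigma,v)$ of $T_n$ on $\overline{\mathcal{R}(\Lambda_2)}$ maps to the eigenpair $(\sigma,Uv)$ of $\hat B_n$. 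Finally $Ug_\lambda(T_n)O_1=\Lambda_2^{1/2}g_\lambda(T_n)O_1$, as elements of $\mathcal{H}_2\subset L^2$, which is the claim.

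The main obstacle is making the identification $U=\Lambda_2^{1/2}$ on $\overline{\mathcal{R}(\Lambda_2)}$ rigorous, together with the pointwise meaning of $K^{1/2}(x,\cdot)$ for every $x$, not just almost every $x$. These are exactly the points settled in Proposition~\ref{mean_alternate_estimator} for $k$. I would reuse that argument verbatim, relying on continuity of $K$ and Mercer's uniform convergence on $T\times T$.
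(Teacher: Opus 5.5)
Your proposal is correct and follows essentially the paper's route; the paper omits this proof and defers to the proof of Proposition~\ref{mean_alternate_estimator}, where your partial isometry $U$ is exactly the paper's map $F\mapsto\Lambda_2^{\frac{1}{2}}F$ regarded as an element of $\mathcal{H}_2$, and your intertwining $\hat{B}_nU=UT_n$ is the paper's eigenpair correspondence, stated at the operator level with the isometry, kernel-component and $g_\lambda(0)$ bookkeeping made explicit. The one fix: uniform polynomial approximation needs $g_\lambda$ to be continuous, which fails for spectral cut-off, so use your finite-rank diagonalization (equivalently, interpolate $g_\lambda$ on the finite set $\sigma(\hat{B}_n)\cup\sigma(T_n)$) instead.
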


\noindent
We omit the proof of Proposition~\ref{covariance_alternate_estimator} as it can be given in similar lines to Proposition~\ref{mean_alternate_estimator}. 

\begin{remark}
Replacing $g_{\lambda}(\hat{B}_{n})W$ with 
$\Lambda_{2}^{\frac{1}{2}}g_{\lambda}(T_{n})O_{1}$ as an expression for $\hat{C}_{\eta, \lambda}$ simplifies the analysis, since it allows us to work with the empirical operator
$T_{n} : L^2(T \times T) \to L^2(T \times T)$ rather than $\hat{B}_{n}: \mathcal{H}_{2}\to \mathcal{H}_{2}$. In particular, $T_{n}$ acts on the same space as the population operator $\Lambda_{2}$, which facilitates direct comparison between the empirical and population operators.
\end{remark}

\noindent
Define 
\begin{equation*}
    \begin{split}
        C^{\text{true}} := \arg\min_{C \in \mathcal{H}_{2}}\left[\frac{1}{n}\sum_{i=1}^{n}\frac{1}{m_{i}(m_{i}-1)}\sum_{1\leq j\neq k\leq m_{i}}[(Y_{ij}-\mu_{0}(t_{ij}))(Y_{ik}-\mu_{0}(t_{ik}))-C(t_{ij},t_{ik})]^2\right].
    \end{split}
\end{equation*}
Similar to $\hat{C}_{\eta,\lambda}$, we can see that $C^{\text{true}}_{\lambda} = \Lambda_{2}^{\frac{1}{2}}g_{\lambda}(T_{n})O_{2}$, where $$O_{2} = \frac{1}{n}\sum_{i=1}^{n}\frac{1}{m_{i}(m_{i}-1)}\sum_{1 \leq j \neq k\leq m_{i}}(Y_{ij}-\mu_{0}(t_{ij}))(Y_{ik}-\mu_{0}(t_{ik}))K^{\frac{1}{2}}((t_{ij},t_{ik}),(\cdot, \cdot)).$$ 

\begin{remark}
  Note that the construction of $\hat{C}_{\eta, \lambda}$ relies on an estimator of the mean function rather than on the true mean function as in $C^{\emph{true}}_\lambda$. By introducing the intermediate quantity $C^{\emph{true}}_{\lambda}$, we can decouple the analysis, allowing the discrepancy between $\hat{\mu}_{\eta}$ and $\mu_{0}$ to be treated separately in the covariance estimation error.
\end{remark}

Next, we list equivalent versions of Assumptions~\ref{mean_SRC},~\ref{mean_eigenvalue_decay} and \ref{mean_embedding} in the covariance setup.

\begin{assumption}\label{covariance_src}
    For some $\alpha_{1} > 0$, $C_{0} \in \mathcal{R}(\Lambda_{2}^{\alpha_{1}})$, i.e., there exist an $H \in L^2(T \times T)$ such that $C_{0} = \Lambda_{2}^{\alpha_{1}}H$.
\end{assumption}

\begin{assumption}\label{covariance_eigen_decay}
There exists a constant $b_{1}>1$, such that
\begin{equation*}
    i^{-b_{1}} \lesssim \xi_{i} \lesssim i^{-b_{1}},~~~\forall~~i \in \mathbb{N}.
\end{equation*}
\end{assumption}

\noindent
For $\alpha_{1} \geq 0$, we define
\begin{equation*}
    [\mathcal{H}_{2}]^{\alpha_{1}}:= \left\{\sum_{i}a_{i}\xi_{i}^{\alpha_{1}}\Psi_{i}:~(a_{i})_{i \in \mathbb{N}} \in \ell^2(\mathbb{N})\right\} \subseteq L^2(T \times T)
\end{equation*}
equipped with $\alpha_{1}$-power norm, 

\begin{equation*}
\left\|\sum_{i}a_{i}\xi_{i}^{\alpha_{1}}\Psi_{i}\right\|^2_{[\mathcal{H}_{2}]^{\alpha_{1}}} = \|(a_{i})_{i \in \mathbb{N}}\|^2_{\ell^2(\mathbb{N})} = \sum_{i \in \mathbb{N}}a_{i}^2.
\end{equation*}

Moreover, $(\xi_{i}^{\alpha_{1}}\Psi_{i})_{i \in \mathbb{N}}$ forms an ONB for $[\mathcal{H}_{2}]^{\alpha_{1}}$ and therefore, $[\mathcal{H}_{2}]^{\alpha_{1}}$ is a separable Hilbert space.

\begin{assumption}\label{covariance_embedding}
    For $0 < \alpha_{1} \leq \frac{1}{2}$, there is a constant $Z_{1} >0$ such that 
    \begin{equation*}
        \|[\mathcal{H}_{2}]^{\alpha_{1}} \hookrightarrow L_{\infty}(T \times T)\|_{\emph{op}} \leq Z_{1}.
    \end{equation*}
\end{assumption}

The following result (proved in Section~\ref{subsec:thm-cov-upper}) provides the convergence rate for $\hat{C}_{\eta,\lambda}$.
\begin{theorem}\label{thm:covariance}
    Suppose Assumptions \ref{mean_moment},~\ref{covariance_src}, and \ref{covariance_eigen_decay} hold. Let $\nu \geq 1$ be the qualification of the regularization family and $\lambda = (mn)^{-\frac{b_{1}}{1+2 \min\{\alpha_1,\nu\} b_{1}}}$. Then for $ \alpha_{1} \geq \frac{1}{2}$, we have 
    \begin{equation*}
        \|\hat{C}_{\eta, \lambda}-C_{0}\|_{L^2(T \times T)} \lesssim_{p} \|\mu_{0}-\mu_{\eta}\|^2_{L^2(T)}+\frac{1}{\sqrt{n}} + (mn)^{-\frac{r_{1} b_{1}}{1+ 2 \alpha_{1} b_{1}}},~\qquad r_{1} = \min\{\alpha_{1},\nu\}.
    \end{equation*}

\noindent
Further, if Assumption~\ref{covariance_embedding} holds, then for $0 < \alpha_{1} \leq \frac{1}{2}$, we have
\begin{equation*}
        \|\hat{C}_{\eta,\lambda}-C_{0}\|_{L^2(T \times T)} \lesssim_{p} \|\mu_{0}-\mu_{\eta}\|^2_{L^2(T)}+ \frac{1}{\sqrt{n}} + (mn)^{-\frac{\alpha_{1} b_{1}}{1+ 2 \alpha_{1} b_{1}}}.
    \end{equation*}
\end{theorem}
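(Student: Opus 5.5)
We split the error through the intermediate estimator $C^{\text{true}}_\lambda=\Lambda_2^{1/2}g_\lambda(T_n)O_2$:
\[
\|\hat C_{\eta,\lambda}-C_0\|_{L^2(T\times T)}\le \|\Lambda_2^{1/2}g_\lambda(T_n)(O_1-O_2)\|_{L^2}+\|C^{\text{true}}_\lambda-C_0\|_{L^2}.
\]
By Proposition~\ref{covariance_alternate_estimator} both are expressed through $T_n$ on $L^2(T\times T)$. The second term is then handled exactly as in the proof of Theorem~\ref{thm:mean}, with $(\Lambda_1,\hat A_n,V_1,\alpha,b)$ replaced by $(\Lambda_2,T_n,O_2,\alpha_1,b_1)$. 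The first term isolates the effect of estimating the mean.

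For the second term, write $\Lambda_2 = \mathbb{E}[T_n]$. Since $j\neq k$, we have $\mathbb{E}[O_2]=\Lambda_2 C_0$ after integrating out $t_{ij},t_{ik}$ and the noise. Decompose
\[
C^{\text{true}}_\lambda-C_0=\Lambda_2^{1/2}g_\lambda(T_n)(O_2-T_n\Lambda_2^{-1/2}C_0)-\Lambda_2^{1/2}r_\lambda(T_n)\Lambda_2^{-1/2}C_0 .
\]
Here $\Lambda_2^{-1/2}C_0=\Lambda_2^{\alpha_1-1/2}H$ makes sense when $\alpha_1\ge 1/2$; for $\alpha_1<1/2$ we instead work with $C_0$ directly, using the embedding Assumption~\ref{covariance_embedding} to bound $\sup|C_0|$. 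The plan is as follows.

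First, show that $\|(\Lambda_2+\lambda)^{-1/2}(T_n-\Lambda_2)(\Lambda_2+\lambda)^{-1/2}\|_{\rm op}\le 1/2$ with high probability. This uses a Bernstein inequality for the $n$ independent blocks, with effective dimension $\mathcal N_2(\lambda)\lesssim\lambda^{-1/b_1}$, so that $\|(\Lambda_2+\lambda)^{1/2}(T_n+\lambda)^{-1/2}\|_{\rm op}\lesssim1$.

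Second, bound the bias by $\lambda^{\min\{\alpha_1,\nu\}}$ using the qualification and the Cordes inequality.

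Third, bound the variance by a second moment computation of $\|(\Lambda_2+\lambda)^{-1/2}(O_2-\mathbb{E}O_2)\|_{L^2}$. It splits into a between-curve part of order $1/n$, which needs the fourth-moment conditions of Assumption~\ref{mean_moment} applied to $(X(s)-\mu_0(s))(X(t)-\mu_0(t))$, and a within-curve part of order $\mathcal N_2(\lambda)/(nm)$ from the harmonic-mean weighting; the $m_i(m_i-1)$ normalization is comparable to $m_i^2$. With the stated $\lambda$, balancing $\lambda^{2r_1}$ against $\lambda^{-1/b_1}/(mn)$ gives the $(mn)$ term, and the $1/n$ part gives $n^{-1/2}$.

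For the first term, expand
\[
(Y_{ij}-\hat\mu_\eta(t_{ij}))(Y_{ik}-\hat\mu_\eta(t_{ik}))-(Y_{ij}-\mu_0(t_{ij}))(Y_{ik}-\mu_0(t_{ik}))
\]
into cross terms $\delta(t_{ij})(Y_{ik}-\mu_0(t_{ik}))$ plus its symmetric counterpart, and a quadratic term $\delta(t_{ij})\delta(t_{ik})$, where $\delta=\mu_0-\hat\mu_\eta$. For the quadratic term, the population version of $\Lambda_2^{1/2}g_\lambda(T_n)$ applied to it is roughly the regularized projection of $\delta\otimes\delta$, whose $L^2(T\times T)$ norm is $\|\delta\|^2_{L^2(T)}$. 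The fluctuation around it is controlled with the same concentration as above. For the cross terms, $Y_{ik}-\mu_0(t_{ik})$ is mean zero and independent across curves. If we condition on $\delta$, or use sample splitting or a uniform bound over a ball containing $\hat\mu_\eta$ with high probability, their contribution is of order $\|\delta\|\,(n^{-1/2}+\text{variance rate})$, which is absorbed by the other terms.

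\textbf{Main obstacle.} The cross terms are the main difficulty, because $\hat\mu_\eta$ depends on the same data. I would first try a uniform bound of the form $\sup_{\|\delta\|_{[\mathcal H]^{\gamma}}\le R}$ over the empirical process, exploiting that $\hat\mu_\eta$ lies in a bounded power-space ball with high probability by Theorem~\ref{thm:mean}. If that fails, I would fall back on the crude bound $2ab\le a^2+b^2$, which costs at most $\|\delta\|^2$ plus the existing variance terms and matches the stated bound.
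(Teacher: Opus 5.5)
Your architecture matches the paper's. You use the intermediate $C^{\mathrm{true}}_\lambda$ and the split $O_1-O_2=V_1+V_2+V_3$. For $\alpha_1\ge\frac12$ you split $C^{\mathrm{true}}_\lambda-C_0$ into $\Lambda_2^{1/2}g_\lambda(T_n)(O_2-T_n\Lambda_2^{\alpha_1-1/2}H)$ and $-\Lambda_2^{1/2}r_\lambda(T_n)\Lambda_2^{\alpha_1-1/2}H$. (Minor: $\mathbb{E}[O_2]=\Lambda_2^{1/2}C_0$, not $\Lambda_2C_0$, though your decomposition is right.) Two steps, however, would not go through as you describe them.

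\textbf{The misspecified case $0<\alpha_1\le\frac12$.} Here you use Assumption~\ref{covariance_embedding} for the wrong purpose. Boundedness of $C_0$ already follows from Assumption~\ref{mean_moment}, since $|C_0(s,t)|\le\sup_t\mathbb{E}[X^2(t)]$. The embedding is needed for the operator concentration.

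Let $F_\lambda(s,t):=\sum_\beta\frac{\xi_\beta}{\xi_\beta+\lambda}\Psi_\beta^2(s,t)$. Both a Bernstein inequality and the paper's second-moment bound depend on $F_\lambda$ pointwise (through $\sup F_\lambda$, respectively $\int F_\lambda^2\le\sup F_\lambda\cdot\mathcal{N}_2(\lambda)$), not only on its integral $\mathcal{N}_2(\lambda)$.
\begin{itemize}
\item In general $\sup F_\lambda\le\kappa^2/\lambda$. The bound is then $\sqrt{\mathcal{N}_2(\lambda)/(nm\lambda)}$, which forces $\lambda\gtrsim(mn)^{-b_1/(1+b_1)}$. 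This is violated by $\lambda=(mn)^{-b_1/(1+2\alpha_1 b_1)}$ when $\alpha_1<\frac12$.
\item Under the embedding, $\sup F_\lambda\le Z_1^2\lambda^{-2\alpha_1}$. This gives $\sqrt{\mathcal{N}_2(\lambda)\lambda^{-2\alpha_1}/(nm)}=O(1)$ exactly at the prescribed $\lambda$.
\end{itemize}
Moreover, for $\alpha_1<\frac12$ the residual $\Lambda_2^{1/2}r_\lambda(T_n)\Lambda_2^{-1/2}C_0$ is undefined, so ``qualification plus Cordes'' gives no bias bound. The paper instead routes $\Lambda_2^{1/2}g_\lambda(T_n)\Lambda_2^{1/2}C_0-C_0$ through the Tikhonov intermediates $(T_n+\lambda I)^{-1}$ and $(\Lambda_2+\lambda I)^{-1}$ (Terms 6a--6c). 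It controls the middle comparison by $\|(\Lambda_2+\lambda I)^{-1/2}(\Lambda_2-T_n)(\Lambda_2+\lambda I)^{-1}\Lambda_2^{1/2+\alpha_1}\|_{\mathrm{op}}\lesssim_p\sqrt{\mathcal{N}_2(\lambda)/(nm)}$, which again uses the embedding (Lemma~\ref{covariance_empirical_operator_with_embedding}).

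Separately, the standard Bernstein inequality over the $n$ independent curves has a range term of order $\sup F_\lambda/n$ (up to logarithms) that does not improve with $m$. It can therefore fail at the prescribed $\lambda$ when $m$ is large relative to $n$. The paper avoids this by bounding the Hilbert--Schmidt second moment (a U-statistic variance of order $1/m_i$ per curve) and applying Markov's inequality, which suffices for $\lesssim_p$.

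\textbf{The fallback for the cross terms $V_1,V_2$ fails.} If you do not use that $A_{ij}=Y_{ij}-\mu_0(t_{ij})$ is centred given $\delta$, the natural bound is
$\|(T_n+\lambda I)^{-1/2}V_1\|\le\big(\frac1n\sum_i\frac{1}{m_i(m_i-1)}\sum_{j\ne k}A_{ij}^2\delta^2(t_{ik})\big)^{1/2}$, which is of order $\|\delta\|_{L^2(T)}$. Splitting $A_{ij}\delta(t_{ik})$ by $2ab\le ca^2+c^{-1}b^2$ leaves a non-vanishing $cA_{ij}^2$ term, and optimizing over $c$ again gives order $\|\delta\|$. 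The mean error would then enter linearly, and you would only prove the statement with $\|\mu_0-\hat\mu_\eta\|_{L^2(T)}$ in place of its square.

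In the paper the square comes only from $V_3$, via $\sum_\beta\frac{\xi_\beta}{\xi_\beta+\lambda}\langle\delta\otimes\delta,\Psi_\beta\rangle^2\le\|\delta\|^4$. For $V_1,V_2$ the paper computes second moments using $\mathbb{E}[U_i]=\mathbb{E}_T\mathbb{E}_{X|T}[U_i]=0$ and $\sup_t|\hat\mu_\eta(t)|<\infty$, obtaining $n^{-1/2}+\sqrt{\mathcal{N}_2(\lambda)/(nm)}$. That is exactly your ``condition on $\delta$'' option: it treats $\hat\mu_\eta$ as fixed even though $\hat\mu_\eta$ is built from the same $Y_{ij}$.

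So you have correctly located the weak point. Closing it rigorously requires one of:
\begin{itemize}
\item sample splitting (which changes the estimator in the theorem);
\item a leave-one-curve-out or decoupling argument;
\item your uniform bound over a ball that contains both $\mu_0$ and, with high probability, $\hat\mu_\eta$.
\end{itemize}
The inequality $2ab\le a^2+b^2$ does not close it.
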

\begin{remark}\label{rem:different_regu}
    If we further combine the assumptions of Theorem~\ref{thm:mean} with those of Theorem~\ref{thm:covariance}, then for $\alpha>0$, we have 
\begin{equation*}
    \|\hat{C}_{\eta, \lambda}- C_{0}\|_{L^2(T \times T)} \lesssim_p \frac{1}{\sqrt{n}} + \eta^{2r} + \frac{\eta^{-\frac{1}{b}}}{mn} + \lambda^{r_{1}} + \frac{\lambda^{-\frac{1}{2b_{1}}}}{\sqrt{mn}},~~r = \min\{\alpha, \nu\},~ r_1 = \min\{\alpha_{1}, \nu\}.
\end{equation*}
Choosing $\eta = (mn)^{-\frac{b}{1+2 r b}}$ and $\lambda = (mn)^{-\frac{b_{1}}{1+ 2 r_{1}b_{1}}}$ yields
\begin{equation*}\label{final_covariance_rates}
\|\hat{C}_{\eta,\lambda}- C_{0}\|_{L^2(T \times T)} \lesssim_{p} \frac{1}{\sqrt{n}}+  (mn)^{-\min\left\{\frac{2 r b}{1+ 2 r b},\frac{r_{1}b_{1}}{1+ 2 r_{1}b_{1}} \right\}}.
\end{equation*}
\end{remark}

\begin{remark}
(i) In our analysis, we allow for a general choice of reproducing kernel $K:(T \times T)\times (T \times T)\to \mathbb{R}$ for the covariance function estimation. To recover the setting considered by Cai and Yuan \cite{cai2010nonparametric}, one may choose $K = k \otimes k$, where $k: T \times T \to \mathbb{R}$ is the reproducing kernel used for mean estimation. We note, however, that even if the eigenvalues of the integral operator associated with $k$ exhibit polynomial decay, this does not in general imply polynomial decay for the eigenvalues of the integral operator corresponding to $k \otimes k$. As a consequence, additional logarithmic factors appear in the resulting convergence rates, as observed in \cite{cai2010nonparametric}.\vspace{1mm}\\
(ii) If different regularization families are employed for estimating the mean and covariance functions, then the smoothness parameters $\alpha $ and $\alpha_{1}$ associated with the mean and covariance functions, respectively, must not exceed the minimum qualification of the corresponding regularization families, namely $\nu$ and $\nu_{1}$ to achieve the optimal convergence rates. That is, $\alpha \leq \nu \text{ and } \alpha_{1} \le \nu_{1}.$ Consequently, the effective parameters become $r = \min\{\alpha,\nu\}$  and $r_{1} = \min\{\alpha_{1},\nu_{1}\}$. \vspace{1mm}\\
(iii) Instead of employing separate source conditions as in Remark~\ref{rem:different_regu}, a unified source condition can be employed on the underlying stochastic process, i.e., path of the stochastic process $X$ lies in the range of $\Lambda_{1}^{\alpha}$, where $\Lambda_{1}$ is the integral operator corresponding to a reproducing kernel $k : T \times T \to \mathbb{R}$. Under this assumption, it is easy to prove that $\mu_{0} \in \mathcal{R}(\Lambda_{1}^{\alpha})$ and $C_{0} \in \mathcal{R}(\Lambda_{2}^{\alpha})$, where $\Lambda_{2}$ is the integral operator associated with the  reproducing kernel $k \otimes k: (T \times T) \times (T \times T) \to \mathbb{R}$.
\end{remark}
\begin{corollary}\label{condition_on_process}
    Suppose the sample paths of $X$ lie in $\mathcal{R}(\Lambda_{1}^{\alpha})$ almost surely, and Assumptions~\ref{mean_moment} and \ref{mean_eigenvalue_decay} hold. Let $\nu\geq 1$ be the qualification of the regularization family. Then for $\alpha \geq \frac{1}{2}$, 
    \begin{equation*}
    \begin{split}   
        \|C_{0}-\hat{C}_{\eta, \lambda}\|_{L^2(T \times T)} \lesssim_p & \|\mu_{0}-\mu_{\eta}\|_{L^2(T)}^2 + \frac{1}{\sqrt{n}}+ \frac{\lambda^{-\frac{1}{2b}}\sqrt{\log\frac{1}{\lambda}}}{\sqrt{nm}} + \lambda^{r},~~r  = \min\{\alpha, \nu\}.
     \end{split}
    \end{equation*}
Further, if Assumption~\ref{mean_embedding} holds, then, for $0 <\alpha \leq \frac{1}{2}$, 
\begin{equation*}
    \|C_{0}-\hat{C}_{\eta, \lambda}\|_{L^2(T \times T)} \lesssim_p  \|\mu_{0}-\mu_{\eta}\|_{L^2(T)}^2 + \frac{1}{\sqrt{n}}+ \frac{\lambda^{-\frac{1}{2b}}\sqrt{\log\frac{1}{\lambda}}}{\sqrt{nm}} + \lambda^{\alpha}.
\end{equation*}
\end{corollary}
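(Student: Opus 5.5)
The plan is to reduce Corollary~\ref{condition_on_process} to the general bound behind Theorem~\ref{thm:covariance}, with $K=k\otimes k$ and $\Lambda_2=\Lambda_1\otimes\Lambda_1$, before the bias and variance terms are balanced. In that proof I expect the error to split as
\[
\|\hat C_{\eta,\lambda}-C_0\|_{L^2(T\times T)}\le \|\hat C_{\eta,\lambda}-C^{\text{true}}_\lambda\|+\|C^{\text{true}}_\lambda-C_0\|.
\]
The first piece is controlled by $\|\mu_0-\hat\mu_\eta\|^2_{L^2(T)}$ together with cross terms of order $n^{-1/2}$. The second piece is a bias term $\lambda^{r_1}$ plus a variance term $n^{-1/2}+\sqrt{\mathcal{N}_2(\lambda)/(nm)}$, where $\mathcal{N}_2(\lambda)=\operatorname{trace}((\Lambda_2+\lambda I)^{-1}\Lambda_2)$. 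In Theorem~\ref{thm:covariance}, $\mathcal{N}_2(\lambda)$ is bounded by $\lambda^{-1/b_1}$ using Assumption~\ref{covariance_eigen_decay}. So the corollary needs three inputs: (a) a source condition $C_0\in\mathcal{R}(\Lambda_2^{\alpha})$ with $\alpha_1=\alpha$; (b) in the case $\alpha\le 1/2$, the embedding Assumption~\ref{covariance_embedding} for $[\mathcal{H}_2]^\alpha$; and (c) a replacement for Assumption~\ref{covariance_eigen_decay}, namely a bound on $\mathcal{N}_2(\lambda)$ computed directly from the product eigenvalues $\lambda_i\lambda_j$.

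\textbf{Step (a).} Write $X=\Lambda_1^\alpha h_X$ with $h_X\in L^2(T)$ almost surely, and likewise $\mu_0=\Lambda_1^\alpha\mathbb{E}h_X$. Then
\[
C_0=\mathbb{E}[(X-\mu_0)\otimes(X-\mu_0)]=(\Lambda_1^\alpha\otimes\Lambda_1^\alpha)\,\mathbb{E}[(h_X-\mathbb{E}h_X)\otimes(h_X-\mathbb{E}h_X)]=\Lambda_2^\alpha H.
\]
For $H\in L^2(T\times T)$ we need $\mathbb{E}\|h_X\|^2<\infty$. This integrability should be added as part of the hypothesis, or obtained by interpreting the condition in mean square; I would state it explicitly.

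\textbf{Step (b).} The eigenpairs of $\Lambda_2$ are $(\lambda_i\lambda_j,\ \psi_i\otimes\psi_j)$. Hence
\[
\sup_{s,t}\sum_{i,j}(\lambda_i\lambda_j)^{2\alpha}\psi_i(s)^2\psi_j(t)^2\le Z^4,
\]
so Assumption~\ref{mean_embedding} gives Assumption~\ref{covariance_embedding} with $Z_1=Z^2$ via \cite[Theorem~9]{Steinwart_2020_sobolev_norm}.

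\textbf{Step (c), the main computation.} We have $\mathcal{N}_2(\lambda)=\sum_{i,j}\frac{\lambda_i\lambda_j}{\lambda_i\lambda_j+\lambda}$, and by Assumption~\ref{mean_eigenvalue_decay} this is at most a constant times $\sum_{i,j}\min\{1,(ij)^{-b}/\lambda\}$. Group the terms by $N=ij$; the number of pairs with $ij\le M$ is $\asymp M\log M$. The pairs with $ij\le\lambda^{-1/b}$ contribute about $\lambda^{-1/b}\log(1/\lambda)$. The tail $\lambda^{-1}\sum_{ij>M}(ij)^{-b}$ with $M=\lambda^{-1/b}$ is bounded by summation by parts against the divisor-count function, giving $\lambda^{-1}M^{1-b}\log M$, which is again $\lambda^{-1/b}\log(1/\lambda)$. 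Therefore $\mathcal{N}_2(\lambda)\lesssim\lambda^{-1/b}\log(1/\lambda)$, and the variance term becomes $\lambda^{-1/(2b)}\sqrt{\log(1/\lambda)}/\sqrt{nm}$.

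The main obstacle is making sure the proof of Theorem~\ref{thm:covariance} uses Assumption~\ref{covariance_eigen_decay} only through $\mathcal{N}_2(\lambda)$, and nowhere through something like the lower eigenvalue bound. If it does use the lower bound elsewhere, for instance in a concentration step requiring $\mathcal{N}_2(\lambda)\lesssim nm\lambda$, I would verify that condition directly with the logarithmic bound from Step (c). Once that is settled, substituting (a)–(c) into the pre-tuned bound of Theorem~\ref{thm:covariance}, and leaving $\lambda$ free, gives both displayed inequalities, with $r=\min\{\alpha,\nu\}$ for $\alpha\ge 1/2$ and $\lambda^\alpha$ for $\alpha\le 1/2$.
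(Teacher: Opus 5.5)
Your proposal is correct and follows the route the paper intends. The paper gives no separate proof, only the remark preceding the corollary: take $K=k\otimes k$, transfer the source condition to $C_0\in\mathcal{R}(\Lambda_2^{\alpha})$, and accept a logarithmic loss in $\mathcal{N}_2(\lambda)$. Your Steps (b)--(c) fill in exactly the details it omits, and your concern is settled because Assumption~\ref{covariance_eigen_decay} enters the proof of Theorem~\ref{thm:covariance} only through $\mathcal{N}_2(\lambda)$; the bias bound $\sup_{x>0}x^{\alpha-\frac12}(x+\lambda)^{-(\nu-\frac12)}\le\lambda^{\alpha-\nu}$ needs no decay.

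There are two refinements. First, the integrability $\mathbb{E}\|h_X\|^2_{L^2(T)}<\infty$ need not be added as a hypothesis. Assumption~\ref{mean_moment} with $f=\psi_i$ gives $\mathbb{E}h_i^4\le C_1(\mathbb{E}h_i^2)^2$ for $h_i=\lambda_i^{-\alpha}\langle X,\psi_i\rangle_{L^2(T)}$. Paley--Zygmund applied to $\sum_{i\le N}h_i^2$ then shows that $\sum_i\mathbb{E}h_i^2=\infty$ would force $\mathbb{P}(\|h_X\|_{L^2(T)}=\infty)\ge\frac{1}{4C_1}$, contradicting almost-sure membership in the range. Second, the requirement of Lemma~\ref{covariance_power_bound} is a restriction on $\lambda$ rather than something verified once: namely $\lambda^{-1-\frac1b}\log\frac1\lambda\lesssim nm$, or $\lambda^{-2\alpha-\frac1b}\log\frac1\lambda\lesssim nm$ under Assumption~\ref{mean_embedding}. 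So ``leaving $\lambda$ free'' means free within that range, which the statement leaves implicit.
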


\begin{remark}
In Corollary~\ref{condition_on_process}, one can either use different regularization parameters or choose $\lambda = \eta$. In the case of $\lambda = \eta$, mean estimation is dominated by other terms, and optimal rates up to a logarithmic factor can be achieved. Similar rate for a specific choice of $\alpha = \frac{1}{2}$ has been discussed in \cite{cai2010nonparametric}.
\end{remark}

The following results (proved in Sections~\ref{subsec:thm-cov-lower-1} and \ref{subsec:thm-cov-lower-2}) combinedly establish the minimax optimality of the proposed covariance function estimator.
\begin{theorem}\label{thm:cov-lower-1}
Let $T \subset \mathbb{R}$ be compact and let $\mathcal{P}$ be a class of square-integrable stochastic processes on $T$ such that $\sup_{t \in T}\mathbb{E}[X^2(t)]< \infty$ with unknown mean function $\mu_{0}\in L^2(T)$ and covariance operator $C_{0}$ with kernel in $L^2(T\times T)$. Then there exists a universal constant $c_{1}>0$ such that for all $n \geq 1$,
\begin{equation*}
    \begin{split}
        \inf_{\hat C}
        \sup_{(\mu_{0},C_{0})\in\mathcal P} \mathbb{P}
        \bigg(\|\hat{C}-C_{0}\|_{L^2(T \times T)} \gtrsim 
        \inf_{\hat\mu}\sup_{(\mu_{0},C_{0})\in\mathcal{P}}
        \|\hat{\mu}-\mu_{0}\|_{L^2}^2 \\
        \vee\;
        \inf_{\hat{C}^{0}} \sup_{(\mu_{0},C_{0})\in\mathcal{P}}
        \|\hat{C}^{0}-C_{0}\|_{L^2(T \times T)}\bigg) \geq c_{1},
    \end{split}
\end{equation*}
where $\hat{C}^{0}$ denotes any estimator with access to the true mean function. 
\end{theorem}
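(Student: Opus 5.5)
Since $a\vee b\ge \tfrac12(a+b)$, it suffices to prove two separate lower bounds and then take the larger one. The first says that the covariance error is at least of the order of the oracle risk $\inf_{\hat C^0}\sup\|\hat C^0-C_0\|$. The second says it is at least of the order of the squared mean risk. Each bound must hold with probability bounded below by a universal constant $c_1$.

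\textbf{Oracle term.} Fix any estimator $\hat C$ that uses only the data $\{Y_{ij},t_{ij}\}$. I would restrict the supremum to the subclass $\mathcal P_0\subset\mathcal P$ of processes with $\mu_0=0$, which is known. On $\mathcal P_0$, $\hat C$ is a legitimate estimator that has access to the true mean. It follows that
\[
\sup_{\mathcal P}\mathbb P(\|\hat C-C_0\|\ge c\,\mathcal R_0)\ \ge\ \inf_{\hat C^0}\sup_{\mathcal P_0}\mathbb P(\|\hat C^0-C_0\|\ge c\,\mathcal R_0),
\]
where $\mathcal R_0$ denotes the oracle minimax rate. This reduces the claim to the oracle lower bound in probability. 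I would obtain that bound from a Fano/Varshamov--Gilbert construction of Gaussian processes with covariances $C_\theta$, $\theta\in\{0,1\}^M$. The construction is the one used in Theorem~\ref{thm:lower_covariance}, or the construction from Theorem~\ref{thm:mean-lower} transported to the covariance setting; either yields the risk in probability with constant probability. The only requirement is that the oracle minimax risk be read as the rate at which the probability statement holds.

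\textbf{Squared-mean term.} Here the mean must enter the covariance quadratically. I would take $X=\mu_0+\xi Z$ type processes, for instance $X(t)=\mu_\theta(t)\,\zeta + W(t)$ with $\zeta$ a Rademacher or Gaussian variable, but a cleaner choice is a two-point mixture. Let $X=\mu_\theta+\epsilon\,\phi_\theta$. Plugging an estimated mean into the empirical covariance shifts it by roughly $(\hat\mu-\mu_0)\otimes(\hat\mu-\mu_0)$. To make this shift a genuine information-theoretic obstruction, I would build pairs $(\mu_\theta,C_\theta)$ whose observation laws are close, with $\|C_\theta-C_{\theta'}\|\gtrsim\|\mu_\theta-\mu_{\theta'}\|^2$.

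The concrete idea is as follows. Let $X=\zeta g$ with $\zeta\in\{0,1\}$ Bernoulli$(p)$, or more generally let $X$ place mass on a deterministic shift. Then $\mu=pg$ and $C=p(1-p)\,g\otimes g$. For hypotheses $g_\theta$ chosen as in the mean lower bound of Theorem~\ref{thm:mean-lower}, we get $\|C_\theta-C_{\theta'}\|\asymp\|g_\theta\otimes g_\theta-g_{\theta'}\otimes g_{\theta'}\|$. For nearly orthogonal perturbations $g_\theta=g_0+\delta h_\theta$ this is of order $\delta^2$, which is the squared mean separation. Meanwhile the KL divergence between the data laws is controlled exactly as in the mean problem. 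Fano's inequality then gives $\|\hat C-C_\theta\|\gtrsim\delta^2\asymp(\text{mean minimax rate})^2$ with constant probability.

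\textbf{Main obstacle.} The hardest step is making the quadratic construction rigorous. The cross terms $\delta\,(g_0\otimes h_\theta+h_\theta\otimes g_0)$ must either vanish or aid the separation, and the KL bound must match the mean lower bound. I would first try $g_0=0$ with a random sign, $X=\zeta\,\delta h_\theta+W$ where $\zeta=\pm1$ and $W$ is a fixed Gaussian process. However, this gives mean zero, so I would replace it with a shifted sign so that the mean is $\delta h_\theta$ and the covariance contains $\delta^2 h_\theta\otimes h_\theta$. I would then verify that $\sup_t\mathbb E X^2(t)<\infty$ holds uniformly across the hypotheses.
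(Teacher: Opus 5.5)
Your treatment of the oracle term matches the paper's: an estimator that ignores a known mean is still an admissible $\hat C^0$, so the unknown-mean problem is at least as hard, and restricting to $\mu_0=0$ is an equivalent way to say this. For the squared-mean term you use the same device as the paper: make $X$ a random scalar times a deterministic function, so that $C_0$ is a fixed multiple of $\mu_0\otimes\mu_0$. What differs is the lower-bound mechanism. The paper uses the one-parameter family $X_\theta=\theta Z\phi$ with $Z\sim N(1,1)$, two hypotheses and Le Cam's lemma, and gets $d_C=|\theta_0^2-\theta_1^2|\ge d_\mu^2$. That is short and the KL is explicit, but the separation lies in a single direction at a parametric scale. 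The paper also asserts, rather than derives, the passage from $d_\mu^2$ to $\inf_{\hat\mu}\sup\|\hat\mu-\mu_0\|_{L^2}^2$. Your construction puts a Bernoulli multiplier on the Varshamov--Gilbert packing of Theorem~\ref{thm:mean-lower} and applies Fano, which makes that link directly: the covariance separation is $\gtrsim (nm)^{-\frac{2\alpha b}{1+2\alpha b}}$, the square of the nonparametric mean rate. Identifying this with the minimax mean risk still needs the matching upper bound of Theorem~\ref{thm:mean}. The obstacle you flag closes in three steps. First, take $g_0=0$, i.e.\ use $g_\theta=\Lambda_1^{\alpha}\sum_k\theta_kM^{-1/2}\psi_{k+M}$ itself; it already gives the nonzero mean $pg_\theta$, so no shifted sign or auxiliary $W$ is needed. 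Second, since $\langle g_\theta,g_{\theta'}\rangle_{L^2(T)}\ge 0$, the identity $\|u\otimes u-v\otimes v\|^2=\tfrac12\|u-v\|^2\|u+v\|^2+\tfrac12(\|u\|^2-\|v\|^2)^2$ gives $\|C_\theta-C_{\theta'}\|\ge\tfrac{p(1-p)}{\sqrt2}\|g_\theta-g_{\theta'}\|^2$. Third, passing to the joint law of $(\zeta_i,Y_{i1},\dots,Y_{im})$ bounds the KL by $p$ times the Gaussian-location KL of the mean problem. Your claim that the separation is of order $\delta^2$ when $g_0\neq 0$ is wrong: the cross terms then give order $\delta\|g_0\|$, which only helps. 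Finally, the Fano packing of Theorem~\ref{thm:lower_covariance} captures only the $(nm)$-part of the oracle rate. The $n^{-1/2}$ part needs a separate two-point argument across the $n$ curves, and a scalar family like the paper's is the natural tool for it.
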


As observed, the lower bound for covariance estimation when the mean function is unknown is larger than the corresponding lower bound when the mean function is known. Therefore, without loss of generality, we derive the lower bound for covariance estimation under the assumption that the mean function is zero.

\begin{theorem}\label{thm:lower_covariance}
    Suppose Assumptions~\ref{covariance_src} and \ref{covariance_eigen_decay} hold. Let $C_{0}   $ be a covariance function such that the corresponding mean function is zero. Then for any $\alpha_{1}>0$, we have

\begin{equation*}
    \limsup_{n \to \infty} \inf_{\hat{C}} \sup_{C_{0} \in \mathcal{R}(\Lambda_{2}^{\alpha_{1}})}P\left\{\|\hat{C}-C_{0}\|^2_{L^2(T\times T)}\geq (nm)^{-\frac{\alpha_{1} b_{1}}{1+ 2 \alpha_{1}b_{1}}} +\frac{1}{\sqrt{n}} \right\} > 0.
\end{equation*} 
\end{theorem}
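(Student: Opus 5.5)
The plan is to exploit the fact that the supremum in the statement is over the whole range $\mathcal{R}(\Lambda_{2}^{\alpha_{1}})$, with no bound on $\|H\|_{L^2(T\times T)}$. Because of this, the class contains covariance functions of arbitrarily large scale. A two-point (Le Cam) argument whose Kullback--Leibler divergence does not depend on that scale will then give the squared-norm threshold $(nm)^{-\frac{\alpha_1 b_1}{1+2\alpha_1 b_1}}+n^{-1/2}$ exactly as worded, even though this threshold is of the order of the unsquared rate. I will not try to get the nonparametric term from a Fano/Varshamov--Gilbert construction. Since $(nm)^{-\frac{\alpha_1 b_1}{1+2\alpha_1 b_1}}\le 1$, the threshold is at most $2$ times a quantity we can exceed directly by scaling.

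\textbf{Step 1 (hypotheses).} Fix $g\in L^2(T)$ with $\|g\|_{L^2(T)}=1$ such that $g\otimes g\in\mathcal{R}(\Lambda_2^{\alpha_1})$, say $g\otimes g=\Lambda_2^{\alpha_1}H_g$. For instance, if $K=k\otimes k$ one can take $g=\psi_1$, since then $\psi_1\otimes\psi_1$ is an eigenfunction $\Psi_\beta$ of $\Lambda_2$. For general $K$ this is the one structural requirement, and I will state it explicitly. Put $\delta=n^{-1/2}$ and a scale $s=s_n>0$ to be chosen. Define two zero-mean Gaussian processes $X^{(\ell)}=\sqrt{s(1+\ell\delta)}\,Z\,g$ with $Z\sim\mathcal N(0,1)$ and $\ell\in\{0,1\}$. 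Their covariances are $C^{(\ell)}=s(1+\ell\delta)\,g\otimes g$. Both lie in $\mathcal{R}(\Lambda_2^{\alpha_1})$, because the range is a linear space. They satisfy the moment requirements of Assumption~\ref{mean_moment} since they are Gaussian.

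\textbf{Step 2 (scale-free KL bound).} The observations $\{(t_{ij},Y_{ij})\}$ are a measurable function of $(X_i)_i$ and of $(t_{ij}),(\epsilon_{ij})$, and the latter have the same law under both hypotheses and are independent of $(X_i)_i$. By the data-processing inequality, the KL divergence between the two data laws is at most the KL divergence between the laws of $(Z_i\sqrt{s(1+\ell\delta)})_{i\le n}$. This equals
\[
n\,\mathrm{KL}\bigl(\mathcal N(0,s)\,\|\,\mathcal N(0,s(1+\delta))\bigr)=\tfrac n2\bigl(\tfrac{1}{1+\delta}-1+\log(1+\delta)\bigr)\le \tfrac{n\delta^2}{4}=\tfrac14 .
\]
The bound is independent of $s$. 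Pinsker's inequality then gives total variation at most $\sqrt{1/8}<1$ between the two hypotheses.

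\textbf{Step 3 (separation and conclusion).} The separation is $\|C^{(1)}-C^{(0)}\|_{L^2(T\times T)}=s\delta$. Choose $s_n=4n^{1/4}$, so that $(s\delta/2)^2=4n^{-1/2}$. I claim this exceeds $(nm)^{-\frac{\alpha_1 b_1}{1+2\alpha_1 b_1}}+n^{-1/2}$, which holds because $(nm)^{-\frac{\alpha_1 b_1}{1+2\alpha_1 b_1}}\le n^{-\frac{\alpha_1 b_1}{1+2\alpha_1 b_1}}\le n^{-1/2}\cdot n^{\frac{1}{2(1+2\alpha_1 b_1)}}$. This is fine when $\alpha_1 b_1$ is large, but in general it is cleaner to take $s_n=4\,n^{1/2}$. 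Then $(s\delta/2)^2=4\ge 2\ge$ threshold. For any estimator $\hat C$, the triangle inequality gives $\max_\ell\|\hat C-C^{(\ell)}\|\ge s\delta/2$ on every sample. Le Cam's two-point lemma then yields
\[
\inf_{\hat C}\max_{\ell}P_\ell\bigl\{\|\hat C-C^{(\ell)}\|^2_{L^2(T\times T)}\ge (nm)^{-\frac{\alpha_1 b_1}{1+2\alpha_1 b_1}}+n^{-1/2}\bigr\}\ge \tfrac{1-\sqrt{1/8}}{2}>0
\]
for every $n$. Taking $\limsup_{n\to\infty}$ gives the statement.

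The main obstacle is Step 1. We need a rank-one covariance $g\otimes g$ lying in $\mathcal{R}(\Lambda_2^{\alpha_1})$ for the given kernel $K$. Under Assumption~\ref{covariance_eigen_decay} alone this is not automatic, because the eigenfunctions $\Psi_\beta$ need not be symmetric positive-semidefinite kernels. If such a $g$ is unavailable, I would first try replacing $g\otimes g$ by any nonzero positive-semidefinite $C_\star\in\mathcal{R}(\Lambda_2^{\alpha_1})$, for instance $C_\star=\Lambda_2^{\alpha_1}H$ for a suitable $H$ as in Assumption~\ref{covariance_src}. I would then use the Gaussian process with covariance $s(1+\ell\delta)C_\star$. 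The KL bound of Step 2 again becomes scale-free, at most $n\cdot\mathrm{rank}$-weighted $\delta^2/4$, by diagonalising $C_\star$ and using data processing. To keep the KL bound finite I would truncate $C_\star$ to finite rank.
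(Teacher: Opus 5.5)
Modulo Step~1 (see below), your argument is valid for the statement exactly as printed. It is, however, a genuinely different and essentially degenerate route.

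\emph{The paper's route.} It runs a Varshamov--Gilbert/Fano argument inside the unit ball. It takes $M\asymp(nm)^{1/(1+2\alpha_1b_1)}$ and hypotheses $C_\theta=\Lambda_2^{\alpha_1}G_\theta$, where $G_\theta$ is a normalized $\{0,1\}$-combination of $\Psi_{M+1},\dots,\Psi_{2M}$ with $\|G_\theta\|_{L^2(T\times T)}\le 1$. The pairwise separation is $\|C_\theta-C_{\theta'}\|^2_{L^2(T\times T)}\gtrsim(nm)^{-2\alpha_1b_1/(1+2\alpha_1b_1)}$, the KL divergence is of order $M\lesssim\log N$, and Tsybakov's Theorem~2.5 finishes.

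\emph{Your route.} You use that $\mathcal{R}(\Lambda_2^{\alpha_1})$ is an unbounded linear space. Scaling keeps the KL fixed while the separation grows, so what you really show is that the minimax risk over this class is infinite.

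\emph{What each buys.} Your argument is short and robust: no Gaussian noise is needed, and it allows arbitrary design and $m_i$. The fixed-scale version of your Steps~1--2 (rank-one $g\otimes g$, $s$ constant, $\delta=n^{-1/2}$) is exactly the two-point argument giving $\|\hat C-C_0\|_{L^2(T\times T)}\gtrsim n^{-1/2}$ over a bounded set. The paper's proof of this theorem never actually derives that term. What your route cannot give is any rate. On a ball $\{\Lambda_2^{\alpha_1}H:\|H\|_{L^2(T\times T)}\le R\}$, where the theorem has content, scaling is unavailable, and the nonparametric term requires a multi-hypothesis construction like the paper's.

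That construction gives $\|\hat C-C_0\|^2\gtrsim(nm)^{-2\alpha_1b_1/(1+2\alpha_1b_1)}$, which is the \emph{square} of the printed threshold. The squared norm in the statement is evidently a typo. On a bounded ball, the literal threshold would be at odds with the upper bound of Theorem~\ref{thm:covariance} (for $\alpha_1\le\nu$). You have proved the printed statement only because it is misstated.

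On Step~1, your example $K=k\otimes k$ is typically inadmissible. For $\lambda_i\asymp i^{-b}$, the ordered products $\lambda_i\lambda_j$ behave like $(N/\log N)^{-b}$, so Assumption~\ref{covariance_eigen_decay} fails; the paper makes the same point when discussing $k\otimes k$.

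For the unbounded-class argument you do not need a rank-one element at all. The hypotheses give a covariance function $C_0\in\mathcal{R}(\Lambda_2^{\alpha_1})$. If the only such covariance were $0$, the statement would be false (take $\hat C=0$), so $C_0\neq 0$ is exactly the minimal requirement. The paper's proof tacitly needs it too, since its $C_\theta$ need not be symmetric or positive semidefinite.

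Take $C^{(\ell)}=s(1+\ell\delta)C_0$, and do not truncate: a finite-rank truncation of $C_0$ need not stay in $\mathcal{R}(\Lambda_2^{\alpha_1})$. Condition on the design instead.
\begin{itemize}
\item Given the design, $(X_i(t_{ij}))_{j\le m_i}$ is $\mathcal{N}(0,s\Sigma_i)$ versus $\mathcal{N}(0,s(1+\delta)\Sigma_i)$. The KL is at most $\mathrm{rank}(\Sigma_i)\,\delta^2/4\le m_i\delta^2/4$, independently of $s$.
\item Data processing (common noise law) and the chain rule (common design law) then bound the full-data KL by $\frac{\delta^2}{4}\sum_i m_i$.
\end{itemize}
With $\delta=(\sum_i m_i)^{-1/2}$ and $s$ large, your conclusion follows under the statement's own hypotheses.
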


\section{Proofs}\label{sec:proof}
In this section, we provide the proofs of all the results listed in the previous sections.
\subsection{Proof of Proposition~\ref{mean_alternate_estimator}}\label{subsec:prop}
By Mercer's theorem, we have
$$k(x,y) = \sum_{l} \lambda_{l} \psi_{l}(x) \psi_{l}(y),~~\forall~x,y \in T,$$
where $(\lambda_l,\psi_l)_{l\in\mathbb{N}}$ are the eigenvalue-eigenvector pairs of $\Lambda_1$, i.e., $\Lambda_{1} = \sum_{l} \lambda_{l} \langle \cdot, \psi_{l} \rangle_{L^2(T)} \psi_{l}.$ Define $k^{\frac{1}{2}}(x, \cdot) := \sum_{l} \sqrt{\lambda_{l}}\psi_{l}(x) \psi_{l}(\cdot),\,x\in T$. It is easy to verify that $k^{\frac{1}{2}}(x,\cdot)\in L^2(T)$ for all $x\in T$ and $\Lambda_{1}^{\frac{1}{2}}k^{\frac{1}{2}}(x, \cdot) = \sum_{l} \lambda_{l}\psi_{l}(x) \psi_{l}(\cdot) = k(x,\cdot )$. Moreover for any $f\in L^2(T)$, $(\Lambda^{\frac{1}{2}}_1f)(x)=\langle f,k^{\frac{1}{2}}(x,\cdot)\rangle_{L^2(T)},\,x\in T$. Now, using these, we first claim that if $(\lambda, f)$ is an eigenvalue-eigenvector pair for $\hat{A}_{n}$, then $(\lambda, \Lambda^{\frac{1}{2}}f)$ is an eigenvalue-eigenvector pair for $\hat{S}_{n}$. To this end, we have $\hat{A}_{n} f = \lambda f$, so
\begin{equation*}
    \frac{1}{n}\sum_{i=1}^{n}\frac{1}{m_{i}}\sum_{j=1}^{m_{i}} (\Lambda_{1}^{\frac{1}{2}}f)(t_{ij}) k^{\frac{1}{2}}(\cdot, t_{ij})
    =  \lambda f.
\end{equation*}
Since $\Lambda_{1}^{\frac{1}{2}}f\in \mathcal{H}_1$ for $f\in L^2(T)$, we have 
\begin{align*}
    \begin{split}
        \hat{S}_{n}(\Lambda_{1}^{\frac{1}{2}}f)  
         &= \frac{1}{n}\sum_{i=1}^{n}\frac{1}{m_{i}}
         \sum_{j=1}^{m_{i}}(\Lambda_{1}^{\frac{1}{2}}f)(t_{ij}) k(\cdot, t_{ij})=
         \Lambda_{1}^{\frac{1}{2}}\left(\frac{1}{n}\sum_{i=1}^{n}\frac{1}{m_{i}}
         \sum_{j=1}^{m_{i}} (\Lambda_{1}^{\frac{1}{2}}f)(t_{ij}) k^{\frac{1}{2}}(\cdot, t_{ij})\right)\\
&        =  \lambda \Lambda_{1}^{\frac{1}{2}}f.
    \end{split}
\end{align*}
Therefore, 
if $\{(\lambda_{n}, f_{n})\}_{n \in \mathbb{N}}$ is the set of eigenvalue-eigenvector pairs for $\hat{A}_{n}$, then the set of eigenvalue-eigenvector pairs for $\hat{S}_{n}$ is given by $\{(\lambda_{n}, \Lambda_{1}^{\frac{1}{2}}f_{n})\}_{n \in \mathbb{N}}$. This implies
\begin{align*}
        g_{\lambda}(\hat{S}_{n})V & =  \sum_{k} g_{\lambda}(\lambda_{k}) \langle V, \Lambda_{1}^{\frac{1}{2}}f_{k}\rangle_{\mathcal{H}_{1}}\Lambda_{1}^{\frac{1}{2}}f_{k}\\
        &=  \Lambda_{1}^{\frac{1}{2}}\left(\sum_{k}\frac{g_{\lambda}(\lambda_{k})}{n}\sum_{i=1}^{n}\frac{1}{m_{i}}\sum_{j=1}^{m_{i}}Y_{ij}\langle k(\cdot, t_{ij}), \Lambda_{1}^{\frac{1}{2}}f_{k} \rangle_{\mathcal{H}_{1}}f_{k}\right)\\
        &=  \Lambda_{1}^{\frac{1}{2}}\left(\sum_{k}\frac{g_{\lambda}(\lambda_{k})}{n}\sum_{i=1}^{n}\frac{1}{m_{i}}\sum_{j=1}^{m_{i}}Y_{ij}\langle k^{\frac{1}{2}}(\cdot, t_{ij}), f_{k} \rangle_{L^2(T)}f_{k}\right)\\
        & =  \Lambda_{1}^{\frac{1}{2}}\left(\sum_{k}g_{\lambda}(\lambda_{k})\langle V_{1}, f_{k}\rangle_{L^2(T)}f_{k}\right)
        =  \Lambda_{1}^{\frac{1}{2}}g_{\lambda}(\hat{A}_{n})V_{1}
\end{align*}
and the result follows.
\subsection{Proof of Theorem~\ref{thm:mean}}\label{subsec:thm-mean-upper}

\underline{\textit{Case-1:} $0 < \alpha \leq \frac{1}{2}$}.
    We start with the error term
    \begin{align*}
        \|\hat{\mu}_{\lambda}-\mu_{0}\|_{L^2(T)} &=  \|\Lambda_{1}^{\frac{1}{2}}g_{\lambda}(\hat{A}_{n})V_{1}-\mu_{0}\|_{L^2(T)}\\
         &\leq  \underbrace{\|\Lambda_{1}^{\frac{1}{2}}g_{\lambda}(\hat{A}_{n})(V_{1}- \Lambda_{1}^{\frac{1}{2}}\mu_{0})\|_{L^2(T)}}_{\textit{Term-1}} + \underbrace{\|\Lambda_{1}^{\frac{1}{2}}g_{\lambda}(\hat{A}_{n})\Lambda_{1}^{\frac{1}{2}}\mu_{0}-\mu_{0}\|_{L^2(T)}}_{\textit{Term-2}}.
    \end{align*}
\textit{Bounding \textit{Term-1}:}
\begin{align*}
        \|\Lambda_{1}^{\frac{1}{2}}g_{\lambda}(\hat{A}_{n})(V_{1}- \Lambda_{1}^{\frac{1}{2}}\mu_{0})\|_{L^2(T)} &\leq  \|\Lambda_{1}^{\frac{1}{2}}(\Lambda_{1}+\lambda I)^{-\frac{1}{2}}\|_{\text{op}} \|(\Lambda_{1}+\lambda I)^{\frac{1}{2}}(\hat{A}_{n}+\lambda I)^{-\frac{1}{2}}\|^2_{\text{op}}\\
        & \qquad\times \|(\hat{A}_{n}+\lambda I)^{\frac{1}{2}}g_{\lambda}(\hat{A}_{n})(\hat{A}_{n}+\lambda I)^{\frac{1}{2}}\|_{\text{op}}\\
        & \qquad\times \|(\Lambda_{1}+\lambda I)^{-\frac{1}{2}}(V_{1}-\Lambda_{1}^{\frac{1}{2}}\mu_{0})\|_{L^2(T)}.
\end{align*}
Using Lemmas~\ref{mean_empirical_estimation}, \ref{mean_powers_constant_bound} and properties of regularization family, we get

\begin{equation*}
    \|\Lambda_{1}^{\frac{1}{2}}g_{\lambda}(\hat{A}_{n})(V_{1}- \Lambda_{1}^{\frac{1}{2}}\mu_{0})\|_{L^2(T)}\lesssim_{p} \sqrt{\frac{\mathcal{N}_{1}(\lambda)}{mn}} +\sqrt{\frac{1}{n}}.
\end{equation*}
\vspace{0.2cm}
\noindent
\textit{Bounding \textit{Term-2}:} We obtain
\begin{align*}
        \|\Lambda_{1}^{\frac{1}{2}}g_{\lambda}(\hat{A}_{n})\Lambda_{1}^{\frac{1}{2}}\mu_{0}- \mu_{0}\|_{L^2(T)} & \leq \underbrace{\|\Lambda_{1}^{\frac{1}{2}}(g_{\lambda}(\hat{A}_{n})-(\hat{A}_{n}+\lambda I)^{-1})\Lambda_{1}^{\frac{1}{2}}\mu_{0}\|_{L^2(T)}}_{\text{Term-2a}} 
        \end{align*}
        \begin{align*}
        & \qquad+ \underbrace{\|\Lambda_{1}^{\frac{1}{2}}((\hat{A}_{n}+\lambda I)^{-1}-(\Lambda_{1}+\lambda I)^{-1})\Lambda_{1}^{\frac{1}{2}}\mu_{0}\|_{L^2(T)}}_{\text{Term-2b}}\\
        &\qquad\qquad+ \underbrace{\|(\Lambda_{1}^{\frac{1}{2}}(\Lambda_{1}+\lambda I)^{-1}\Lambda_{1}^{\frac{1}{2}}-I)\mu_{0}\|_{L^2(T)}}_{\text{Term-2c}}.
\end{align*}
\noindent
\underline{Bound for Term-2a:}
\begin{align*}
        &\|\Lambda_{1}^{\frac{1}{2}}(g_{\lambda}(\hat{A}_{n})-(\hat{A}_{n}+\lambda I)^{-1})\Lambda_{1}^{\frac{1}{2}}\mu_{0}\|_{L^2(T)} \\&=  \|\Lambda_{1}^{\frac{1}{2}}(r_{\lambda}(\hat{A}_{n})+\lambda g_{\lambda}(\hat{A}_{n}))(\hat{A}_{n}+\lambda I)^{-1}\Lambda_{1}^{\frac{1}{2}}\mu_{0}\|_{L^2(T)}\\
        &\leq  \|\Lambda_{1}^{\frac{1}{2}}(\Lambda_{1}+\lambda I)^{-\frac{1}{2}}\|_{\text{op}}\|(\Lambda_{1}+\lambda I)^{\frac{1}{2}}(\hat{A}_{n}+\lambda I)^{-\frac{1}{2}}\|_{\text{op}}\\
        & \qquad\times \|(\hat{A}_{n}+\lambda I)^{\frac{1}{2}}(r_{\lambda}(\hat{A}_{n})+\lambda g_{\lambda}(\hat{A}_{n}))\|_{\text{op}}\\
        & \qquad\times\|(\hat{A}_{n}+\lambda I)^{-1}\Lambda_{1}^{\frac{1}{2}}\mu_{0}\|_{L^2(T)}\\
        &\stackrel{(*)} \lesssim_{p}  \sqrt{\lambda}\|(\hat{A}_{n}+\lambda I)^{-1}\Lambda_{1}^{\frac{1}{2}}\mu_{0}\|_{L^2(T)}\\
        &\lesssim \|(\Lambda_{1}+\lambda I)^{-\frac{1}{2}}(\hat{A}_{n}-\Lambda_{1}) (\Lambda_{1}+\lambda I)^{-1}\Lambda_{1}^{\frac{1}{2}}\mu_{0}\|_{L^2(T)}\\
        & \qquad+ \sqrt{\lambda}\|(\Lambda_{1}+\lambda I)^{-1}\Lambda_{1}^{\frac{1}{2}}\Lambda_{1}^{\alpha}h\|_{L^2(T)},
\end{align*}
where $(*)$ follows from Lemma~\ref{mean_powers_constant_bound} and properties of the regularization family. Next, from Lemma~\ref{mean_empirical_operator}, we get
\begin{equation*}
\begin{split}
    \|\Lambda_{1}^{\frac{1}{2}}(g_{\lambda}(\hat{A}_{n})-(\hat{A}_{n}+\lambda I)^{-1})\Lambda_{1}^{\frac{1}{2}}\mu_{0}\|_{L^2(T)} \lesssim_{p} &\sqrt{\frac{\mathcal{N}_{1}(\lambda)}{nm}}+\sqrt{\lambda}\sup_{i}\left|\frac{\lambda_{i}^{\alpha+\frac{1}{2}}}{\lambda_{i}+\lambda}\right|\\
    \leq &  \sqrt{\frac{\mathcal{N}_{1}(\lambda)}{nm}} + \lambda^{\alpha}.
\end{split}
\end{equation*}

\noindent
\underline{Bound for Term-2b:}
\begin{align*}
    &\|\Lambda_{1}^{\frac{1}{2}}((\hat{A}_{n}+\lambda I)^{-1}-(\Lambda_{1}+\lambda I)^{-1})\Lambda_{1}^{\frac{1}{2}}\mu_{0}\|_{L^2(T)}\\
    & =  \|\Lambda_{1}^{\frac{1}{2}}(\hat{A}_{n}+\lambda I)^{-1}(\Lambda_{1}-\hat{A}_{n})(\Lambda_{1}+\lambda I)^{-1}\Lambda_{1}^{\frac{1}{2}}\mu_{0}\|_{L^2(T)}\\
    & \stackrel{(\dagger)}{\lesssim_{p}} \|(\Lambda_{1}+\lambda I)^{-\frac{1}{2}}(\Lambda_{1}-\hat{A}_{n})(\Lambda_{1}+\lambda I)^{-1}\Lambda_{1}^{\frac{1}{2}}\mu_{0}\|_{L^2(T)}\\
    &\lesssim_{p}  \sqrt{\frac{\mathcal{N}_{1}(\lambda)}{nm}},
\end{align*}
where we employed Lemma~\ref{mean_powers_constant_bound} in $(\dagger)$ and the last step follows from Lemma~\ref{mean_empirical_operator}.\\

\noindent
\underline{Bound for Term-2c:}
\begin{align*}
&\|(\Lambda_{1}^{\frac{1}{2}}(\Lambda_{1}+\lambda I)^{-1}\Lambda_{1}^{\frac{1}{2}}-I)\mu_{0}\|_{L^2(T)} = \|(\Lambda_{1}^{\frac{1}{2}}(\Lambda_{1}+\lambda I)^{-1}\Lambda_{1}^{\frac{1}{2}}-I)\Lambda_{1}^{\alpha}h\|_{L^2(T)}\\
&\leq  \sup_{i}\left|\left(\frac{\lambda_{i}}{\lambda_{i}+\lambda}-1\right)\lambda_{i}^{\alpha}\right| = \sup_{i}\left|\left(\frac{\lambda\lambda_{i}^{\alpha}}{\lambda_{i}+\lambda}\right)\right|
\leq  \lambda^{\alpha}.
\end{align*}
\noindent
Combining Terms 2a-2c, we obtain
\begin{equation*}
    \|\Lambda_{1}^{\frac{1}{2}}g_{\lambda}(\hat{A}_{n})\Lambda_{1}^{\frac{1}{2}}\mu_{0}- \mu_{0}\|_{L^2(T)} \lesssim  \sqrt{\frac{\mathcal{N}_{1}(\lambda)}{nm}} + \lambda^{\alpha},
\end{equation*}
which combined with Term-1 yields
\begin{equation*}
    \|\hat{\mu}_{\lambda}-\mu_{0}\|_{L^2(T)} \lesssim \frac{1}{\sqrt{n}}+\sqrt{\frac{\mathcal{N}_{1}(\lambda)}{nm}} + \lambda^{\alpha}.
\end{equation*}
Under Assumption~\ref{mean_eigenvalue_decay}, since $\mathcal{N}_{1}(\lambda) \lesssim \lambda^{-\frac{1}{b}}$, we have
\begin{equation*}
    \|\hat{\mu}_{\lambda}-\mu_{0}\|_{L^2(T)} \lesssim \frac{1}{\sqrt{n}}+\frac{\lambda^{-\frac{1}{2b}}}{\sqrt{nm}} + \lambda^{\alpha}.
\end{equation*}
and the results follows by plugging the value of $\lambda = (mn)^{-\frac{b}{1+2\alpha b}}$.\\

\noindent
\underline{\textit{Case-2:} $\alpha \geq \frac{1}{2}$} Consider
\begin{align*}
        &\|\hat{\mu}_{\lambda}-\mu_{0}\|_{L^2(T)}  =  \|\Lambda_{1}^{\frac{1}{2}}g_{\lambda}(\hat{A}_{n})V_{1}-\mu_{0}\|_{L^2(T)}\\
        &\leq  \|\Lambda_{1}^{\frac{1}{2}}g_{\lambda}(\hat{A}_{n})(V_{1}-\hat{A}_{n}\Lambda_{1}^{\alpha-\frac{1}{2}}h)\|_{L^2(T)} + \|\Lambda_{1}^{\frac{1}{2}}r_{\lambda}(\hat{A}_{n})\Lambda_{1}^{\alpha-\frac{1}{2}}h\|_{L^2(T)}\\
        & \leq  \|\Lambda_{1}^{\frac{1}{2}}(\Lambda_{1}+\lambda I)^{-\frac{1}{2}}\|_{\text{op}}\|(\Lambda_{1}+\lambda I)^{\frac{1}{2}}(\hat{A}_{n}+\lambda I)^{-\frac{1}{2}}\|^2_{\text{op}}\\
        & \qquad\times\|(\hat{A}_{n}+\lambda I)^{\frac{1}{2}}g_{\lambda}(\hat{A}_{n})(\hat{A}_{n}+\lambda I)^{\frac{1}{2}}\|_{\text{op}}\|(\Lambda_{1}+\lambda I)^{-\frac{1}{2}}(V_{1}-\hat{A}_{n}\Lambda_{1}^{\alpha-\frac{1}{2}}h)\|_{L^2(T)}\\
        & \qquad\qquad+ \|\Lambda_{1}^{\frac{1}{2}}r_{\lambda}(\hat{A}_{n})\Lambda_{1}^{\alpha-\frac{1}{2}}h\|_{L^2(T)}\\
        &\stackrel{(*)}\lesssim_{p}  \sqrt{\frac{\mathcal{N}_{1}(\lambda)}{nm}}+\frac{1}{\sqrt{n}} + \|(\hat{A}_{n}+\lambda I)^{\frac{1}{2}}r_{\lambda}(\hat{A}_{n})(\hat{A}_{n}+\lambda I)^{\nu-\frac{1}{2}}\|_{\text{op}}\\
        & \qquad\qquad\times\|(\hat{A}_{n}+\lambda I)^{-(\nu-\frac{1}{2})}\Lambda_{1}^{\alpha-\frac{1}{2}}h\|_{L^2(T)}\\
        &\lesssim  \sqrt{\frac{\mathcal{N}_{1}(\lambda)}{nm}}+\frac{1}{\sqrt{n}} + \lambda^{\nu}\|(\hat{A}_{n}+\lambda I)^{-(\nu-\frac{1}{2})}\Lambda_{1}^{\alpha-\frac{1}{2}}h\|_{L^2(T)},
\end{align*}
where $(*)$ follows from Lemmas~\ref{mean_empirical_estimation},~\ref{mean_powers_constant_bound} and properties of the regularization family. To bound the last term, we use the qualification property of the regularization family. Let us define $\nu_{1} = \nu-\frac{1}{2}$ and $\lfloor \nu_{1} \rfloor $ is the greatest integer value of $\nu_{1}$. Now consider the third term
\begin{align*}
        &\|(\hat{A}_{n}+\lambda I)^{-(\nu-\frac{1}{2})}\Lambda_{1}^{\alpha-\frac{1}{2}}h\|_{L^2(T)}  = \|(\hat{A}_{n}+\lambda I)^{-(\nu_{1}-\lfloor \nu_{1} \rfloor)}(\hat{A}_{n}+\lambda I)^{-\lfloor \nu_{1} \rfloor}\Lambda_{1}^{\alpha-\frac{1}{2}}h\|_{L^2(T)}\\
         &\leq  \|(\hat{A}_{n}+\lambda I)^{-(\nu_{1}-\lfloor \nu_{1} \rfloor)}(\Lambda_{1}+\lambda I)^{(\nu_{1}-\lfloor \nu_{1} \rfloor)}\|_{\text{op}}\\
        & \qquad\qquad\times \|(\Lambda_{1}+\lambda I)^{-(\nu_{1}-\lfloor \nu_{1} \rfloor)}(\hat{A}_{n}+\lambda I)^{-\lfloor \nu_{1} \rfloor}\Lambda_{1}^{\alpha-\frac{1}{2}}h\|_{L^2(T)} \\
        &\lesssim_{p}   \|((\Lambda_{1}+\lambda I)^{-(\nu_{1}-\lfloor \nu_{1} \rfloor)}(\hat{A}_{n}+\lambda I)^{-\lfloor \nu_{1} \rfloor}- (\Lambda_{1}+\lambda I)^{-\nu_{1}})\Lambda_{1}^{\alpha-\frac{1}{2}}h\|_{L^2(T)}\\
        & \qquad\qquad+ \|(\Lambda_{1}+\lambda I)^{-\nu_{1}}\Lambda_{1}^{\alpha-\frac{1}{2}}h\|_{L^2(T)}\\
        & \stackrel{(*)}{\lesssim}
          \frac{1}{\lambda^{\nu_{1}}}\|(\Lambda_{1}+\lambda I)^{-\frac{1}{2}}(\Lambda_{1}-\hat{A}_{n})(\Lambda_{1}+\lambda I)^{-\frac{1}{2}}\|_{\text{op}}+ \sup_{i} \left|\frac{\lambda_{i}^{\alpha-\frac{1}{2}}}{(\lambda_{i}+\lambda)^{\nu-\frac{1}{2}}}\right|\\
 & \lesssim \frac{1}{\lambda^{\nu_{1}}}\|(\Lambda_{1}+\lambda I)^{-\frac{1}{2}}(\Lambda_{1}-\hat{A}_{n})(\Lambda_{1}+\lambda I)^{-\frac{1}{2}}\|_{\text{op}}+ \sup_{i}\left| \frac{i^{-b(\alpha-\frac{1}{2})}}{(i^{-b}+\lambda)^{\nu-\frac{1}{2}}}\right|\\    
        & \lesssim \frac{1}{\lambda^{\nu_{1}}}\|(\Lambda_{1}+\lambda I)^{-\frac{1}{2}}(\Lambda_{1}-\hat{A}_{n})(\Lambda_{1}+\lambda I)^{-\frac{1}{2}}\|_{\text{op}} + \lambda^{\alpha-\nu},
\end{align*}
where $(*)$ follows from Lemma~\ref{reducing_power} and for last step we use Lemma~\ref{sup_bound} with $\alpha \leq \nu$. Now by using Lemma~\ref{mean_empirical_operator}, we get
\begin{equation*}
    \|\hat{\mu}_{\lambda}-\mu_{0}\|_{L^2(T)} \lesssim \sqrt{\frac{\mathcal{N}_{1}(\lambda)}{nm}}+\frac{1}{\sqrt{n}}+ \lambda^{\alpha}
    \lesssim_{p}  \frac{\lambda^{-\frac{1}{2b}}}{\sqrt{nm}} + \frac{1}{\sqrt{n}}+ \lambda^{\alpha}.
\end{equation*}

\noindent
Note that if $\alpha > \nu$ then
\begin{equation*}
    \|\hat{\mu}_{\lambda}-\mu_{0}\|_{L^2(T)} \lesssim \sqrt{\frac{\mathcal{N}_{1}(\lambda)}{nm}}+\frac{1}{\sqrt{n}}+ \lambda^{\nu}
    \lesssim_{p}  \frac{\lambda^{-\frac{1}{2b}}}{\sqrt{nm}} + \frac{1}{\sqrt{n}}+ \lambda^{\nu}.
\end{equation*}

\noindent
So by using $\lambda = (nm)^{-\frac{b}{1+2 r b}}$ in the above bound, we get

\begin{equation*}
    \|\hat{\mu}_{\lambda}-\mu_{0}\|_{L^2(T)} \lesssim_{p} \frac{1}{\sqrt{n}} + (mn)^{-\frac{rb}{1+ 2 rb}},~~r = \min\{\alpha, \nu\}. 
\end{equation*}
\begin{remark}
    Note that Assumption~\ref{mean_embedding} together with the condition $\sup_{t \in T}\mathbb{E}[X^2(t)] < \infty$, can be replaced by the assumptions $\sup_{k}\|\psi_{k}\|_{\infty} < \infty$ and $\mathbb{E}\|X\|^2_{L^2} < \infty$ when $0 < \alpha \le \frac{1}{2b}$, without affecting the convergence rates of $\hat{\mu}$. Adopting this alternative set of assumptions requires following adjustments to the arguments in Lemma~\ref{mean_empirical_estimation} and Lemma~\ref{mean_empirical_operator}. For all $0 < \alpha \le \frac{1}{2b}$,
    \begin{equation*}
        \begin{split}
        \sum_{\beta}\frac{\phi_{\beta}^2(t)}{(\lambda_{\beta}+\lambda)} = \sum_{\beta}\frac{\lambda_\beta\psi_{\beta}^2(t)}{(\lambda_{\beta}+\lambda)} & =\sum_{\beta}\frac{\lambda_{\beta}^{2 \alpha} \lambda_{\beta}^{1-2 \alpha}\psi_{\beta}^2(t)}{(\lambda_{\beta}+\lambda)^{2 \alpha} (\lambda_{\beta}+\lambda)^{1-2 \alpha}}\\
        &\leq  \frac{\sup_{\beta}\|\psi_{\beta}\|_{\infty}}{\lambda^{2 \alpha}}\sum_{\beta}\lambda_{\beta}^{2 \alpha} \lesssim \frac{1}{\lambda^{2\alpha}}.
        \end{split}
    \end{equation*}
\end{remark}
\subsection{Proof of Theorem~\ref{thm:mean-lower}}\label{subsec:thm-mean-lower}
As the lower bound for a specific case justifies the lower bound for the general case, we assume $m_{i}=m,~\forall~1 \leq i \leq n$. Similar to \cite[Theorem~$6$]{cai2010nonparametric} observe that, taking $m=1$, i.e., constant observations gives us $(mn)^{-\frac{\alpha b}{1+ 2 \alpha b}} = n^{-\frac{\alpha b}{1+ 2 \alpha b}} \geq n^{-\frac{1}{2}}$. So we just need to prove that 
\begin{equation*}
    \lim_{a \to 0} \lim_{n \to \infty} \inf_{\hat{\mu}} \sup_{\mu_{0} \in \mathcal{R}(\Lambda_{1}^{\alpha})} \mathbb{P}\left\{\|\hat{\mu}-\mu_{0}\|_{L^2(T)} \geq a  (mn)^{-\frac{\alpha b}{1+2 \alpha b}}\right\} =1.
\end{equation*}
Let $M = c_{0}(nm)^{\frac{1}{1+ 2\alpha b}}$ for some constant $c_{0}>0$. We assume that $t_{ij} \sim \text{Unif}(T),~\epsilon_{ij}\sim \mathcal{N}(0, \sigma^2)$ and $X_{i}(t)= \mu(t)+ Z_{i}(t)$ where $Z_{i}$ is i.i.d. Gaussian process with zero mean and uniformly bounded variance. 
For $\theta \in \{0,1\}^{M}$, we define 
    \begin{equation*}
        g_{\theta} = \sum_{k=1}^{M} \theta_{k}M^{-\frac{1}{2}}\psi_{k+M}.
    \end{equation*}
Then, clearly $\mu_{\theta}:= \Lambda_{1}^{\alpha}g_{\theta}\in  \mathcal{R}(\Lambda_{1}^{\alpha}),~\alpha >0$. 

With the use of Lemma~\ref{VGbound}, we have  $\{\theta^{i} \in \{0,1\}^M,~1 \leq i \leq N\}$ such that $\theta^{0} = (0,\cdots,0)$  and Hamming distance between any two elements from the set is greater than $\frac{M}{8}$ with $N \geq 2^{\frac{M}{8}}$. Let $P_{\theta}$ be the joint probability distribution of $\{(t_{ij},Y_{ij}),~1\leq i \leq n,~ 1\leq j \leq m\}$ for $\mu= \mu_{\theta}$. Then the Kullback-divergence between $P_\theta$ and $P_{\theta'}$ is given by
\begin{align*}
\begin{split}
    \mathcal{K}(P_{\theta}, P_{\theta'}|\{t_{ij}\})& =  \frac{1}{2}\sum_{i=1}^{n}\sum_{j=1}^{m}\frac{(\mu_{\theta}(t_{ij})-\mu_{\theta'}(t_{ij}))^2}{\sigma_{Z}^2(t_{ij})+\sigma^2}
     \lesssim \sum_{i=1}^{n}\sum_{j=1}^{m}\frac{(\mu_{\theta}(t_{ij})-\mu_{\theta'}(t_{ij}))^2}{\inf_{t \in T}\sigma_{Z}^2(t)+\sigma^2}\\
    & \lesssim \sum_{i=1}^{n}\sum_{j=1}^{m}(\mu_{\theta}(t_{ij})-\mu_{\theta'}(t_{ij}))^2
 \lesssim  nm \|\mu_{\theta}- \mu_{\theta'}\|^2_{L^2(T)}\end{split}
\end{align*}
\begin{align*}
    \begin{split}
        & \lesssim  nm \sum_{k=1}^{M}(\theta_{k}-\theta'_{k})^2 M^{-1} \lambda^{2 \alpha}_{k+M} 
         \lesssim  nm H(\theta, \theta')M^{-1} M^{-2 \alpha b}\\
        & \leq  nm M^{-2 \alpha b} \lesssim M \lesssim \log N,
    \end{split}
\end{align*}
where last step follows as $N \geq  2^{\frac{M}{8}}$ and $H(\theta, \theta')$ is the Hamming distance. Moreover, note that
\begin{align*}
    \begin{split}
        \|\mu_{\theta}- \mu_{\theta'}\|^2_{L^2(T)} & =  \sum_{k=1}^{M}(\theta_{k}-\theta'_{k})^2M^{-1} \lambda_{k+M}^{2\alpha}
         \gtrsim  M^{-1} M^{-2 \alpha b} H(\theta, \theta')
        \gtrsim M^{-2 \alpha b}\\
        & \geq  (mn)^{-\frac{2\alpha b}{1+ 2 \alpha b}},
    \end{split}
\end{align*}
where we used Varshamov-Gilbert bound (Lemma~\ref{VGbound}) in the penultimate inequality.
Therefore, the result follows from \cite[Theorem~$2.5$]{tsyback2009lb}.
\subsection{Proof of Theorem~\ref{thm:covariance}}\label{subsec:thm-cov-upper}
We start with the error term
\begin{align*}
    \begin{split}    \|\hat{C}_{\eta,\lambda}-C_{0}\|_{L^2(T \times T)} & =  \|\hat{C}_{\eta, \lambda}- C^{\text{true}}_{\lambda} + C^{\text{true}}_{\lambda} - C_{0} \|_{L^2(T \times T)}\\
        & \leq  \underbrace{\|\hat{C}_{\eta,\lambda}- C^{\text{true}}_{\lambda}\|_{L^2(T \times T)}}_{\textit{Term-3}} + \underbrace{\|C^{\text{true}}_{\lambda} - C_{0} \|_{L^2(T \times T)}}_{\textit{Term-4}}
    \end{split}.
\end{align*}

\noindent
\textit{Bounding \textit{Term-3}:}
\begin{align*}
    \begin{split}
        &\|\hat{C}_{\eta, \lambda}- C^{\text{true}}_{\lambda}\|_{L^2(T \times T)} = \|\Lambda^{\frac{1}{2}}_{2}g_{\lambda}(T_{n})(O_{1}-O_{2})\|_{L^2(T \times T)}\\
        & \leq \|(\Lambda_{2}+\lambda I)^{\frac{1}{2}}(T_{n}+\lambda I)^{-\frac{1}{2}}\|_{\text{op}}
         \|(T_{n}+\lambda I)^{\frac{1}{2}}g_{\lambda}(T_{n})(T_{n}+\lambda I)^{\frac{1}{2}}\|_{\text{op}}\\
        & \qquad\qquad\times\|\Lambda^{\frac{1}{2}}_{2}(\Lambda_{2}+\lambda I)^{-\frac{1}{2}}\|_{\text{op}}\|(T_{n}+\lambda I)^{-\frac{1}{2}}(O_{1}-O_{2})\|_{L^2(T \times T)}\\
        & \lesssim_{p} \|(T_{n}+\lambda I)^{-\frac{1}{2}}(O_{1}-O_{2})\|_{L^2(T \times T)},
    \end{split}
\end{align*}
where last step follows from Lemma~\ref{covariance_power_bound} and properties of the regularization family.\\

\noindent
Observe that
\begin{align*}
\begin{split}
   & O_{1}- O_{2} \\ &= \frac{1}{n}\sum_{i=1}^{n}\frac{1}{m_{i}(m_{i}-1)}\sum_{1 \leq j \neq k \leq m_{i}}[(A_{ij}+ \Delta_{ij})(A_{ik}+\Delta_{ik})-(A_{ij}A_{ik})]K^{\frac{1}{2}}((t_{ij},t_{ik}),(\cdot, \cdot))\\
    & =  \frac{1}{n}\sum_{i=1}^{n}\frac{1}{m_{i}(m_{i}-1)}\sum_{1 \leq j \neq k \leq m_{i}}[A_{ij}\Delta_{ik} + A_{ik}\Delta_{ij} + \Delta_{ij}\Delta_{ik}]K^{\frac{1}{2}}((t_{ij},t_{ik}),(\cdot, \cdot))\\
    &=  V_{1} + V_{2} + V_{3},
\end{split}
\end{align*}
where $A_{ij} = Y_{ij}-\mu_{0}(t_{ij})$ and $\Delta_{ij} = \mu_{0}(t_{ij})-\hat{\mu}_{\eta}(t_{ij}),~\forall~1\leq i\leq n,~ 1\leq j \leq m_{i}$. From the last step it follows that $$V_{1} = \frac{1}{n}\sum_{i=1}^{n}\frac{1}{m_{i}(m_{i}-1)}\sum_{1 \leq j \neq k \leq m_{i}}A_{ij}\Delta_{ik}K^{\frac{1}{2}}((t_{ij},t_{ik}),(\cdot, \cdot)),$$ $$V_{2} = \frac{1}{n}\sum_{i=1}^{n}\frac{1}{m_{i}(m_{i}-1)}\sum_{1 \leq j \neq k \leq m_{i}}A_{ik}\Delta_{ij}K^{\frac{1}{2}}((t_{ij},t_{ik}),(\cdot, \cdot)),$$ and $$V_{3} = \frac{1}{n}\sum_{i=1}^{n}\frac{1}{m_{i}(m_{i}-1)}\sum_{1 \leq j \neq k \leq m_{i}}\Delta_{ij}\Delta_{ik}K^{\frac{1}{2}}((t_{ij},t_{ik}),(\cdot, \cdot)).$$
Therefore,
\begin{align*}
    \begin{split}
\|\hat{C}_{\eta, \lambda}- C^{\text{true}}_{\lambda}\|_{L^2(T \times T)}
 & \lesssim_{p}  \underbrace{\|(\Lambda_{2}+\lambda I)^{-\frac{1}{2}}V_{1}\|_{L^2(T \times T)}}_{\text{Term-$3a$}}+\underbrace{\|(\Lambda_{2}+\lambda I)^{-\frac{1}{2}}V_{2}\|_{L^2(T \times T)}}_{\text{Term-$3b$}}\\
        & \qquad\qquad+ \underbrace{\|(\Lambda_{2}+\lambda I)^{-\frac{1}{2}}V_{3}\|_{L^2(T \times T)}}_{\text{Term-$3c$}}.
    \end{split}
\end{align*}
We now bound Term-$3a$, Term-$3b$ and Term-$3c$ as follows.\vspace{1mm}\\
\noindent
\underline{\text{Term-$3a$:}}
\begin{equation*}
    \begin{split}
     \|(\Lambda_{2}+\lambda I)^{-\frac{1}{2}}V_{1}\|_{L^2(T \times T)} = \left(\sum_{\beta}\frac{\langle\frac{1}{n}\sum_{i=1}^{n}U_{i}, \Psi_{\beta}\rangle^2_{L^2(T \times T)}}{(\xi_{\beta}+\lambda)}\right)^{\frac{1}{2}},
    \end{split}
\end{equation*}
where $U_{i} = \frac{1}{m_{i}(m_{i}-1)}\sum_{1 \leq j\neq k \leq m_{i}}A_{ij}\Delta_{ik} K^{\frac{1}{2}}((t_{ij},t_{ik}),(\cdot, \cdot))$. Then it is clear that $U_{i}$'s are i.i.d.~and $\mathbb{E}[U_{i}] = \mathbb{E}_{T}\mathbb{E}_{X|T}[U_{i}] = 0$. So applying Markov's inequality gives us that for any $t>0$,
\begin{equation*}
    \mathbb{P}\{\|(\Lambda_{2}+\lambda I)^{-\frac{1}{2}}V_{1}\|_{L^2(T \times T)} \geq t\} \leq \frac{\left(\sum_{\beta}\frac{\mathbb{E}\langle\frac{1}{n}\sum_{i=1}^{n}U_{i}, \Psi_{\beta}\rangle^2_{L^2(T \times T)}}{(\xi_{\beta}+\lambda)}\right)^{\frac{1}{2}}}{t}.
\end{equation*}

\noindent
So we consider
\begin{align*}
    \begin{split}
        &\mathbb{E}\left\langle\frac{1}{n}\sum_{i=1}^{n}U_{i}, \Psi_{\beta}\right\rangle^2_{L^2(T \times T)} =  \frac{1}{n^2}\sum_{i,i'=1}^{n}\mathbb{E}[\langle U_{i}, \Psi_{\beta} \rangle_{L^2(T \times T)}\langle U_{i'}, \Psi_{\beta} \rangle_{L^2(T \times T)}]\\
        &=  \frac{1}{n^2}\sum_{i=1}^{n} \mathbb{E}\langle U_{i}, \Psi_{\beta} \rangle^2_{L^2(T \times T)}\\
         &=  \frac{1}{n^2}\sum_{i=1}^{n} \mathbb{E}\left[\frac{1}{m_{i}^2(m_{i}-1)^2}\sum_{1\leq j \neq k \leq m_{i}}\sum_{1\leq j' \neq k' \leq m_{i}} A_{ij}\Delta_{ik}\Phi_{\beta}(t_{ij},t_{ik})A_{ij'}\Delta_{ik'}\Phi_{\beta}(t_{ij'},t_{ik'})\right].
    \end{split}
\end{align*}
We bound the above term by dividing into cases over indices.\\

\noindent
\textit{\underline{Case-1: $(\{j,k\}\cap\{j',k'\} = \emptyset)$}}
\begin{align*}
    \begin{split}
        &\mathbb{E}[(X_{i}(t_{ij})-\mu_{0}(t_{ij})+\epsilon_{ij})\Delta_{ik}\Phi_{\beta}(t_{ij},t_{ik})(X_{i}(t_{ij'})-\mu_{0}(t_{ij'})+\epsilon_{ij'})\Delta_{ik'}\Phi_{\beta}(t_{ij'},t_{ik'})]\\
    &\leq  \mathbb{E}_{X}\langle (X_{i}(\cdot)-\mu_{0}(\cdot))(\mu_{0}(\cdot\cdot)-\hat{\mu}_{\eta}(\cdot\cdot)), \Phi_{\beta}(\cdot, \cdot\cdot) \rangle^2_{L^2(T \times T)}
    \end{split}
\end{align*}

\noindent
\textit{\underline{Case-2: $((j,k) = (j',k'))$}}
\begin{align*}
    \begin{split}
        &\mathbb{E}_{X}\int_{T \times T}(X_{i}(s)-\mu_{0}(s)+\epsilon_{ij})^2 (\mu_{0}(t)-\hat{\mu}_{\eta}(t))^2 \Phi_{\beta}^2(s,t)\, ds dt\\ 
        &\lesssim  (\sup_{s \in T}\mathbb{E}_{X}X_{i}^2(s)+ \sup_{s\in T}\mu_{0}(s)+\sigma_{0}^2) \int_{T \times T} (\mu_{0}^2(t)+\hat{\mu}_{\eta}^2(t)) \Phi_{\beta}^2(s,t)\, ds dt\\
        & \lesssim  \xi_{\beta}.
    \end{split}
\end{align*}
For the last step we use the fact that $\sup_{t\in T}\mathbb{E}[X^2(t)] < \infty$ implies $\sup_{t \in T}\mu_{0}^2(t)< \infty$ and $\hat{\mu}_{\eta}$ lies in the RKHS implies $\sup_{t \in T} \hat{\mu}^2_{\eta}(t)< \infty$.\\

\noindent
\textit{\underline{Case-3: $(j= j' , k \neq k')$}}
\begin{align*}
    \begin{split}
        &\mathbb{E}[(X_{i}(t_{ij})-\mu_{0}(t_{ij})+\epsilon_{ij})\Delta_{ik}\Phi_{\beta}(t_{ij},t_{ik})(X_{i}(t_{ij})-\mu_{0}(t_{ij})+\epsilon_{ij})\Delta_{ik'}\Phi_{\beta}(t_{ij},t_{ik'})]\\
        &\leq  \mathbb{E}[(X_{i}(t_{ij})-\mu_{0}(t_{ij})+\epsilon_{ij})^2\Delta_{ik}^2\Phi_{\beta}^2(t_{ij},t_{ik})]\lesssim \xi_{\beta},
    \end{split}
\end{align*}
where the last inequality follows from Cauchy-Schwartz inequality.\\

\noindent
Putting things together, we 
obtain
\begin{align*}
    \begin{split}
       &\|(\Lambda_{2}+\lambda I)^{-\frac{1}{2}}V_{1}\|_{L^2(T \times T)} \\ & \lesssim_p  \left(\frac{1}{n^2}\sum_{i=1}^{n}\sum_{\beta} \frac{\xi_{\beta}}{(\xi_{\beta}+\lambda)}\mathbb{E}_{X}\langle (X_{i}(\cdot)-\mu_{0}(\cdot))(\mu_{0}(\cdot\cdot)-\hat{\mu}_{\eta}(\cdot\cdot)), \psi_{\beta}(\cdot, \cdot\cdot) \rangle^2_{L^2(T \times T)}\right.\\
       & \qquad \qquad \qquad \qquad \qquad + \left.\frac{1}{n^2}\sum_{i=1}^{n}\frac{1}{m_{i}}\sum_{\beta} \frac{\xi_{\beta}}{(\xi_{\beta}+\lambda)}\right)^{\frac{1}{2}}\\
        &\lesssim  \frac{1}{\sqrt{n}} + \sqrt{\frac{\mathcal{N}_{2}(\lambda)}{nm}},
    \end{split}
\end{align*}
where $m :=(\frac{1}{n}\sum_{i=1}^{n}\frac{1}{m_{i}})^{-1}$ is the harmonic mean of $\{m_{i},~1\leq i \leq n\}$.\\

\noindent
Bound for \text{Term-$3b$} is exactly same as \text{Term-$3a$}, so we move on to bound \text{Term-$3c$}.\\

\noindent
\underline{\textit{Term-$3c$:}}
\begin{align*}
    \begin{split}
        \|(\Lambda_{2}+\lambda I)^{-\frac{1}{2}}V_{3}\|_{L^2(T \times T)} &= \left(\sum_{\beta}\frac{\langle \frac{1}{n}\sum_{i=1}^{n}U_{i},\Psi_{\beta} \rangle^2_{L^2(T \times T)}}{(\xi_{\beta}+\lambda)}\right)^{\frac{1}{2}},
    \end{split}
\end{align*}
where $U_{i} = \frac{1}{m_{i}(m_{i}-1)}\sum_{1\leq j \neq k \leq m_{i}}\Delta_{ij}\Delta_{ik}K^{\frac{1}{2}}((t_{ij},t_{ik}),(\cdot,\cdot))$.\\

\noindent
By Markov's inequality, we have that for any $t>0$,
$$\mathbb{P}[\|(\Lambda_{2}+\lambda I)^{-\frac{1}{2}}V_{3}\|_{\mathbb{R}^{nm(m-1)}} \geq t] \leq \frac{\left(\sum_{\beta}\frac{\mathbb{E}\langle\frac{1}{n}\sum_{i=1}^{n}U_{i}, \Psi_{\beta}\rangle^2_{L^2(T \times T)}}{(\xi_{\beta}+\lambda)}\right)^{\frac{1}{2}}}{t}.$$

\noindent
So we consider
\begin{equation*}
\begin{split}
    \mathbb{E}\left\langle\frac{1}{n}\sum_{i=1}^{n}U_{i}, \Psi_{\beta}\right\rangle^2_{L^2(T \times T)} & =  \frac{1}{n^2}\sum_{i,i'=1}^{n}\mathbb{E}[\langle U_{i}, \Psi_{\beta}\rangle_{L^2(T \times T)} \langle U_{i'}, \Psi_{\beta}\rangle_{L^2(T \times T)} ].
\end{split}
\end{equation*}
Again, we bound the above term by dividing into cases over indices.\\

\noindent
\textit{\underline{Case-1: $(i\neq i')$}}
\begin{align*}
    \begin{split}
        & \frac{1}{n^2}\sum_{i,i'=1}^{n}\mathbb{E}[\langle U_{i}, \Psi_{\beta}\rangle_{L^2(T \times T)}]\mathbb{E}[\langle U_{i'}, \Psi_{\beta}\rangle_{L^2(T \times T)}] \leq (\mathbb{E}[\langle U_{1}, \Psi_{\beta}\rangle_{L^2(T \times T)}])^2\\
       &=  \langle (\mu_{0}(\cdot)-\hat{\mu}_{\eta}(\cdot))(\mu_{0}(-)-\hat{\mu}_{\eta}(-)), \Phi_{\beta}(\cdot,-) \rangle^2_{L^2(T \times T)}.
    \end{split}
\end{align*}

\noindent
\textit{\underline{Case-2: $(i= i')$}}
\begin{align}
\begin{split}
    & \frac{1}{n^2}\sum_{i=1}^{n}\mathbb{E}[\langle U_{i}, \Psi_{\beta}\rangle_{L^2(T \times T)}^2]\\
     &=  \frac{1}{n^2}\sum_{i=1}^{n}\mathbb{E}\left[\frac{1}{m_{i}^2(m_{i}-1)^2}\sum_{1 \leq j \neq k \leq m_{i}}\sum_{1 \leq j' \neq k' \leq m_{i}}\Delta_{ij}\Delta_{ik}\Phi_{\beta}(t_{ij},t_{ik})\Delta_{ij'}\Delta_{ik'}\Phi_{\beta}(t_{ij'},t_{ik'})\right].
\end{split}
\end{align}
By using a similar calculation as in  Term-3a, we have
\begin{align*}
    \begin{split}
        \|(\Lambda_{2}+\lambda I)^{-\frac{1}{2}}V_{3}\|_{L^2(T \times T)} &\lesssim_p \left(\sum_{\beta}\frac{\xi_{\beta}}{(\xi_{\beta}+\lambda)}\langle (\mu_{0}(\cdot)-\hat{\mu}_{\eta}(\cdot))(\mu_{0}(-)-\hat{\mu}_{\eta}(-)), \Psi_{\beta}\rangle^2_{L^2(T \times T)}\right.\\
        & \qquad+ \left.\frac{1}{n^2}\sum_{i=1}^{n}\frac{1}{m_{i}}\sum_{\beta}\frac{\xi_{\beta}}{(\xi_{\beta}+\lambda)}\right)^{\frac{1}{2}}\\
        &\lesssim  \|\mu_{0}-\hat{\mu}_{\eta}\|^2_{L^2(T)} + \sqrt{\frac{\mathcal{N}_{2}(\lambda)}{nm}}.
    \end{split}
\end{align*}

\noindent
Combining all the bounds together, we obtain
\begin{equation*}
    \|\hat{C}_{\eta,\lambda}-C_{\lambda}^{\text{true}}\|_{L^2(T \times T)} \lesssim_p \|\hat{\mu}_{\eta}-\mu_{0}\|^2_{L^2(T)} + \frac{1}{\sqrt{n}} + \sqrt{\frac{\mathcal{N}_{2}(\lambda)}{nm}}.
\end{equation*}

\noindent
\textit{Bounding \textit{Term-4}:}
Estimation of this term will be carried out separately for $0 < \alpha_{1} \leq \frac{1}{2}$ and $\alpha_{1} \geq \frac{1}{2}$ cases.\\

\noindent
\underline{\textit{Case-1:} $0 < \alpha_{1} \leq \frac{1}{2}$}
\begin{align*}
    \begin{split}
        &\|C^{\text{true}}_{\lambda}-C_{0}\|_{L^2(T \times T)}=  \|\Lambda_{2}^{\frac{1}{2}}g_{\lambda}(T_{n})O_{2}-C_{0}\|_{L^2(T \times T)}\\
        &\leq \underbrace{\|\Lambda_{2}^{\frac{1}{2}}g_{\lambda}(T_{n})(O_{2}-\Lambda_{2}^{\frac{1}{2}}C_{0})\|_{L^2(T \times T)}}_{\textit{Term-5}} + \underbrace{\|\Lambda_{2}^{\frac{1}{2}}g_{\lambda}(T_{n})\Lambda_{2}^{\frac{1}{2}}C_{0}-C_{0}\|_{L^2(T \times T)}}_{\textit{Term-6}}.
    \end{split}
\end{align*}

\noindent
\textit{Bounding \textit{Term-5}:}
\begin{align*}
    \begin{split}
        &\|\Lambda_{2}^{\frac{1}{2}}g_{\lambda}(T_{n})(O_{2}-\Lambda_{2}^{\frac{1}{2}}C_{0})\|_{L^2(T \times T)}\\ &\leq  \|(\Lambda_{2}+\lambda I)^{\frac{1}{2}}(T_{n}+\lambda I)^{-\frac{1}{2}}\|_{\text{op}}^2
          \|(T_{n}+\lambda I)^{\frac{1}{2}}g_{\lambda}(T_{n})(T_{n}+\lambda I)^{\frac{1}{2}}\|_{\text{op}}\\
        & \qquad\qquad\times \|\Lambda_{2}^{\frac{1}{2}}(\Lambda_{2}+\lambda I)^{-\frac{1}{2}}\|_{\text{op}}\|(\Lambda_{2}+\lambda I)^{-\frac{1}{2}}(O_{2}-\Lambda_{2}^{\frac{1}{2}}C_{0})\|_{L^2(T \times T)}\\
         &\stackrel{(*)}\lesssim_{p}  \|(\Lambda_{2}+\lambda I)^{-\frac{1}{2}}(O_{2}-\Lambda_{2}^{\frac{1}{2}}C_{0})\|\\
         &\lesssim_{p}  \frac{1}{\sqrt{n}} + \sqrt{\frac{\mathcal{N}_{2}(\lambda)}{nm}},
    \end{split}
\end{align*}
where $(*)$ follows from Lemma~\ref{covariance_power_bound} and properties of the regularization family, and the last step follows from Lemma~\ref{covariance_empirical}.\\

\noindent
\textit{Bounding \textit{Term-6}:}
\begin{align*}
    \begin{split}
        &\|\Lambda_{2}^{\frac{1}{2}}g_{\lambda}(T_{n})\Lambda_{2}^{\frac{1}{2}}C_{0}-C_{0}\|_{L^2(T \times T)} =  \underbrace{\|\Lambda_{2}^{\frac{1}{2}}(g_{\lambda}(T_{n})-(T_{n}+\lambda I)^{-1})\Lambda_{2}^{\frac{1}{2}}C_{0}\|_{L^2(T \times T)}}_{\text{Term-$6a$}}\\
        & \qquad + \underbrace{\|\Lambda_{2}^{\frac{1}{2}}((T_{n}+\lambda I)^{-1}-(\Lambda_{2}+\lambda I)^{-1})\Lambda_{2}^{\frac{1}{2}}C_{0}\|_{L^2(T \times T)}}_{\text{Term-$6b$}}\\
        & \qquad\qquad+ \underbrace{\|\Lambda_{2}^{\frac{1}{2}}(\Lambda_{2}+\lambda I)^{-1}\Lambda_{2}^{\frac{1}{2}}C_{0}-C_{0}\|_{L^2(T \times T)}}_{\text{Term-$6c$}}.
    \end{split}
\end{align*}

\noindent
\underline{Bound for Term-$6a$:}
\begin{align*}
    \begin{split}
        &\|\Lambda_{2}^{\frac{1}{2}}(g_{\lambda}(T_{n})-(T_{n}+\lambda I)^{-1})\Lambda_{2}^{\frac{1}{2}}C_{0}\|_{L^2(T \times T)}\\
        &=  \|\Lambda_{2}^{\frac{1}{2}}(r_{\lambda}(T_{n})+\lambda g_{\lambda}(T_{n}))(T_{n}+\lambda I)^{-1}\Lambda_{2}^{\frac{1}{2}}C_{0}\|_{L^2(T \times T)}\\
        &\lesssim \sqrt{\lambda}\|(T_{n}+\lambda I)^{-\frac{1}{2}}\Lambda_{2}^{\frac{1}{2}}C_{0}\|_{L^2(T \times T)}\\
       & \leq  \sqrt{\lambda}\|((T_{n}+\lambda I)^{-1}-(\Lambda_{2}+\lambda I)^{-1})\Lambda_{2}^{\frac{1}{2}}C_{0}\|_{L^2(T \times T)}
         + \sqrt{\lambda}\|(\Lambda_{2}+\lambda I)^{-1}\Lambda_{2}^{\frac{1}{2}}\Lambda_{2}^{\alpha_{1}}H\|_{L^2(T \times T)}\\
       & \lesssim_{p} \|(\Lambda_{2}+\lambda I)^{-\frac{1}{2}}(T_{n}-\Lambda_{2})(\Lambda_{2}+\lambda I)^{-1}\Lambda_{2}^{\frac{1}{2}}C_{0}\|_{L^2(T \times T)}+ \lambda^{\alpha_{1}}\\
        &\lesssim_{p}  \sqrt{\frac{\mathcal{N}_{2}(\lambda)}{nm}}+ \lambda^{\alpha_{1}},
    \end{split}
\end{align*}
where last step follows from Lemma~\ref{covariance_empirical_operator_with_embedding}.\\

\noindent
\underline{Bound for Term-$6b$:}
\begin{align*}
    \begin{split}
        \|\Lambda_{2}^{\frac{1}{2}}((T_{n}+\lambda I)^{-1}-&(\Lambda_{2}+\lambda I)^{-1})\Lambda_{2}^{\frac{1}{2}}C_{0}\|_{L^2(T \times T)}\\
        &=  \|\Lambda_{2}^{\frac{1}{2}}(T_{n}+\lambda I)^{-1}(\Lambda_{2}-T_{n})(\Lambda_{2}+\lambda I)^{-1}\Lambda_{2}^{\frac{1}{2}}C_{0}\|_{L^2(T \times T)}\\
        &\lesssim_{p}  \| (\Lambda_{2}+\lambda I)^{-\frac{1}{2}}(\Lambda_{2}-T_{n})(\Lambda_{2}+\lambda I)^{-1}\Lambda_{2}^{\frac{1}{2}}C_{0}\|_{L^2(T \times T)}\\
        &\lesssim_{p}  \sqrt{\frac{\mathcal{N}_{2}(\lambda)}{nm}}+ \lambda^{\alpha_{1}},
    \end{split}
\end{align*}
where last step follows from Lemma~\ref{covariance_empirical_operator_with_embedding}.\\

\noindent
\underline{Bound for Term-$6c$:}
\begin{align*}
    \begin{split}
        \|\Lambda_{2}^{\frac{1}{2}}(\Lambda_{2}+\lambda I)^{-1}\Lambda_{2}^{\frac{1}{2}}C_{0}-C_{0}\| =  \sup_{i}\left|\frac{\xi_{i}^{\alpha_{1}+1}}{\xi_{i}+\lambda}-\xi_{i}^{\alpha_{1}}\right|_{L^2(T \times T)}
        =  \sup_{i}\left|\frac{\lambda \xi_{i}^{\alpha_{1}}}{\xi_{i}+\lambda}\right| \leq \lambda^{\alpha_{1}}.
    \end{split}
\end{align*}

\noindent
Putting everything together will yield
\begin{equation*}
    \|C_{\lambda}^{\text{true}}-C_{0}\|_{L^2(T \times T)} \lesssim_{p} \sqrt{\frac{\mathcal{N}_{2}(\lambda)}{nm}}+\frac{1}{\sqrt{n}}+\lambda^{\alpha_{1}}.
\end{equation*}

\noindent
\underline{\textit{Case-2:} $ \alpha_{1} \geq \frac{1}{2}$}
    \begin{align*}
    \begin{split}
        \|C^{\text{true}}_{\lambda}-C_{0}\|_{L^2(T \times T)}&=  \|\Lambda^{\frac{1}{2}}_{2}g_{\lambda}(T_{n})O_{2}-C_{0}\|_{L^2(T \times T)}\\
        &\leq \underbrace{\|\Lambda^{\frac{1}{2}}_{2}g_{\lambda}(T_{n})(O_{2}-T_{n}\Lambda_{2}^{\alpha_{1}-\frac{1}{2}}H)\|_{L^2(T \times T)}}_{\emph{Term-7}}\\
        &\qquad \qquad \qquad \qquad  + \underbrace{\|\Lambda^{\frac{1}{2}}_{2}g_{\lambda}(T_{n})T_{n}\Lambda_{2}^{\alpha_{1}-\frac{1}{2}}H- C_{0}\|_{L^2(T \times T)}}_{\emph{Term-8}}.
    \end{split}
    \end{align*}
We now bound \emph{Term-7} and \emph{Term-8} as follows.\\

\noindent
\textit{Bounding \textit{Term-7}:}
\begin{align*}
    \begin{split}
        &\|\Lambda^{\frac{1}{2}}_{2}g_{\lambda}(T_{n})(O_{2}-T_{n}\Lambda_{2}^{\alpha_{1}-\frac{1}{2}}H)\|_{L^2(T \times T)}\\
        &\leq \|\Lambda^{\frac{1}{2}}_{2}(\Lambda_{2}+\lambda I)^{-\frac{1}{2}}\|_{\text{op}} \|(\Lambda_{2}+\lambda I)^{\frac{1}{2}}(T_{n}+\lambda I)^{-\frac{1}{2}}\|^2_{\text{op}}\\
        &\qquad\times \|(T_{n}+\lambda I)^{\frac{1}{2}}g_{\lambda}(T_{n})(T_{n}+\lambda I)^{\frac{1}{2}}\|_{\text{op}}\\
        & \qquad\qquad\times \|(\Lambda_{2}+\lambda I)^{-\frac{1}{2}}(O_{2}-T_{n}\Lambda_{2}^{\alpha_{1}-\frac{1}{2}}H)\|_{L^2(T \times T)}\\
        &\stackrel{(*)}\lesssim_{p}  \|(\Lambda_{2}+\lambda I)^{-\frac{1}{2}}(O_{2}-T_{n}\Lambda_{2}^{\alpha_{1}-\frac{1}{2}}H)\|_{L^2(T \times T)}\\
        &\lesssim_{p}  \frac{1}{\sqrt{n}} + \sqrt{\frac{\mathcal{N}_{2}(\lambda)}{nm}},
    \end{split}
\end{align*}
where $(*)$ follows from Lemma~\ref{covariance_power_bound} and properties of the regularization family, and the last step follows from Lemma~\ref{covariance_empirical}.\\

\noindent
\textit{Bounding \textit{Term-8}:}
\begin{equation*}
    \begin{split}
        &\|\Lambda^{\frac{1}{2}}_{2}g_{\lambda}(T_{n})T_{n}\Lambda_{2}^{\alpha_{1}-\frac{1}{2}}H- C_{0}\|_{L^2(T \times T)} =  \|\Lambda^{\frac{1}{2}}_{2}r_{\lambda}(T_{n})\Lambda_{2}^{\alpha_{1}-\frac{1}{2}}H\|_{L^2(T \times T)}\\
        &\leq  \|\Lambda^{\frac{1}{2}}_{2}(\Lambda^{\frac{1}{2}}_{2} + \lambda I)^{-\frac{1}{2}}\|_{\text{op}} \|(\Lambda^{\frac{1}{2}}_{2} + \lambda I)^{\frac{1}{2}}(T_{n}+ \lambda I)^{\frac{1}{2}}\|_{\text{op}}\\
          & \qquad\times \|(T_{n}+\lambda I)^{\frac{1}{2}}r_{\lambda}(T_{n})(T_{n}+\lambda I)^{\nu-\frac{1}{2}}\|_{\text{op}} \|(T_{n}+\lambda I)^{-(\nu-\frac{1}{2})}\Lambda_{2}^{\alpha_{1}-\frac{1}{2}}H\|_{L^2(T \times T)}\\
        &\stackrel{(*)}\lesssim_{p}  \lambda^{\nu} \|(T_{n}+\lambda I)^{-(\nu-\frac{1}{2})}\Lambda_{2}^{\alpha_{1}-\frac{1}{2}}H\|_{L^2(T \times T)}\\
        &\lesssim \lambda^{\nu} \|(T_{n}+\lambda I)^{-(\nu_{1}-\lfloor \nu_{1}\rfloor)}(T_{n}+\lambda I)^{-\lfloor \nu_{1}\rfloor}\Lambda_{2}^{\alpha_{1}-\frac{1}{2}}\|_{\text{op}}\\
        & \leq  \lambda^{\nu} \|(T_{n}+\lambda I)^{-(\nu_{1}-\lfloor \nu_{1}\rfloor)}(\Lambda_{2}+\lambda I)^{\nu_{1}-\lfloor \nu_{1}\rfloor}\|_{\text{op}}\\
& \qquad\times \|(\Lambda_{2}+\lambda I)^{-(\nu_{1}-\lfloor \nu_{1}\rfloor)}(T_{n}+\lambda I)^{-\lfloor \nu_{1}\rfloor}\Lambda_{2}^{\alpha_{1}-\frac{1}{2}}\|_{\text{op}}
        \end{split}
        \end{equation*}
        \begin{equation*}
        \begin{split}
 &\lesssim  \frac{\lambda^{\nu}}{\lambda^{\nu_{1}-\lfloor \nu_{1}\rfloor}}\|((T_{n}+\lambda I)^{-\lfloor \nu_{1}\rfloor}- (\Lambda_{2}+\lambda I)^{-\lfloor \nu_{1}\rfloor})\Lambda_{2}^{\alpha_{1}-\frac{1}{2}}\|_{\text{op}}
  + \lambda^{\nu}\|(\Lambda_{2}+\lambda I)^{-\nu_{1}}\Lambda_{2}^{\alpha_{1}-\frac{1}{2}}\|_{\text{op}},
\end{split}
\end{equation*}
where $\nu_{1} = \nu-\frac{1}{2}$ and $\lfloor \nu_{1}\rfloor$ is greatest integer value less than or equal to $\nu_{1}$. For $(*)$, we use Lemma~\ref{covariance_power_bound} and properties of the regularization family. Now using Lemma~\ref{reducing_power}, we get
\begin{align*}
    \begin{split}
        &\|\Lambda^{\frac{1}{2}}_{2}g_{\lambda}(T_{n})T_{n}\Lambda_{2}^{\alpha_{1}-\frac{1}{2}}H- C_{0}\|_{L^2(T \times T)} \\& \lesssim \sqrt{\lambda}\|(\Lambda_{2}+\lambda I)^{-\frac{1}{2}}(\Lambda_{2}-T_{n})(\Lambda_{2}+\lambda I)^{-\frac{1}{2}}\|+ \lambda^{\alpha_{1}}
        \lesssim_{p}  \sqrt{\frac{\mathcal{N}_{2}(\lambda)}{nm}} +\lambda^{\alpha_{1}},
    \end{split}
\end{align*}
where last step follows from Lemma~\ref{covarianve_empirical_operator_without_embedding}. Combining everything, we have
\begin{align*}\label{covariance_final_bound}
    \begin{split}
        \|\hat{C}_{\eta,\lambda} - C_{0}\|_{L^2(T \times T)} & \lesssim_{p} \|\hat{\mu}_{\eta}- \mu_{0}\|^2_{L^2(T)} +  \frac{1}{\sqrt{n}} + \sqrt{\frac{\mathcal{N}_{2}(\lambda)}{nm}} + \lambda^{\alpha_{1}}\\
        &\lesssim  \|\hat{\mu}_{\eta}- \mu_{0}\|^2_{L^2(T)} +  \frac{1}{\sqrt{n}} + \frac{\lambda^{-\frac{1}{2b_{1}}}}{\sqrt{nm}} + \lambda^{\alpha_{1}}.
    \end{split}
\end{align*}
Note that if $\alpha_{1} \geq \nu$, then
\begin{equation*}
    \|\hat{C}_{\eta,\lambda} - C_{0}\|_{L^2(T \times T)} \lesssim_{p} \|\hat{\mu}_{\eta}- \mu_{0}\|^2_{L^2(T)} +  \frac{1}{\sqrt{n}} + \frac{\lambda^{-\frac{1}{2b_{1}}}}{\sqrt{nm}} + \lambda^{\nu}.
\end{equation*}
Then by using $\lambda = (mn)^{-\frac{b_{1}}{1+2r_{1} b_{1} }}$ in above equations, we get
\begin{equation*}
    \|\hat{C}_{\eta,\lambda} - C_{0}\|_{L^2(T \times T)} \lesssim_{p} \|\hat{\mu}_{\eta}- \mu_{0}\|^2_{L^2(T)} +  \frac{1}{\sqrt{n}} + \lambda^{-\frac{r_{1}b_{1}}{1 + 2 r_{1}b_{1}}},~~~r_{1} = \min\{\alpha_{1}, \nu\}.    
\end{equation*}

\subsection{Proof of Theorem~\ref{thm:cov-lower-1}}\label{subsec:thm-cov-lower-1}
It suffices to establish the lower bound over a least-favorable Gaussian subfamily 
$\mathcal{P}_{sub} \subset \mathcal{P}$, since 
\begin{equation*}
    \inf_{\hat{C}} \sup_{(\mu,C) \in \mathcal{P}} \mathbb{P}
    \left(\|\hat{C} - C\|_{L^2(T\times T)} \geq \epsilon\right) 
    \geq 
    \inf_{\hat{C}} \sup_{(\mu,C) \in \mathcal{P}_{sub}} \mathbb{P}
    \left(\|\hat{C} - C\|_{L^2(T\times T)} \geq \epsilon\right).
\end{equation*}

\noindent
We assume that $m_{i}= m,~ \forall~ 1 \leq i \leq n$. Let us fix $\phi$ to be any continuous function on a compact domain $T$ with 
$\|\phi\|_{L^2(T)} = 1$ and let $\theta_* > 0$. Define the 
Gaussian process
\begin{equation*}
    X_\theta(t) = \theta Z \phi(t), \qquad Z \sim N(1,1), 
    \quad \theta \in [\theta_*, 2\theta_*],
\end{equation*}
and let $\mathcal{P}_{sub} = \{\mathbb{P}_\theta : \theta \in [\theta_*, 2\theta_*]\}$. 
The mean and covariance kernel of $X_\theta$ are
\begin{equation*}
    \mu_\theta(t) = \theta\phi(t), \,\,\text{and}\,\,
    C_\theta(s,t) = \theta^2\phi(s)\phi(t)
\end{equation*}
respectively. Since $\sup_{t \in T}\mathbb{E}[X_\theta^2(t)] = 
2\theta^2\|\phi\|_{L^\infty}^2 < \infty$, we have 
$\mathcal{P}_{sub} \subset \mathcal{P}$.\\

\noindent
For any 
$\theta_0, \theta_1 \in [\theta_*, 2\theta_*]$, direct computation using 
$\|\phi\|_{L^2(T)}=1$ gives
\begin{equation*}
    \|C_{\theta_0} - C_{\theta_1}\|^2_{L^2(T\times T)} 
    = (\theta_0^2-\theta_1^2)^2,
\end{equation*}
and $\|\mu_{\theta_0} - \mu_{\theta_1}\|^2_{L^2(T)} = (\theta_0 - \theta_1)^2$, 
so that
\begin{equation*}
    \|C_{\theta_0} - C_{\theta_1}\|^2_{L^2(T\times T)}  
    \geq \|\mu_{\theta_0}-\mu_{\theta_1}\|^4_{L^2(T)}.
\end{equation*}
We now establish the two 
components of the lower bound separately via Le Cam's two-point method.

\medskip
\noindent
Choose $\theta_0, \theta_1 \in [\theta_*, 2\theta_*]$ with 
$|\theta_0 - \theta_1| = \delta$ where $\delta > 0$ is to be chosen, and denote
\begin{equation*}
    d_\mu := \|\mu_{\theta_0} - \mu_{\theta_1}\|_{L^2(T)}, \,\,\text{and}\,\,
    d_C := \|C_{\theta_0} - C_{\theta_1}\|_{L^2(T\times T)} 
    \geq d^2_\mu.
\end{equation*}
Let $\mathbb{P}_{\theta}$ denote the joint law of $\{(Y_{ij},t_{ij})\}$ when $X(\cdot) = \theta Z \phi(\cdot)$. Then conditionally on $Z_{i}$ and $t_{ij}$, we have
$$(Y_{i1}, \cdots, Y_{im}) \sim \mathcal{N}(\theta Z_{i}\phi_{i}, \sigma^2 I_{m}),~\quad~ \phi_{i} = (\phi(t_{i1}),\cdots, \phi(t_{im})).$$

\noindent A direct calculation gives us
\begin{equation*}
    \mathcal{K}(\mathbb{P}_{\theta_{0}}, \mathbb{P}_{\theta_{1}}) = \frac{2nm}{\sigma^2}(\theta_{0}-\theta_{1})^2,
\end{equation*}
where $\mathcal{K}(\mathbb{P}_{\theta_{0}}, \mathbb{P}_{\theta_{1}})$ is the KL-divergence between $\mathbb{P}_{\theta_{0}}$ and $ \mathbb{P}_{\theta_{1}}$. We choose $\theta_{0}$ and $\theta_{1}$ such that $\mathcal{K}(\mathbb{P}_{\theta_{0}}, \mathbb{P}_{\theta_{1}})  <1$ and then Pinsker inequality \cite{Pinsker} gives us $\text{TV}(\mathbb{P}_{\theta_{0}}, \mathbb{P}_{\theta_{1}}) \leq \frac{1}{2}$.\\

\noindent
By Le Cam's lemma \cite{LEcam}, for any estimator $\hat{C}$,
\begin{equation*}
    \sup_{\theta \in \{\theta_{0},\theta_{1}\}} \mathbb{P}_{\theta}
    \left(\|\hat{C} - C_{\theta}\|_{L^2(T\times T)} 
    \geq c~ d_C\right) 
    \geq (1-c)(1 - \text{TV}(\mathbb{P}_{\theta_0}, 
    \mathbb{P}_{\theta_1})) 
    \geq c_{2} >0.
\end{equation*}
Since $\hat{C}$ was arbitrary, 
$d_C \gtrsim  d^2_\mu$, and taking the infimum over $\hat{\mu}$ and supremum over $(\mu,C)$, 
we obtain
\begin{equation*}
    \inf_{\hat{C}}\sup_{(\mu,C)\in\mathcal{P}} \mathbb{P}
    \left(\|\hat{C} - C\|_{L^2(T\times T)} 
    \geq \inf_{\hat{\mu}}\sup_{(\mu,C)\in\mathcal{P}}
    \|\hat{\mu}-\mu\|^2_{L^2(T)}\right)
    \geq c_2.
\end{equation*}

\medskip
\noindent
It is trivially true that estimating covariance with unknown mean is always harder than estimating with known mean. So we get the desired result.
\subsection{Proof of Theorem~\ref{thm:lower_covariance}}\label{subsec:thm-cov-lower-2}
The idea of the proof is similar to that in \cite[Theorem 6]{cai2010nonparametric}. We provide a proof for completeness since \cite[Theorem 6]{cai2010nonparametric} only handles $\alpha_1=\frac{1}{2}$. As lower bound for a specific case is enough to justify the lower bound for general case, we assume $m_{i} = m,~\forall~1 \leq i \leq n$.

Let $M = c_{0} (nm)^{\frac{1}{1+2 \alpha_{1}b_{1}}}$ for some constant $c_0>0$. For $\theta \in \{0,1\}^{M}$, define $G_{\theta} = \sum_{k=1}^{2M} \theta_{k} M^{-\frac{1}{2}} \Psi_{k+M}$, where $\{\xi_{k},\Psi_{k}\}_{k \in \mathbb{N}}$ is the set of eigenvalue-eigenvector pair for operator $\Lambda_{2}.$ . 
Note that $\|G_{\theta}\|^2_{L^2(T \times T)} = \sum_{k=1}^{M}\theta_{k}^2 M^{-1} \leq 1$. Define $C_{\theta} = \Lambda_{2}^{\alpha_{1}}G_{\theta} \in \mathcal{R}(\Lambda_{2}^{\alpha_{1}})$. Moreover, for $\theta\neq\theta'$, 
\begin{align*}
    \begin{split}
        \|C_\theta - C_{\theta'}\|_{L^2(T \times T)}^2 & =  \sum_{k=1}^{M} (\theta_{k}-\theta'_{k})^2 M^{-1} \xi_{k+M}^{2 \alpha_{1}}
        \geq  \xi_{2M}^{2 \alpha_{1}} M^{-1} \sum_{k=1}^{M} (\theta_{k}-\theta'_{k})^2\\
        & \gtrsim  M^{-2 \alpha_{1} b_{1}}
        \gtrsim  (nm)^{-\frac{2 \alpha_{1}b_{1}}{1+ 2 \alpha_{1}b_{1}}},
    \end{split}
\end{align*}
where the penultimate inequality follows from the Varshamov-Gilbert bound (Lemma~\ref{VGbound}), which ensures that there exists a subset
$\Theta\subset\{0,1\}^M$ with $|\Theta|\ge 2^{M/8}$ such that 
$H(\theta,\theta') \ge \frac{M}{8},\,\forall \theta\neq\theta'$, 
where $H(\theta, \theta')$ denotes the Hamming distance.

Under the model $Y_{ij} = X_{i}(t_{ij})+ \epsilon_{ij}$ such that $\epsilon_{ij}\sim \mathcal{N}(0, \sigma^2)$, let $\mathbb{P}_\theta$ denote the joint distribution of $\{(Y_{ij}, t_{ij})\}$ when $X \sim \mathcal{GP}(0, C_{\theta})$. Then it follows that
\begin{align*}
    \begin{split}
        \mathcal{K}(\mathbb{P}_{\theta}, \mathbb{P}_{\theta'}) &\leq  c nm \|C_{\theta}- C_{\theta'}\|_{L^2(T \times T)}^2
        =  c nm \sum_{k=1}^{M}(\theta_{k}-\theta'_{k})^2 M^{-1} \xi_{k+M}^{2 \alpha_{1}}\\
        & \leq  nm \xi_{M}^{2 \alpha_{1}} M^{-1} H(\theta, \theta')
        \lesssim  nm M^{-2 \alpha_{1} b_{1}} \lesssim M \lesssim \log N,
    \end{split}
\end{align*}
where the penultimate step follows by using $M = (nm)^{\frac{1}{1+ 2 \alpha_{1}b_{1}}}$ and the last step uses $N \geq 2^{\frac{M}{8}}$. The result, therefore, follows from  \cite[Theorem 2.5]{tsyback2009lb}.

\section*{Acknowledgments}
BKS is partially supported by the National
Science Foundation (NSF) CAREER award DMS-1945396 and NSF DMS-2413425.

\bibliographystyle{acm}  
\bibliography{References}

@article {LEcam,
    AUTHOR = {Le Cam, Lucien},
     TITLE = {Locally asymptotically normal families of distributions;
              certain approximations to families of distributions and
              their use in the theory of estimation and testing hypotheses},
   JOURNAL = {Univ. California Publ. Statist.},
  FJOURNAL = {University of California Publications in Statistics},
    VOLUME = {3},
      YEAR = {1960},
     PAGES = {37--98},
   MRCLASS = {62.15},
  MRNUMBER = {126903},
MRREVIEWER = {E.\ Parzen},
}

@article {wahba1971splinerepresenter,
    AUTHOR = {Kimeldorf, George and Wahba, Grace},
     TITLE = {Some results on {T}chebycheffian spline functions},
   JOURNAL = {J. Math. Anal. Appl.},
  FJOURNAL = {Journal of Mathematical Analysis and Applications},
    VOLUME = {33},
      YEAR = {1971},
     PAGES = {82--95},
      ISSN = {0022-247X},
   MRCLASS = {41.60 (46.00)},
  MRNUMBER = {290013},
MRREVIEWER = {L.\ Raymon},
       DOI = {10.1016/0022-247X(71)90184-3},
       URL = {https://doi.org/10.1016/0022-247X(71)90184-3},
}

@article {ramsay1991some,
    AUTHOR = {Ramsay, J. O. and Dalzell, C. J.},
     TITLE = {Some tools for functional data analysis},
   JOURNAL = {J. Roy. Statist. Soc. Ser. B},
  FJOURNAL = {Journal of the Royal Statistical Society. Series B.
              Methodological},
    VOLUME = {53},
      YEAR = {1991},
    NUMBER = {3},
     PAGES = {539--572},
      ISSN = {0035-9246},
   MRCLASS = {62H25},
  MRNUMBER = {1125714},
MRREVIEWER = {P.\ A. K. Covey-Crump},
       URL =
              {http://links.jstor.org/sici?sici=0035-9246(1991)53:3<539:STFFDA>2.0.CO;2-W&origin=MSN},
}

@article {cuevas2002,
    AUTHOR = {Cuevas, Antonio and Febrero, Manuel and Fraiman, Ricardo},
     TITLE = {Linear functional regression: The case of fixed design and
              functional response},
   JOURNAL = {Canad. J. Statist.},
  FJOURNAL = {The Canadian Journal of Statistics. La Revue Canadienne de
              Statistique},
    VOLUME = {30},
      YEAR = {2002},
    NUMBER = {2},
     PAGES = {285--300},
      ISSN = {0319-5724,1708-945X},
   MRCLASS = {62G08 (62J02)},
  MRNUMBER = {1926066},
MRREVIEWER = {Walter\ Schlee},
       DOI = {10.2307/3315952},
       URL = {https://doi.org/10.2307/3315952},
}

@article {clustering_2003_fd,
    AUTHOR = {James, Gareth M. and Sugar, Catherine A.},
     TITLE = {Clustering for sparsely sampled functional data},
   JOURNAL = {J. Amer. Statist. Assoc.},
  FJOURNAL = {Journal of the American Statistical Association},
    VOLUME = {98},
      YEAR = {2003},
    NUMBER = {462},
     PAGES = {397--408},
      ISSN = {0162-1459,1537-274X},
   MRCLASS = {62H30},
  MRNUMBER = {1995716},
MRREVIEWER = {Kalev\ P\"arna},
       DOI = {10.1198/016214503000189},
       URL = {https://doi.org/10.1198/016214503000189},
}

@article {yao2005functional,
    AUTHOR = {Yao, Fang and M\"{u}ller, Hans-Georg and Wang, Jane-Ling},
     TITLE = {Functional linear regression analysis for longitudinal data},
   JOURNAL = {Ann. Statist.},
  FJOURNAL = {The Annals of Statistics},
    VOLUME = {33},
      YEAR = {2005},
    NUMBER = {6},
     PAGES = {2873--2903},
      ISSN = {0090-5364,2168-8966},
   MRCLASS = {62M20 (60G15 62G05)},
  MRNUMBER = {2253106},
       DOI = {10.1214/009053605000000660},
       URL = {https://doi.org/10.1214/009053605000000660},
}

@article {muller2005generalized,
    AUTHOR = {M\"{u}ller, Hans-Georg and Stadtm\"{u}ller, Ulrich},
     TITLE = {Generalized functional linear models},
   JOURNAL = {Ann. Statist.},
  FJOURNAL = {The Annals of Statistics},
    VOLUME = {33},
      YEAR = {2005},
    NUMBER = {2},
     PAGES = {774--805},
      ISSN = {0090-5364,2168-8966},
   MRCLASS = {62G05 (62G20 62H30 62M09)},
  MRNUMBER = {2163159},
MRREVIEWER = {Arnak\ S.\ Dalalyan},
       DOI = {10.1214/009053604000001156},
       URL = {https://doi.org/10.1214/009053604000001156},
}

@article {hall_2006_principle_compo,
    AUTHOR = {Hall, Peter and M\"uller, Hans-Georg and Wang, Jane-Ling},
     TITLE = {Properties of principal component methods for functional and
              longitudinal data analysis},
   JOURNAL = {Ann. Statist.},
  FJOURNAL = {The Annals of Statistics},
    VOLUME = {34},
      YEAR = {2006},
    NUMBER = {3},
     PAGES = {1493--1517},
      ISSN = {0090-5364,2168-8966},
   MRCLASS = {62G08 (62H25 62M09)},
  MRNUMBER = {2278365},
MRREVIEWER = {Stephan\ Morgenthaler},
       DOI = {10.1214/009053606000000272},
       URL = {https://doi.org/10.1214/009053606000000272},
}

@article {devito,
    AUTHOR = {Caponnetto, A. and De Vito, E.},
     TITLE = {Optimal rates for the regularized least-squares algorithm},
   JOURNAL = {Found. Comput. Math.},
  FJOURNAL = {Foundations of Computational Mathematics. The Journal of the
              Society for the Foundations of Computational Mathematics},
    VOLUME = {7},
      YEAR = {2007},
    NUMBER = {3},
     PAGES = {331--368},
      ISSN = {1615-3375,1615-3383},
   MRCLASS = {68T05 (62J02 65K10 93E24)},
  MRNUMBER = {2335249},
       DOI = {10.1007/s10208-006-0196-8},
       URL = {https://doi.org/10.1007/s10208-006-0196-8},
}

@article {zhang_2007_statstical_inference,
    AUTHOR = {Zhang, Jin-Ting and Chen, Jianwei},
     TITLE = {Statistical inferences for functional data},
   JOURNAL = {Ann. Statist.},
  FJOURNAL = {The Annals of Statistics},
    VOLUME = {35},
      YEAR = {2007},
    NUMBER = {3},
     PAGES = {1052--1079},
      ISSN = {0090-5364,2168-8966},
   MRCLASS = {62G07 (62G10 62J12)},
  MRNUMBER = {2341698},
       DOI = {10.1214/009053606000001505},
       URL = {https://doi.org/10.1214/009053606000001505},
}

@article{cai2010nonparametric,
  title={Nonparametric covariance function estimation for functional and longitudinal data, Technical Report},
  author={T. Tony Cai and Ming Yuan},
 JOURNAL = {Univ. Pennsylvania, Philadelphia, PA}
}

@article {hsing_2010_non_parametric,
    AUTHOR = {Li, Yehua and Hsing, Tailen},
     TITLE = {Uniform convergence rates for nonparametric regression and
              principal component analysis in functional/longitudinal data},
   JOURNAL = {Ann. Statist.},
  FJOURNAL = {The Annals of Statistics},
    VOLUME = {38},
      YEAR = {2010},
    NUMBER = {6},
     PAGES = {3321--3351},
      ISSN = {0090-5364,2168-8966},
   MRCLASS = {62J05 (60F15 62G08 62G20 62H25 62M20)},
  MRNUMBER = {2766854},
MRREVIEWER = {Daniela\ Rodriguez},
       DOI = {10.1214/10-AOS813},
       URL = {https://doi.org/10.1214/10-AOS813},
}

@article {Cai_2011_mean,
    AUTHOR = {Cai, T. Tony and Yuan, Ming},
     TITLE = {Optimal estimation of the mean function based on discretely
              sampled functional data: {P}hase transition},
   JOURNAL = {Ann. Statist.},
  FJOURNAL = {The Annals of Statistics},
    VOLUME = {39},
      YEAR = {2011},
    NUMBER = {5},
     PAGES = {2330--2355},
      ISSN = {0090-5364,2168-8966},
   MRCLASS = {62G05 (62G08)},
  MRNUMBER = {2906870},
MRREVIEWER = {Mounir\ Arfi},
       DOI = {10.1214/11-AOS898},
       URL = {https://doi.org/10.1214/11-AOS898},
}

@article{wang2016functional,
  title={Functional data analysis},
  author={Wang, Jane-Ling and Chiou, Jeng-Min and M{\"u}ller, Hans-Georg},
  journal={Annual Review of Statistics and its Application},
  volume={3},
  pages={257--295},
  year={2016},
  publisher={Annual Reviews}
}

@article {Wang&zhang_2016_sparse_to_dense,
    AUTHOR = {Zhang, Xiaoke and Wang, Jane-Ling},
     TITLE = {From sparse to dense functional data and beyond},
   JOURNAL = {Ann. Statist.},
  FJOURNAL = {The Annals of Statistics},
    VOLUME = {44},
      YEAR = {2016},
    NUMBER = {5},
     PAGES = {2281--2321},
      ISSN = {0090-5364,2168-8966},
   MRCLASS = {62G20 (62G05 62G08)},
  MRNUMBER = {3546451},
       DOI = {10.1214/16-AOS1446},
       URL = {https://doi.org/10.1214/16-AOS1446},
}

@article {Medina_fixed_design,
    AUTHOR = {Ruiz-Medina, M. D.},
     TITLE = {Functional analysis of variance for {H}ilbert-valued
              multivariate fixed effect models},
   JOURNAL = {Statistics},
  FJOURNAL = {Statistics. A Journal of Theoretical and Applied Statistics},
    VOLUME = {50},
      YEAR = {2016},
    NUMBER = {3},
     PAGES = {689--715},
      ISSN = {0233-1888,1029-4910},
   MRCLASS = {60G15 (62H10 62H15 62J10)},
  MRNUMBER = {3506664},
MRREVIEWER = {Jos\'e\ R.\ Berrendero},
       DOI = {10.1080/02331888.2015.1094069},
       URL = {https://doi.org/10.1080/02331888.2015.1094069},
}

@article {wang&zhang_2018_longitudinal_and_functional,
    AUTHOR = {Zhang, Xiaoke and Wang, Jane-Ling},
     TITLE = {Optimal weighting schemes for longitudinal and functional
              data},
   JOURNAL = {Statist. Probab. Lett.},
  FJOURNAL = {Statistics \& Probability Letters},
    VOLUME = {138},
      YEAR = {2018},
     PAGES = {165--170},
      ISSN = {0167-7152,1879-2103},
   MRCLASS = {62G20 (62G05 62G08)},
  MRNUMBER = {3788733},
       DOI = {10.1016/j.spl.2018.03.007},
       URL = {https://doi.org/10.1016/j.spl.2018.03.007},
}

@article {Wong&Zhand_2019_covariance,
    AUTHOR = {Wong, Raymond K. W. and Zhang, Xiaoke},
     TITLE = {Nonparametric operator-regularized covariance function
              estimation for functional data},
   JOURNAL = {Comput. Statist. Data Anal.},
  FJOURNAL = {Computational Statistics \& Data Analysis},
    VOLUME = {131},
      YEAR = {2019},
     PAGES = {131--144},
      ISSN = {0167-9473,1872-7352},
   MRCLASS = {62G05 (62G20)},
  MRNUMBER = {3906800},
MRREVIEWER = {Zhenghui\ Feng},
       DOI = {10.1016/j.csda.2018.05.013},
       URL = {https://doi.org/10.1016/j.csda.2018.05.013},
}

@article {Steinwart_2020_sobolev_norm,
    AUTHOR = {Fischer, Simon and Steinwart, Ingo},
     TITLE = {Sobolev norm learning rates for regularized least-squares
              algorithms},
   JOURNAL = {J. Mach. Learn. Res.},
  FJOURNAL = {Journal of Machine Learning Research (JMLR)},
    VOLUME = {21},
      YEAR = {2020},
     PAGES = {1--38},
      ISSN = {1532-4435,1533-7928},
   MRCLASS = {62G08 (62G20)},
  MRNUMBER = {4209491},
MRREVIEWER = {Yongdai\ Kim},
}

@article {wang_2022_low_rank,
    AUTHOR = {Wang, Jiayi and Wong, Raymond K. W. and Zhang, Xiaoke},
     TITLE = {Low-rank covariance function estimation for multidimensional
              functional data},
   JOURNAL = {J. Amer. Statist. Assoc.},
  FJOURNAL = {Journal of the American Statistical Association},
    VOLUME = {117},
      YEAR = {2022},
    NUMBER = {538},
     PAGES = {809--822},
      ISSN = {0162-1459,1537-274X},
   MRCLASS = {99-01},
  MRNUMBER = {4436314},
       DOI = {10.1080/01621459.2020.1820344},
       URL = {https://doi.org/10.1080/01621459.2020.1820344},
}

@article {gupta2025optimal,
    AUTHOR = {Naveen Gupta and S. Sivananthan and Bharath K. Sriperumbudur},
     TITLE = {Optimal rates for functional linear regression with general regularization},
   JOURNAL = {Applied and Computational Harmonic Analysis},
    VOLUME = {76},
      YEAR = {2025},
     PAGES = {101745},
      ISSN = {1063-5203},
       DOI = {https://doi.org/10.1016/j.acha.2024.101745},
       URL = {https://www.sciencedirect.com/science/article/pii/S1063520324001222},
}

@book {Pinsker,
    AUTHOR = {Pinsker, M. S.},
     TITLE = {Information and {I}nformation {S}tability of {R}andom {V}ariables and
              {P}rocesses},
    EDITOR = {Feinstein, Amiel},
 PUBLISHER = {Holden-Day, Inc., San Francisco},
      YEAR = {1964},
     PAGES = {xii+243},
   MRCLASS = {94.20},
  MRNUMBER = {213190},
}

@book {cordes1987,
    AUTHOR = {Cordes, H. O.},
     TITLE = {Spectral Theory of Linear Differential Operators and
              Comparison Algebras},
    SERIES = {London Mathematical Society Lecture Note Series},
    VOLUME = {76},
 PUBLISHER = {Cambridge University Press, Cambridge},
      YEAR = {1987},
     PAGES = {x+342},
      ISBN = {0-521-28443-0},
   MRCLASS = {35Pxx (35S05 46L99 47E05 47F05 47G05 58G05 58G12)},
  MRNUMBER = {890743},
MRREVIEWER = {V.\ S.\ Rabinovich},
       DOI = {10.1017/CBO9780511662836},
       URL = {https://doi.org/10.1017/CBO9780511662836},
}

@book {englmartinbook,
    AUTHOR = {Engl, Heinz W. and Hanke, Martin and Neubauer, Andreas},
     TITLE = {Regularization of {I}nverse {P}roblems},
    SERIES = {Mathematics and its Applications},
    VOLUME = {375},
 PUBLISHER = {Kluwer Academic Publishers Group, Dordrecht},
      YEAR = {1996},
     PAGES = {viii+321},
      ISBN = {0-7923-4157-0},
   MRCLASS = {65J20 (35R30 47A50 47H17)},
  MRNUMBER = {1408680},
MRREVIEWER = {Ulrich\ Tautenhahn},
}

@book {ramsay2002afda,
    AUTHOR = {Ramsay, J. O. and Silverman, B. W.},
     TITLE = {Applied Functional Data Analysis},
    SERIES = {Springer Series in Statistics},
 PUBLISHER = {Springer-Verlag, New York},
      YEAR = {2002},
     PAGES = {x+190},
      ISBN = {0-387-95414-7},
   MRCLASS = {62-02 (62G07 62H25 62H30)},
  MRNUMBER = {1910407},
       DOI = {10.1007/b98886},
       URL = {https://doi.org/10.1007/b98886},
}

@book {tsyback2009lb,
    AUTHOR = {Tsybakov, Alexandre B.},
     TITLE = {Introduction to Nonparametric Estimation},
    SERIES = {Springer Series in Statistics},
 PUBLISHER = {Springer, New York},
      YEAR = {2009},
     PAGES = {xii+214},
      ISBN = {978-0-387-79051-0},
   MRCLASS = {62-01 (62G05 62G07 62G08 62G20)},
  MRNUMBER = {2724359},
       DOI = {10.1007/b13794},
       URL = {https://doi.org/10.1007/b13794},
}

@book {sergei2013,
    AUTHOR = {Lu, Shuai and Pereverzev, Sergei V.},
     TITLE = {Regularization Theory for Ill-posed Problems},
    SERIES = {Inverse and Ill-posed Problems Series},
    VOLUME = {58},
 PUBLISHER = {De Gruyter, Berlin},
      YEAR = {2013},
     PAGES = {xiv+289},
      ISBN = {978-3-11-028646-5; 978-3-11-028649-6},
   MRCLASS = {35-02 (35B65 35R25 35R30 60H30 62F15 62G05 65J20)},
  MRNUMBER = {3114700},
       DOI = {10.1515/9783110286496},
       URL = {https://doi.org/10.1515/9783110286496},
}

@book {Theoreticalfoundationshsing2015,
    AUTHOR = {Hsing, Tailen and Eubank, Randall},
     TITLE = {Theoretical Foundations of Functional Data Analysis, with an
              Introduction to Linear Operators},
    SERIES = {Wiley Series in Probability and Statistics},
 PUBLISHER = {John Wiley \& Sons, Ltd., Chichester},
      YEAR = {2015},
     PAGES = {xiv+334},
      ISBN = {978-0-470-01691-6},
   MRCLASS = {62-02 (46-01 47-01 60G17 62G05 62H20 62H25 62J05)},
  MRNUMBER = {3379106},
MRREVIEWER = {David\ Benner\ Hitchcock},
       DOI = {10.1002/9781118762547},
       URL = {https://doi.org/10.1002/9781118762547},
}

@book {paulsen2016rkhs,
    AUTHOR = {Paulsen, Vern I. and Raghupathi, Mrinal},
     TITLE = {An Introduction to the Theory of Reproducing Kernel {H}ilbert
              Spaces},
    SERIES = {Cambridge Studies in Advanced Mathematics},
    VOLUME = {152},
 PUBLISHER = {Cambridge University Press, Cambridge},
      YEAR = {2016},
     PAGES = {x+182},
      ISBN = {978-1-107-10409-9},
   MRCLASS = {46-02 (46C05)},
  MRNUMBER = {3526117},
       DOI = {10.1017/CBO9781316219232},
       URL = {https://doi.org/10.1017/CBO9781316219232},
}

@book {kokoszka2017,
    AUTHOR = {Kokoszka, Piotr and Reimherr, Matthew},
     TITLE = {Introduction to Functional Data Analysis},
    SERIES = {Texts in Statistical Science Series},
 PUBLISHER = {CRC Press, Boca Raton, FL},
      YEAR = {2017},
     PAGES = {xvi+290},
      ISBN = {978-1-4987-4634-2},
   MRCLASS = {62-01 (62Gxx 62H25 62Jxx 62M10 62M30)},
  MRNUMBER = {3793167},
MRREVIEWER = {David\ Benner\ Hitchcock},
}

\appendix
\section{Supplementary Results: Mean Function Estimation}\label{app:mean}
\numberwithin{equation}{section}
In this section, we present supplementary results that are needed to prove Theorem~\ref{thm:mean}.
\begin{lemma}\label{mean_empirical_estimation}
    Under Assumptions~\ref{mean_SRC} and \ref{mean_moment}, 
    we have
    $$\|(\Lambda_{1}+\lambda I)^{-\frac{1}{2}}(V_{1}-\Lambda_{1}^{\frac{1}{2}}\mu_{0})\|_{L^2(T)} \lesssim_p \sqrt{\frac{\mathcal{N}_{1}(\lambda)}{nm}}+\sqrt{\frac{1}{n}},$$
    and for $\alpha \geq \frac{1}{2}$, we have
    $$\|(\Lambda_{1}+\lambda I)^{-\frac{1}{2}}(V_{1}-\hat{A}_{n}\Lambda_{1}^{\alpha-\frac{1}{2}}h)\|_{L^2(T)} \lesssim_p \sqrt{\frac{\mathcal{N}_{1}(\lambda)}{nm}}+\sqrt{\frac{1}{n}}.$$
\end{lemma}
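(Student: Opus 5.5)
We control the second moment of the $L^2(T)$-valued random element inside the norm and then apply Markov's inequality, as in the proof of Theorem~\ref{thm:covariance} (Term-3a). Write $\phi_\beta=\sqrt{\lambda_\beta}\psi_\beta$, so that $k^{\frac12}(t,\cdot)=\sum_\beta \phi_\beta(t)\psi_\beta$ and $\langle k^{\frac12}(t,\cdot),\psi_\beta\rangle_{L^2(T)}=\phi_\beta(t)$.

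For the first inequality, note that $\Lambda_1^{\frac12}\mu_0=\mathbb{E}[\mu_0(t)k^{\frac12}(t,\cdot)]$ for $t$ distributed according to the sampling distribution (uniform, up to normalisation). Hence
\[
V_1-\Lambda_1^{\frac12}\mu_0=\frac1n\sum_{i=1}^n U_i,\qquad U_i=\frac1{m_i}\sum_{j=1}^{m_i}\Big(Y_{ij}k^{\frac12}(t_{ij},\cdot)-\Lambda_1^{\frac12}\mu_0\Big).
\]
The $U_i$ are independent and centred, because $\mathbb{E}[Y_{ij}\mid t_{ij}]=\mu_0(t_{ij})$. Therefore
\[
\mathbb{E}\|(\Lambda_1+\lambda I)^{-\frac12}(V_1-\Lambda_1^{\frac12}\mu_0)\|^2_{L^2(T)}=\frac1{n^2}\sum_i\sum_\beta\frac{\mathbb{E}\langle U_i,\psi_\beta\rangle^2_{L^2(T)}}{\lambda_\beta+\lambda}.
\]
We expand $\langle U_i,\psi_\beta\rangle=\frac1{m_i}\sum_j\big(Y_{ij}\phi_\beta(t_{ij})-\langle\mu_0,\phi_\beta\rangle\big)$ and square it, splitting the double sum over $(j,j')$ into diagonal and off-diagonal parts.

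\emph{Diagonal terms} ($j=j'$). These are bounded by $\frac1{m_i}\mathbb{E}[Y_{ij}^2\phi_\beta^2(t_{ij})]\le \frac1{m_i}\big(\sup_s\mathbb{E}X^2(s)+\sigma_0^2\big)\lambda_\beta$, using Assumption~\ref{mean_moment} and $\|\psi_\beta\|_{L^2}=1$. Summing over $\beta$ with the weight $(\lambda_\beta+\lambda)^{-1}$ gives $\mathcal{N}_1(\lambda)/m_i$. Averaging over $i$ then produces $\mathcal{N}_1(\lambda)/(nm)$, since $m$ is the harmonic mean.

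\emph{Off-diagonal terms} ($j\neq j'$). Conditioning on $X_i$, the observation points $t_{ij},t_{ij'}$ and the noise are independent. Each factor therefore has conditional mean $\langle X_i-\mu_0,\phi_\beta\rangle_{L^2}$, and these terms contribute at most $\mathbb{E}\langle X_i-\mu_0,\phi_\beta\rangle^2_{L^2}$. Summing over $\beta$ gives
\[
\sum_\beta \frac{\lambda_\beta}{\lambda_\beta+\lambda}\,\mathbb{E}\langle X-\mu_0,\psi_\beta\rangle^2\le \mathbb{E}\|X-\mu_0\|^2_{L^2(T)}<\infty,
\]
which is finite by Assumption~\ref{mean_moment} and compactness of $T$. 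After the $1/n^2$ normalisation this contributes $1/n$.

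Markov's inequality then yields $\lesssim_p\sqrt{\mathcal{N}_1(\lambda)/(nm)}+\sqrt{1/n}$.

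For the second inequality ($\alpha\ge\frac12$), we use $\mu_0=\Lambda_1^{\frac12}\Lambda_1^{\alpha-\frac12}h$, so that $\mu_0(t)=\langle \Lambda_1^{\alpha-\frac12}h,k^{\frac12}(t,\cdot)\rangle_{L^2}$. Consequently
\[
\hat A_n\Lambda_1^{\alpha-\frac12}h=\frac1n\sum_i\frac1{m_i}\sum_j\mu_0(t_{ij})k^{\frac12}(t_{ij},\cdot).
\]
Hence $V_1-\hat A_n\Lambda_1^{\alpha-\frac12}h=\frac1n\sum_i\frac1{m_i}\sum_j (Y_{ij}-\mu_0(t_{ij}))k^{\frac12}(t_{ij},\cdot)$, which is again a sum of independent centred terms, now with $A_{ij}=X_i(t_{ij})-\mu_0(t_{ij})+\epsilon_{ij}$. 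The same diagonal/off-diagonal computation applies verbatim: the diagonal part gives $\mathcal{N}_1(\lambda)/(nm)$, and the off-diagonal part gives $\frac1n\sum_\beta \frac{\lambda_\beta}{\lambda_\beta+\lambda}\mathbb{E}\langle X-\mu_0,\psi_\beta\rangle^2\lesssim \frac1n$. This version is in fact cleaner, since no centring constant appears.

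The main obstacle is the off-diagonal (within-curve) correlation in the first bound. It must be organised carefully: conditionally on $X_i$, the centred summands $Y_{ij}\phi_\beta(t_{ij})-\langle\mu_0,\phi_\beta\rangle$ have a nonzero common mean $\langle X_i-\mu_0,\phi_\beta\rangle$. This is what yields the $1/n$ (parametric) term rather than a term decaying in $m$. The Hilbert–Schmidt-type summability of $\sum_\beta \mathbb{E}\langle X-\mu_0,\psi_\beta\rangle^2$ is what keeps that term free of $\lambda$. Apart from this, the argument is only pointwise second-moment control using $\sup_s\mathbb{E}X^2(s)<\infty$.
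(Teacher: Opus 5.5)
Your proposal is correct and follows essentially the paper's argument: a second-moment bound via Chebyshev, using independence across curves and the eigen-expansion in $\psi_\beta$, where the within-point (diagonal) terms give $\mathcal{N}_1(\lambda)/(nm)$ through the harmonic mean and the within-curve cross terms give the $\lambda$-free parametric $1/n$. The only difference is organisational: you condition on $X_i$ and split the square directly, while the paper applies the law of total variance conditional on the design points (Term-I/Term-II) and then makes the same diagonal/off-diagonal split inside Term-II. Your route yields the centred quantity $\mathbb{E}\|X-\mu_0\|^2_{L^2(T)}$ where the paper has $\mathbb{E}\|X\|^2_{L^2(T)}$, which makes no difference to the bound.
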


\begin{proof}
We start with 
\begin{equation*}
    \begin{split}
        \|(\Lambda_{1}+\lambda I)^{-\frac{1}{2}}(V_{1}-\Lambda_{1}^{\frac{1}{2}}\mu_{0})\|_{L^2(T)} = \left\|\frac{1}{n}\sum_{i=1}^{n}Z_{i}\right\|_{L^2(T)},
    \end{split}
\end{equation*}
    where $Z_{i} = (\Lambda_{1}+\lambda I)^{-\frac{1}{2}}U_{i}$ and $U_{i}:=\frac{1}{m_{i}}\sum_{j=1}^{m_{i}}(Y_{ij}k^{\frac{1}{2}}(t_{ij}, \cdot)-\Lambda_{1}^{\frac{1}{2}}\mu_{0})$. We remind the reader that the form of $V_{1}$ is defined in Proposition~\ref{mean_alternate_estimator}. Now using Chebyshev's  inequality, for any $t>0$, we obtain  
\begin{equation}\label{mean_chebyshev}
    \mathbb{P}\left(\left\|\frac{1}{n}\sum_{i=1}^{n}Z_{i}\right\|_{L^2(T)}\geq t\right) \leq \frac{\mathbb{E}\left\|\frac{1}{n}\sum_{i=1}^{n}Z_{i}\right\|_{L^2(T)}^2}{t^2}.
\end{equation}
Since $\{Z_{i}\}_{i=1}^{n}$ are i.i.d and $\mathbb{E}[Z_{i}]= 0$, we have
\begin{equation*}
    \mathbb{E}\left\|\frac{1}{n}\sum_{i=1}^{n}Z_{i}\right\|_{L^2(T)}^2 = \frac{1}{n^2}\sum_{i=1}^{n}\mathbb{E}\|Z_{i}\|_{L^2(T)}^2 = \frac{1}{n^2}\sum_{i=1}^{n}\sum_{k}\frac{\mathbb{E}\langle U_{i}, \psi_{k}\rangle^2_{L^2}}{(\lambda_{k}+\lambda)}.
\end{equation*}

\noindent
Observe that
$\mathbb{E}\langle U_{i}, \psi_{k} \rangle_{L^2(T)} = \langle \Lambda_{1}^{\frac{1}{2}}\mu_{0}-\Lambda_{1}^{\frac{1}{2}}\mu_{0}, \psi_{k}\rangle_{L^2(T)} = 0$. Combining this observation with the identity $\sqrt{\lambda_{k}}\psi_{k} = \phi_{k},~k \in \mathbb{N}$, if follows that 
\begin{align*}
    \mathbb{E}\langle U_{i}, \psi_{k} \rangle^2_{L^2(T)} &=  \text{Var}[\langle U_{i}, \psi_{k} \rangle_{L^2(T)}] =  \frac{1}{m_{i}^2}\text{Var}\left[\sum_{j=1}^{m_{i}}(Y_{ij}\phi_{k}(t_{ij})-\langle \Lambda^{\frac{1}{2}}\mu_{0}, \psi_{k}\rangle_{L^2(T)})\right]\\
    &=  \frac{1}{m_{i}^2}\underbrace{\text{Var}_{T}\left[\mathbb{E}_{X|T}\left[\sum_{j=1}^{m_{i}}\left(Y_{ij}\phi_{k}(t_{ij})-\langle \Lambda^{\frac{1}{2}}\mu_{0}, \psi_{k}\rangle_{L^2(T)}\right)\Big |T\right]\right]}_{\text{Term-I}}
    \end{align*}
    \begin{align*}
    & \qquad+ \frac{1}{m_{i}^2}\underbrace{\mathbb{E}_{T}\left[\text{Var}_{X|T}\left[\sum_{j=1}^{m_{i}}\left(Y_{ij}\phi_{k}(t_{ij})-\langle \Lambda^{\frac{1}{2}}\mu_{0}, \psi_{k}\rangle_{L^2(T)}\right)\Big|T\right]\right]}_{\text{Term-II}}.
\end{align*}
\noindent
\underline{Bound for Term-I:}
\begin{align*}
    &\text{Var}_{T}\left[\mathbb{E}_{X|T}\left[\sum_{j=1}^{m_{i}}(Y_{ij}\phi_{k}(t_{ij})-\langle \Lambda^{\frac{1}{2}}\mu_{0}, \psi_{k}\rangle_{L^2(T)})\Big{|}T\right]\right] \\
    &=  \text{Var}_{T}\left(\sum_{j=1}^{m_{i}}(\mu_{0}(t_{ij})\phi_{k}(t_{ij})-\langle \Lambda^{\frac{1}{2}}\mu_{0}, \psi_{k}\rangle_{L^2(T)})\right)\\
    &\leq \mathbb{E}_{T}\left(\sum_{j=1}^{m_{i}}(\mu_{0}(t_{ij})\phi_{k}(t_{ij})-\langle \Lambda^{\frac{1}{2}}\mu_{0}, \psi_{k}\rangle_{L^2(T)})\right)^2
    =  m_{i}\int_{T} \mu_{0}^2(t) \phi_{k}^2(t)dt\lesssim m_{i}\lambda_k,
\end{align*}
where the last inequality follows from the assumption $\sup_{t \in T}\mathbb{E}X^2(t) < \infty$. 
\vspace{1mm}\\

\noindent
\underline{Bound for Term-II:}
\begin{align}\label{mean_term_II}
    \begin{split}
    &\mathbb{E}_{T}\left[\text{Var}_{X|T}\left[\sum_{j=1}^{m_{i}}(Y_{ij}\phi_{k}(t_{ij})-\langle \Lambda^{\frac{1}{2}}\mu_{0}, \psi_{k}\rangle_{L^2(T)})\Big{|}T\right]\right]\\
    &\leq  \mathbb{E}_{T,X}\left[\sum_{j,k=1}^{m_{i}}Y_{ij}Y_{ik}\phi_{k}(t_{ij})\phi_{k}(t_{ik})\right]\\
         & =  \mathbb{E}_{T}\left[\sum_{j,k=1}^{m_{i}}\left(\mathbb{E}_{X|T}\left[X_{i}(t_{ij})X_{i}(t_{ik})\Big{|}T\right]+\sigma^2_{0} \delta_{jk}\right)\phi_{k}(t_{ij})\phi_{k}(t_{ik})\right]\\
         &\leq  m_{i}(m_{i}-1) \int_{T \times T}  \mathbb{E}_{X}(X_{i}(s)X_{i}(t))\phi_{k}(t)\phi_{k}(s)dsdt + m_{i} \sigma^2_{0} \lambda_{k}\\
         & \qquad\qquad+ m_{i} \int_{T}(\mathbb{E}_{X}(X_{1}^2(t))+\sigma_{0}^2)\phi_{k}^2(t)dt\\
         & \lesssim m_{i}^2\lambda_k\mathbb{E}\langle X,\psi_k\rangle^2_{L^2(T)} + m_{i}\lambda_k,
    \end{split}
\end{align}
where we used the assumption $\sup_{t \in T}\mathbb{E}X^2(t) < \infty$ in the third term on the penultimate line. Putting all things together, we get

\begin{align}\label{mean_variance_bound}
    \begin{split}
        \mathbb{E}\left\|\frac{1}{n}\sum_{i=1}^{n}Z_{i}\right\|^2_{L^2(T)} & \lesssim \frac{1}{n}\sum_{i=1}^{n}\frac{1}{m_{i}}\sum_{k}\frac{\lambda_{k}}{\lambda_{k}+\lambda} +\frac{1}{n^2}\sum_{i=1}^{n}\sum_{k} \frac{\lambda_{k}}{\lambda_{k}+\lambda} \mathbb{E}\langle X, \psi_{k}\rangle^2_{L^2(T)}\\
        &\leq  \frac{\mathcal{N}_{1}(\lambda)}{nm} + \frac{\mathbb{E}\|X\|_{L^2(T)}^2}{n}.
    \end{split}
\end{align}
Hence, the result follows from (\ref{mean_chebyshev}) and (\ref{mean_variance_bound}). \\

\noindent
For $\|(\Lambda_{1}+\lambda I)^{-\frac{1}{2}}(V_{1}-\hat{A}_{n}\Lambda_{1}^{\alpha-\frac{1}{2}}h)\|_{L^2(T)} = \left\|\frac{1}{n}\sum_{i=1}^{n}Z_{i}\right\|_{L^2(T)}$, take $Z_{i} = (\Lambda+\lambda I)^{-\frac{1}{2}}U_{i}$ where  $U_{i} = \frac{1}{m_{i}}\sum_{j=1}^{m_{i}}(Y_{ij}-\mu_{0}(t_{ij}))k^{\frac{1}{2}}(t_{ij}, \cdot)$, where we note that $$\hat{A}_{n} \Lambda_{1}^{\alpha-\frac{1}{2}}h = \frac{1}{n}\sum_{i=1}^{n}\frac{1}{m_{i}}\sum_{j=1}^{m_{i}}\mu_{0}(t_{ij})k^{\frac{1}{2}}(t_{ij}, \cdot).$$ Now using Chebyshev's  inequality, for any $t>0$, we obtain  
\begin{equation}\label{mean_chebyshev_1}
    \mathbb{P}\left(\left\|\frac{1}{n}\sum_{i=1}^{n}Z_{i}\right\|_{L^2(T)}\geq t\right) \leq \frac{\mathbb{E}\left\|\frac{1}{n}\sum_{i=1}^{n}Z_{i}\right\|_{L^2(T)}^2}{t^2}.
\end{equation}
Since $\{Z_{i}\}_{i=1}^{n}$ are i.i.d and $\mathbb{E}[Z_{i}]= 0$, we have
\begin{equation*}
    \mathbb{E}\left\|\frac{1}{n}\sum_{i=1}^{n}Z_{i}\right\|_{L^2(T)}^2 = \frac{1}{n^2}\sum_{i=1}^{n}\mathbb{E}\|Z_{i}\|_{L^2(T)}^2 = \frac{1}{n^2}\sum_{i=1}^{n}\sum_{k}\frac{\mathbb{E}\langle U_{i}, \psi_{k}\rangle^2_{L^2}}{(\lambda_{k}+\lambda)}.
\end{equation*}
Clearly one can see that $\mathbb{E}\langle U_{i},\psi_{k}\rangle_{L^2(T)}= 0$. So we have
\begin{equation*}
\begin{split}
    \mathbb{E}\left\|\frac{1}{n}\sum_{i=1}^{n}Z_{i}\right\|_{L^2(T)}^2 = \frac{1}{n^2}\sum_{i=1}^{n}\sum_{k}\frac{\text{Var}\langle U_{i}, \psi_{k} \rangle_{L^2(T)}}{(\lambda_{k}+\lambda)}.
\end{split}
\end{equation*}

\noindent
Consider 
\begin{align*}
    \begin{split}
        \text{Var}\langle U_{i}, \psi_{k} \rangle_{L^2(T)} & = \text{Var}_{T}\left[\mathbb{E}_{X|T}\left[\frac{1}{m_{i}}\sum_{j=1}^{m_{i}}(Y_{ij}-\mu_{0}(t_{ij}))\phi_{k}(t_{ij})\Big{|}T\right]\right]\\
        &\qquad\qquad + \mathbb{E}_{T}\left[\text{Var}_{X|T}\left[\frac{1}{m_{i}}\sum_{j=1}^{m_{i}}(Y_{ij}-\mu_{0}(t_{ij}))\phi_{k}(t_{ij})\Big{|}T\right]\right]\\
        & \leq \mathbb{E}\left(\frac{1}{m_{i}}\sum_{j=1}^{m_{i}}(Y_{ij}-\mu_{0}(t_{ij}))\phi_{k}(t_{ik})\right)^2.
        \end{split}
\end{align*}
Using the calculations from (\ref{mean_term_II}), we get
\begin{equation}\label{mean_variance_bound_2}
    \begin{split}
        \left\|\frac{1}{n}\sum_{i=1}^{n}(\Lambda_{1}+\lambda I)^{-\frac{1}{2}}U_{i}\right\|^2_{L^2(T)} \lesssim \frac{\mathcal{N}_{1}(\lambda)}{nm}+ \frac{1}{n},
    \end{split}
\end{equation}
and the result follows from equations (\ref{mean_chebyshev_1}) and (\ref{mean_variance_bound_2}). 
\end{proof}

\begin{lemma}\label{mean_empirical_operator}
    Under Assumptions~\ref{mean_moment}  
    and \ref{mean_embedding}, for $0<\alpha\le\frac{1}{2}$,
    we have
    \begin{equation*}
        \|(\Lambda_{1}+\lambda I)^{-\frac{1}{2}}(\Lambda_{1}-\hat{A}_{n})(\Lambda_{1}+\lambda I)^{-\frac{1}{2}}\|_{\emph{op}} \lesssim_p \frac{1}{\sqrt{nm}}\sqrt{\frac{\mathcal{N}_{1}(\lambda)}{\lambda^{2\alpha}}},
    \end{equation*}
    and
    \begin{equation*}
        \|(\Lambda_{1}+\lambda I)^{-\frac{1}{2}}(\Lambda_{1}-\hat{A}_{n})(\Lambda_{1}+\lambda I)^{-1}\Lambda_{1}^{\frac{1}{2}+\alpha}\|_{\emph{op}} \lesssim_p \sqrt{\frac{\mathcal{N}_1(\lambda)}{mn}}.
    \end{equation*}
\end{lemma}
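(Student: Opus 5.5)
The key observation is that $\hat{A}_{n}$ does not involve the process $X$ or the noise at all. It is a weighted average of independent rank-one operators,
$$\hat{A}_{n}=\sum_{i=1}^{n}\sum_{j=1}^{m_{i}} w_{i}\, k^{\frac12}(t_{ij},\cdot)\otimes_{L^2(T)} k^{\frac12}(t_{ij},\cdot),\qquad w_{i}=\frac{1}{nm_{i}} .$$
Moreover, $\mathbb{E}\big[k^{\frac12}(t,\cdot)\otimes k^{\frac12}(t,\cdot)\big]=\Lambda_{1}$ when $t$ follows the design law; this is uniform on $T$, as $\Lambda_{1}$ is defined with respect to $dt$. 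Both claimed bounds are therefore bounds on a weighted sum of i.i.d.\ centered Hilbert--Schmidt-valued random variables.

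I plan to bound the operator norm by the Hilbert--Schmidt norm and apply Chebyshev's inequality, exactly as in Lemma~\ref{mean_empirical_estimation}. For a generic bounded operator $B$ set $\zeta_{ij}=(\Lambda_{1}+\lambda I)^{-\frac12}\big(k^{\frac12}_{t_{ij}}\otimes k^{\frac12}_{t_{ij}}-\Lambda_{1}\big)B$, where $k^{\frac12}_t:=k^{\frac12}(t,\cdot)$. These are independent and centered, so
$$\mathbb{E}\Big\|\sum_{i,j} w_{i}\zeta_{ij}\Big\|_{HS}^2=\sum_{i}m_{i}w_{i}^2\,\mathbb{E}\|\zeta\|_{HS}^2=\frac{1}{n^2}\sum_{i}\frac{1}{m_{i}}\,\mathbb{E}\|\zeta\|_{HS}^2=\frac{\mathbb{E}\|\zeta\|_{HS}^2}{nm}.$$
Here the harmonic mean enters exactly through $\sum_i m_i w_i^2$. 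Since variance is at most the second moment, and the Hilbert--Schmidt norm of a rank-one operator factorizes, we get
$$\mathbb{E}\|\zeta\|^2_{HS}\le \mathbb{E}\Big[\|(\Lambda_{1}+\lambda I)^{-\frac12}k^{\frac12}_t\|^2_{L^2}\,\|B^*k^{\frac12}_t\|^2_{L^2}\Big].$$

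Expanding in the eigenbasis gives $\|(\Lambda_{1}+\lambda I)^{-\frac12}k^{\frac12}_t\|^2=\sum_\beta \frac{\lambda_\beta\psi_\beta^2(t)}{\lambda_\beta+\lambda}$, whose integral over $t$ is $\mathcal{N}_{1}(\lambda)$.

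\textbf{First bound.} Take $B=(\Lambda_{1}+\lambda I)^{-\frac12}$, so both factors equal this same quantity. I bound one factor uniformly in $t$ and integrate the other. The uniform bound comes from Assumption~\ref{mean_embedding} via the remark following it, which gives $\sum_\beta\lambda_\beta^{2\alpha}\psi_\beta^2(t)\le Z^2$ for almost all $t$. Writing $\frac{\lambda_\beta}{\lambda_\beta+\lambda}=\lambda_\beta^{2\alpha}\cdot\frac{\lambda_\beta^{1-2\alpha}}{\lambda_\beta+\lambda}$ and using $\frac{\lambda_\beta^{1-2\alpha}}{\lambda_\beta+\lambda}\le\lambda^{-2\alpha}$ for $0<\alpha\le\frac12$, we get
$$\sup_t\sum_\beta\frac{\lambda_\beta\psi_\beta^2(t)}{\lambda_\beta+\lambda}\le Z^2\lambda^{-2\alpha}.$$
Integrating the remaining factor then gives $\mathbb{E}\|\zeta\|^2_{HS}\le Z^2\lambda^{-2\alpha}\mathcal{N}_{1}(\lambda)$. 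Chebyshev's inequality yields the first bound, $\frac{1}{\sqrt{nm}}\sqrt{\mathcal{N}_{1}(\lambda)/\lambda^{2\alpha}}$.

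\textbf{Second bound.} Take $B=(\Lambda_{1}+\lambda I)^{-1}\Lambda_{1}^{\frac12+\alpha}$. Now the second factor is bounded without any loss in $\lambda$:
$$\|B^*k^{\frac12}_t\|^2=\sum_\beta\frac{\lambda_\beta^{1+2\alpha}}{(\lambda_\beta+\lambda)^2}\lambda_\beta\psi_\beta^2(t)\le\sum_\beta\lambda_\beta^{2\alpha}\psi_\beta^2(t)\le Z^2 .$$
Integrating the first factor then gives $\mathbb{E}\|\zeta\|^2_{HS}\le Z^2\mathcal{N}_{1}(\lambda)$, hence the rate $\sqrt{\mathcal{N}_{1}(\lambda)/(mn)}$. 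Assumption~\ref{mean_moment} is not actually needed, because $X$ does not enter $\hat{A}_{n}$.

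The main point requiring care is the first bound. There one must spend the $L_\infty$-embedding on exactly one of the two identical factors and accept the $\lambda^{-2\alpha}$ loss, rather than try to bound $\mathbb{E}\|(\Lambda_{1}+\lambda I)^{-\frac12}k^{\frac12}_t\|^4$ directly. I also need to check that the centering term $\Lambda_{1}$ does not spoil the second-moment bound; this holds since $\mathbb{E}\|\zeta-\mathbb{E}\zeta\|^2\le\mathbb{E}\|\zeta\|^2$. Finally, I must confirm that the operator-norm bound in the statement is implied by the Hilbert--Schmidt bound.
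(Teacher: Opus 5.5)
Your proposal is correct and takes essentially the same route as the paper. The paper also bounds the operator norm by the Hilbert--Schmidt norm, via Cauchy--Schwarz in the eigenbasis $\{\psi_k\}$. It then uses independence of the $t_{ij}$ to reduce the second moment to $\frac{1}{nm}\sum_{k,\beta}\langle\phi_k^2,\phi_\beta^2\rangle_{L^2(T)}/((\lambda_k+\lambda)(\lambda_\beta+\lambda))$ with $\phi_k=\sqrt{\lambda_k}\psi_k$, which is exactly your rank-one factorization written in coordinates. From there it does what you do: splits $\lambda_\beta=\lambda_\beta^{2\alpha}\lambda_\beta^{1-2\alpha}$, spends Assumption~\ref{mean_embedding} on one factor, integrates the other to get $\mathcal{N}_1(\lambda)$, and concludes with Markov's inequality.

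Your side remarks are accurate and implicit in the paper as well: Assumption~\ref{mean_moment} is not needed because $\hat{A}_n$ depends only on the design points, and the design law must be the measure that defines $\Lambda_1$.
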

\begin{proof}
We start with
    \begin{equation*}
        \begin{split}
           & \|(\Lambda_{1}+\lambda I)^{-\frac{1}{2}}(\Lambda_{1}-\hat{A}_{n})(\Lambda_{1}+\lambda I)^{-\frac{1}{2}}\|_{\text{op}}\\
        & = \sup_{h \in L^2(T),\, \|h\|_{L^2(T)} = 1} \langle h, (\Lambda_{1}+\lambda I)^{-\frac{1}{2}}(\Lambda_{1}-\hat{A}_{n})(\Lambda_{1}+\lambda I)^{-\frac{1}{2}}h\rangle_{L^2(T)}.
        \end{split}
    \end{equation*}

\noindent
Consider
\begin{align*}
    \begin{split}
       & \langle h, (\Lambda_{1}+\lambda I)^{-\frac{1}{2}}(\Lambda_{1}-\hat{A}_{n})(\Lambda_{1}+\lambda I)^{-\frac{1}{2}}h\rangle_{L^2(T)}\\ &=  \sum_{k, \beta} \frac{\langle h, \psi_{k}\rangle_{L^2(T)}\langle h, \psi_{\beta}\rangle_{L^2(T)}}{(\lambda_{k}+\lambda)^{\frac{1}{2}}(\lambda_{\beta}+\lambda)^{\frac{1}{2}}}\langle \psi_{k}, (\Lambda_{1}-\hat{A}_{n})\psi_{\beta}\rangle_{L^2(T)}\\
        &\leq  \left(\sum_{k, \beta}\frac{\langle \psi_{k}, (\Lambda_{1}-\hat{A}_{n})\psi_{\beta}\rangle_{L^2(T)}^2}{(\lambda_{k}+\lambda)(\lambda_{\beta}+\lambda)}\right)^{\frac{1}{2}}\|h\|_{L^2(T)}^2,
    \end{split}
\end{align*}
where the last step follows from Cauchy-Schwarz inequality. This means,
$$\|(\Lambda_{1}+\lambda I)^{-\frac{1}{2}}(\Lambda_{1}-\hat{A}_{n})(\Lambda_{1}+\lambda I)^{-\frac{1}{2}}\|_{\text{op}}\le \left(\sum_{k, \beta}\frac{\langle \psi_{k}, (\Lambda_{1}-\hat{A}_{n})\psi_{\beta}\rangle_{L^2(T)}^2}{(\lambda_{k}+\lambda)(\lambda_{\beta}+\lambda)}\right)^{\frac{1}{2}}.$$
Taking expectation and applying Jensen's inequality gives
\begin{align*}
    \begin{split}
        \mathbb{E}\|(\Lambda_{1}+\lambda I)^{-\frac{1}{2}}(\Lambda_{1}-\hat{A}_{n})(\Lambda_{1}+\lambda I)^{-\frac{1}{2}}\|_{\text{op}}\le
        \left(\sum_{k, \beta}\frac{\mathbb{E}\langle \psi_{k}, (\Lambda_{1}-\hat{A}_{n})\psi_{\beta}\rangle_{L^2(T)}^2}{(\lambda_{k}+\lambda)(\lambda_{\beta}+\lambda)}\right)^{\frac{1}{2}}.
    \end{split}
\end{align*}
Since $\hat{A}_{n} = \frac{1}{n}\sum_{i=1}^{n}U_{i}$, where $U_{i} =\frac{1}{m_{i}}\sum_{j=1}^{m_{i}}k^{\frac{1}{2}}(t_{ij}, \cdot)\otimes_{L^2}k^{\frac{1}{2}}(t_{ij}, \cdot)$, consider
\begin{align*}
    \begin{split}
        &\mathbb{E}\langle \psi_{k}, (\Lambda_{1}-\hat{A}_{n})\psi_{\beta}\rangle_{L^2(T)}^2  = \mathbb{E}\left[\left(\frac{1}{n}\sum_{i=1}^{n}\langle \psi_{k}, (\Lambda_{1}-U_{i}) \psi_{\beta}
        \rangle_{L^2(T)}\right)^2\right]\\
        & =   \frac{1}{n^2}\sum_{i=1}^{n} \mathbb{E} \langle \psi_{k}, (\Lambda_{1}-U_{i})\psi_{\beta} \rangle^2_{L^2(T)}\\
        & =  \frac{1}{n^2}\sum_{i=1}^{n}\mathbb{E}\left[\frac{1}{m_{i}^2}\sum_{j,j'=1}^{m_{i}}(\phi_{k}(t_{ij})\phi_{\beta}(t_{ij})-\langle \phi_{k}, \phi_{\beta} \rangle_{L^2(T)})(\phi_{k}(t_{ij'})\phi_{\beta}(t_{ij'})-\langle \phi_{k}, \phi_{\beta} \rangle_{L^2(T)})\right]\\
         & \leq   \frac{1}{n^2}\sum_{i=1}^{n}\frac{1}{m_{i}} \mathbb{E}\left[\phi^2_{k}(t_{11})\phi^2_{\beta}(t_{11})\right] =\frac{1}{nm}\langle \phi^2_k,\phi^2_\beta\rangle_{L^2(T)},
    \end{split}
\end{align*}
which yields
\begin{align*}
    \begin{split}
        & \mathbb{E}\|(\Lambda_{1}+\lambda I)^{-\frac{1}{2}}(\Lambda_{1}-\hat{A}_{n})(\Lambda_{1}+\lambda I)^{-\frac{1}{2}}\|_{\text{op}}
        \leq \left(\frac{1}{nm}\sum_{k, \beta} \frac{\langle \phi^2_{k}, \phi_{\beta}^2\rangle_{L^2}}{(\lambda_{k}+\lambda)(\lambda_{\beta}+\lambda)} \right)^{\frac{1}{2}}\\
        &=  \frac{1}{\sqrt{nm}}\left(\int_T{}\sum_{k, \beta}\frac{\lambda_{\beta}^{1-2\alpha}\phi_{k}^2(t)\lambda_{\beta}^{2\alpha}\psi_{\beta}^2(t)}{(\lambda_{k}+\lambda)(\lambda_{\beta}+\lambda)^{1-2 \alpha}(\lambda_{\beta}+\lambda)^{2\alpha}}\,dt\right)^\frac{1}{2}.
    \end{split}
\end{align*}
Under Assumption~\ref{mean_embedding}, we have $\sum_{\beta}\lambda_{\beta}^{2 \alpha}\psi_{\beta}^2(t)\le Z$ for almost every $t \in T$. Hence, we obtain
\begin{equation*}
    \mathbb{E}\|(\Lambda_{1}+\lambda I)^{-\frac{1}{2}}(\Lambda_{1}-\hat{A}_{n})(\Lambda_{1}+\lambda I)^{-\frac{1}{2}}\|_{\text{op}} \lesssim \frac{1}{\sqrt{nm}}\sqrt{\frac{\mathcal{N}_{1}(\lambda)}{\lambda^{2\alpha}}}.
\end{equation*}
from which the result follows via Markov's inequality. 
\\

\noindent
For $\|(\Lambda_{1}+\lambda I)^{-\frac{1}{2}}(\Lambda_{1}-\hat{A}_{n})(\Lambda_{1}+\lambda I)^{-1}\Lambda_{1}^{\alpha+\frac{1}{2}}\|_{\text{op}}$, following similar steps as above, we obtain 
\begin{align*}
    \begin{split}
        &\mathbb{E}\|(\Lambda_{1}+\lambda I)^{-\frac{1}{2}}(\Lambda_{1}-\hat{A}_{n})(\Lambda_{1}+\lambda I)^{-1}\Lambda_{1}^{\alpha+\frac{1}{2}}\|_{\text{op}}\\
        & \leq \left(\frac{1}{nm}\sum_{k, \beta} \frac{ \lambda_{\beta}^{2\alpha+1}\langle \phi^2_{k}, \phi_{\beta}^2\rangle_{L^2}}{(\lambda_{k}+\lambda)(\lambda_{\beta}+\lambda)^2} \right)^{\frac{1}{2}}\\
         & =\frac{1}{\sqrt{nm}}\left(\int_T{}\sum_{k, \beta}\frac{\lambda_{\beta}^{2}\phi_{k}^2(t)\lambda_{\beta}^{2\alpha}\psi_{\beta}^2(t)}{(\lambda_{k}+\lambda)(\lambda_{\beta}+\lambda)^{2}}\right)^\frac{1}{2}\\
        &\lesssim  \frac{1}{\sqrt{nm}}\sqrt{\sum_{k}\frac{\lambda_{k}}{\lambda_{k}+\lambda}} = \sqrt{\frac{\mathcal{N}_{1}(\lambda)}{nm}}.
    \end{split}
\end{align*}
and the result follows. 
\end{proof}

\begin{remark}\label{mean_remark_1}
    It is easy to observe that if we remove Assumption~\ref{mean_embedding} from the analysis of Lemma~\ref{mean_empirical_operator}, then we get an additional $\sqrt{\lambda}$ factor in the denominator of the final bound using the fact that $\sum_{\beta}\phi_{\beta}^2(t) < \infty$ for almost every $t \in T$. So we can conclude that 
    \begin{equation*}
        \|(\Lambda_{1}+\lambda I)^{-1}(\Lambda_{1}-\hat{A}_{n})\|_{\emph{op}} \lesssim_p \sqrt{\frac{\mathcal{N}_{1}(\lambda)}{nm\lambda}} \lesssim \frac{\lambda^{-\frac{1}{2b}-\frac{1}{2}}}{\sqrt{nm}},
    \end{equation*}
where the last step follows using Assumption~\ref{mean_eigenvalue_decay}.
\end{remark}

\begin{lemma}\label{mean_powers_constant_bound}
Under Assumptions \ref{mean_moment} and ~\ref{mean_eigenvalue_decay}, we have 
    \begin{equation*}
        \|(\hat{A}_{n}+\lambda I)^{-1}(\Lambda_{1}+\lambda I)\|_{\emph{op}} \leq_p 2,\qquad \forall~ \lambda \gtrsim (mn)^{-\frac{b}{1+b}}.
    \end{equation*}
    
\noindent
Further, if Assumption~\ref{mean_embedding} holds, then 
\begin{equation*}
        \|(\hat{A}_{n}+\lambda I)^{-\frac{1}{2}}(\Lambda_{1}+\lambda I)^{\frac{1}{2}}\|_{\emph{op}} \leq_p 2,\qquad \forall~ \lambda \gtrsim (mn)^{-\frac{b}{1+2\alpha b}}.
    \end{equation*}
\end{lemma}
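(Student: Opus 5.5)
We reduce both claims to a bound on the normalized deviation $\|(\Lambda_{1}+\lambda I)^{-\frac{1}{2}}(\Lambda_{1}-\hat{A}_{n})(\Lambda_{1}+\lambda I)^{-\frac{1}{2}}\|_{\text{op}}$, via a Neumann series argument. Write $D:=(\Lambda_{1}+\lambda I)^{-\frac{1}{2}}(\Lambda_{1}-\hat{A}_{n})(\Lambda_{1}+\lambda I)^{-\frac{1}{2}}$. The starting point is the factorization
\begin{equation*}
\hat{A}_{n}+\lambda I=(\Lambda_{1}+\lambda I)^{\frac{1}{2}}(I-D)(\Lambda_{1}+\lambda I)^{\frac{1}{2}}.
\end{equation*}
Hence, whenever $\|D\|_{\text{op}}\le \frac{1}{2}$, the operator $I-D$ is invertible with $\|(I-D)^{-1}\|_{\text{op}}\le 2$. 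It follows that
\begin{equation*}
\|(\Lambda_{1}+\lambda I)^{\frac{1}{2}}(\hat{A}_{n}+\lambda I)^{-1}(\Lambda_{1}+\lambda I)^{\frac{1}{2}}\|_{\text{op}}\le 2.
\end{equation*}
Since $\hat{A}_n$ and $\Lambda_1$ are positive self-adjoint, this is the same as $\|(\hat{A}_{n}+\lambda I)^{-\frac{1}{2}}(\Lambda_{1}+\lambda I)^{\frac{1}{2}}\|^2_{\text{op}}\le 2$, which gives the second claim (with constant $\sqrt2\le 2$).

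For the first claim we use a slightly different factorization:
\begin{equation*}
(\hat{A}_n+\lambda I)^{-1}(\Lambda_1+\lambda I)=\bigl(I-(\Lambda_1+\lambda I)^{-1}(\Lambda_1-\hat{A}_n)\bigr)^{-1}.
\end{equation*}
So it suffices to have $\|(\Lambda_{1}+\lambda I)^{-1}(\Lambda_{1}-\hat{A}_{n})\|_{\text{op}}\le\frac12$ with high probability.

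The probabilistic inputs are already available.
\begin{itemize}
\item \emph{Without the embedding assumption.} Remark~\ref{mean_remark_1} gives
\begin{equation*}
\|(\Lambda_{1}+\lambda I)^{-1}(\Lambda_{1}-\hat{A}_{n})\|_{\text{op}}\lesssim_p \frac{\lambda^{-\frac{1}{2b}-\frac12}}{\sqrt{nm}}.
\end{equation*}
We need this to be small. That requires $\lambda^{\frac{1+b}{b}}\gtrsim (nm)^{-1}$, i.e. $\lambda\gtrsim (mn)^{-\frac{b}{1+b}}$, which is exactly the stated range.
\item \emph{With Assumption~\ref{mean_embedding}.} Lemma~\ref{mean_empirical_operator} together with $\mathcal{N}_1(\lambda)\lesssim\lambda^{-1/b}$ gives
\begin{equation*}
\|D\|_{\text{op}}\lesssim_p \frac{\lambda^{-\frac{1}{2b}-\alpha}}{\sqrt{nm}}.
\end{equation*}
This is small once $\lambda^{\frac{1+2\alpha b}{b}}\gtrsim (nm)^{-1}$, i.e. $\lambda\gtrsim (mn)^{-\frac{b}{1+2\alpha b}}$.
\end{itemize}

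These estimates come from Markov's inequality applied to expectations, as in the proof of Lemma~\ref{mean_empirical_operator}. Concretely, $\mathbb{P}(\|D\|_{\text{op}}>\frac12)\le 2\,\mathbb{E}\|D\|_{\text{op}}\le 2c\,\lambda^{-\frac{1}{2b}-\alpha}(nm)^{-\frac12}$. This can be made at most any prescribed $1-\delta$ by taking the implicit constant in $\lambda\gtrsim\cdots$ large enough. That is precisely the meaning of $\le_p 2$. The same argument works in the first case.

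The main obstacle is making the ``$\gtrsim$'' constant uniform. The $\lesssim_p$ statements in Remark~\ref{mean_remark_1} and Lemma~\ref{mean_empirical_operator} hide constants depending on $\delta$, so we must track them explicitly through Markov's inequality rather than quote the rates. One also has to justify that Remark~\ref{mean_remark_1} indeed controls $\|(\Lambda_1+\lambda I)^{-1}(\Lambda_1-\hat A_n)\|_{\text{op}}$. For this, repeat the Hilbert–Schmidt computation of Lemma~\ref{mean_empirical_operator} with weights $(\lambda_k+\lambda)^{-2}$ on one side, and use $\sum_\beta\phi_\beta^2(t)=k(t,t)\le\kappa^2$. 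This yields a bound of order $(nm)^{-1}\lambda^{-1}\mathcal{N}_1(\lambda)$ for the expected squared Hilbert–Schmidt norm, which is what is needed.
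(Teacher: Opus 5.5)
Your proposal is correct and follows the paper's proof. Both arguments use the Neumann-series identities $(\hat{A}_n+\lambda I)^{-1}(\Lambda_1+\lambda I)=\bigl(I-(\Lambda_1+\lambda I)^{-1}(\Lambda_1-\hat{A}_n)\bigr)^{-1}$ and $(\Lambda_1+\lambda I)^{\frac12}(\hat{A}_n+\lambda I)^{-1}(\Lambda_1+\lambda I)^{\frac12}=(I-D)^{-1}$, controlled by Remark~\ref{mean_remark_1} and Lemma~\ref{mean_empirical_operator} respectively, under the same thresholds on $\lambda$. Your version is slightly more careful in three places: you note that $\|(I-D)^{-1}\|_{\text{op}}$ is the \emph{square} of $\|(\hat{A}_n+\lambda I)^{-\frac12}(\Lambda_1+\lambda I)^{\frac12}\|_{\text{op}}$ (the paper's displayed equality omits the square, harmlessly); you track the $\delta$-dependent constants through Markov's inequality; and you justify Remark~\ref{mean_remark_1} via $\sum_\beta\phi_\beta^2(t)=k(t,t)\le\kappa^2$.
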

\begin{proof}
    Observe that 
    \begin{align*}
    \begin{split}
        \|(\hat{A}_{n}+\lambda I)^{-1}(\Lambda_{1}+\lambda I)\|_{\text{op}} & =  \|(I - (\Lambda_{1}+\lambda I)^{-1}(\Lambda_{1}-\hat{A}_{n}))^{-1}\|_{\text{op}}\\
        & \leq  \frac{1}{1-\|(\Lambda_{1}+\lambda I)^{-1}(\Lambda_{1}-\hat{A}_{n})\|_{\text{op}}},
    \end{split}
    \end{align*}
provided that $\|(\Lambda_{1}+\lambda I)^{-1}(\Lambda_{1}-\hat{A}_{n})\|_{\text{op}} < 1$. Using the bound in Remark~\ref{mean_remark_1} with $\lambda \gtrsim (mn)^{-\frac{b}{1+b}}$, we obtain
\begin{equation*}
    \|(\Lambda_{1}+\lambda I)^{-1}(\Lambda_{1}-\hat{A}_{n})\|_{\text{op}} \lesssim_p \frac{\lambda^{-(\frac{1}{2b}+\frac{1}{2})}}{\sqrt{nm}} \leq \frac{1}{2},
\end{equation*}
and the result follows.\\

\noindent
For $\|(\hat{A}_{n}+\lambda I)^{-\frac{1}{2}}(\Lambda_{1}+\lambda I)^{\frac{1}{2}}\|_{\text{op}}$, we see that
\begin{align*}
    \begin{split}
        \|(\hat{A}_{n}+\lambda I)^{-\frac{1}{2}}(\Lambda_{1}+\lambda I)^{\frac{1}{2}}\|_{\text{op}} & =  \|(I-(\Lambda_{1}+\lambda I)^{-\frac{1}{2}}(\Lambda_{1}-\hat{A}_{n})(\Lambda_{1}+\lambda I)^{-\frac{1}{2}})^{-1}\|_{\text{op}}\\
        & \leq  \frac{1}{1-\|(\Lambda_{1}+\lambda I)^{-\frac{1}{2}}(\Lambda_{1}-\hat{A}_{n})(\Lambda_{1}+\lambda I)^{-\frac{1}{2}}\|_{\text{op}}},
    \end{split}
\end{align*}
provided $\|(\Lambda_{1}+\lambda I)^{-\frac{1}{2}}(\Lambda_{1}-\hat{A}_{n})(\Lambda_{1}+\lambda I)^{-\frac{1}{2}}\|_{\text{op}} \leq 1$.
Taking $\lambda\gtrsim (mn)^{-\frac{b}{1+2 \alpha b}}$ in Lemma \ref{mean_empirical_operator} yields the result.
\end{proof}
\begin{remark}
    Note that 
    \begin{equation*}
        \begin{split}
            \|(\Lambda_{1}+\lambda I)^{-1}(\hat{A}_{n}+ \lambda I)\|_{\emph{op}} &=   \|I-(\Lambda_{1}+\lambda I)^{-1}(\Lambda_{1}- \hat{A}_{n})\|_{\emph{op}}\\
            &\leq  1 + \|(\Lambda_{1}+\lambda I)^{-1}(\Lambda_{1}- \hat{A}_{n})\|_{\emph{op}}
            \leq \frac{3}{2},~~\forall~\lambda \gtrsim (mn)^{-\frac{b}{1+b}}.
        \end{split}
    \end{equation*}
\end{remark}

\begin{remark}
    A bound for $\|(\hat{A}_{n}+\lambda I)^{-\frac{1}{2}}(\Lambda_{1}+\lambda I)^{\frac{1}{2}}\|_{\emph{op}}$ can also be obtained using the bound on $\|(\hat{A}_{n}+\lambda I)^{-1}(\Lambda_{1}+\lambda I)\|_{\emph{op}}$ and Corde's inequality \cite{cordes1987}. Nevertheless, we present two separate arguments, as this distinction becomes essential in the mis-specified regime $0 < \alpha \leq \frac{1}{2}$. Under Assumption~\ref{mean_embedding}, one may choose any regularization parameter satisfying $\lambda \gtrsim (mn)^{-\frac{b}{1+2 \alpha b}}$. In contrast, if this assumption is not imposed, the analysis requires $\lambda \gtrsim (mn)^{-\frac{b}{1+b}}$, a condition that is incompatible with the choice $\lambda = (mn)^{-\frac{b}{1+2 \alpha b}}$ that is needed to attain the optimal convergence rates in the mis-specified setting.
\end{remark}

\section{Supplementary Results: Covariance Function Estimation}\label{app:cov}
\numberwithin{equation}{section}
\begin{lemma}\label{covariance_empirical}
Under Assumptions \ref{mean_moment} and \ref{covariance_eigen_decay}, we have
    \begin{equation*}
        \|(\Lambda_{2}+\lambda I)^{-\frac{1}{2}}(O_{2}-T_{n}\Lambda_{2}^{\alpha_{1}-\frac{1}{2}}H)\|_{L^2(T \times T)} \lesssim_p \sqrt{\frac{\mathcal{N}_{2}(\lambda)}{nm}} + \frac{1}{\sqrt{n}},
    \end{equation*}
    and
    \begin{equation*}
        \|(\Lambda_{2}+\lambda I)^{-\frac{1}{2}}(O_{2}-\Lambda^{\frac{1}{2}}_{2}C_{0})\|_{L^2(T \times T)} \lesssim_p \sqrt{\frac{\mathcal{N}_{2}(\lambda)}{nm}} + \frac{1}{\sqrt{n}}.
    \end{equation*}
\end{lemma}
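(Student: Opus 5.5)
The plan follows the proof of Lemma~\ref{mean_empirical_estimation}, now in the product space $L^2(T\times T)$ with the eigenpairs $(\xi_\beta,\Psi_\beta)$ of $\Lambda_2$, and with the U-statistic-type sums over pairs $j\neq k$. Set $\Phi_\beta:=\sqrt{\xi_\beta}\Psi_\beta$, so that $\langle K^{\frac{1}{2}}((s,t),(\cdot,\cdot)),\Psi_\beta\rangle_{L^2(T\times T)}=\Phi_\beta(s,t)$ and $\int_{T\times T}\Phi_\beta^2=\xi_\beta$.

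For the second bound, write $O_2-\Lambda_2^{\frac{1}{2}}C_0=\frac{1}{n}\sum_i U_i$ with
\[
U_i=\frac{1}{m_i(m_i-1)}\sum_{j\neq k}\Bigl(A_{ij}A_{ik}K^{\frac{1}{2}}((t_{ij},t_{ik}),(\cdot,\cdot))-\Lambda_2^{\frac{1}{2}}C_0\Bigr),\qquad A_{ij}=Y_{ij}-\mu_0(t_{ij}).
\]
Because $\epsilon_{ij}$ is independent of $\epsilon_{ik}$ for $j\neq k$, we get $\mathbb{E}[A_{ij}A_{ik}\mid t]=C_0(t_{ij},t_{ik})$, and hence $\mathbb{E}\langle U_i,\Psi_\beta\rangle=\langle C_0,\Phi_\beta\rangle-\langle \Lambda_2^{\frac{1}{2}}C_0,\Psi_\beta\rangle=0$. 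The $U_i$ are independent and centred, so Chebyshev's inequality reduces the claim to bounding
\[
\frac{1}{n^2}\sum_i\sum_\beta \frac{\mathbb{E}\langle U_i,\Psi_\beta\rangle^2}{\xi_\beta+\lambda}.
\]
For the first bound, use the source condition $C_0=\Lambda_2^{\alpha_1}H$ together with the analogue of the identity used in the mean case: $T_n\Lambda_2^{\alpha_1-\frac{1}{2}}H=\frac{1}{n}\sum_i\frac{1}{m_i(m_i-1)}\sum_{j\neq k}C_0(t_{ij},t_{ik})K^{\frac{1}{2}}((t_{ij},t_{ik}),\cdot)$. This follows from $(\Lambda_2^{\frac{1}{2}}F)(s,t)=\langle F,K^{\frac{1}{2}}((s,t),\cdot)\rangle$. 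Then $U_i$ has summands $(A_{ij}A_{ik}-C_0(t_{ij},t_{ik}))K^{\frac{1}{2}}(\cdot)$, which are conditionally centred given the $t$'s. The variance of this $U_i$ is dominated by the second moment appearing in the second bound, so the same estimate covers both.

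The core step is bounding $\mathbb{E}\langle U_i,\Psi_\beta\rangle^2$, which is at most
\[
\frac{1}{m_i^2(m_i-1)^2}\sum_{j\neq k}\sum_{j'\neq k'}\mathbb{E}\bigl[A_{ij}A_{ik}A_{ij'}A_{ik'}\Phi_\beta(t_{ij},t_{ik})\Phi_\beta(t_{ij'},t_{ik'})\bigr].
\]
Split the index pairs by their overlap, as in the proof of Theorem~\ref{thm:covariance}.

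\emph{Disjoint pairs.} There are $O(m_i^4)$ such terms. Integrating out the $t$'s, each equals $\mathbb{E}_X\langle (X-\mu_0)\otimes(X-\mu_0),\Phi_\beta\rangle^2$; the noise contributes nothing here because the indices are distinct. After normalisation this gives $\xi_\beta\,\mathbb{E}\langle (X-\mu_0)\otimes(X-\mu_0),\Psi_\beta\rangle^2$. Summed over $\beta$ with weights $1/(\xi_\beta+\lambda)\le 1/\xi_\beta$, this is at most $\mathbb{E}\|X-\mu_0\|_{L^2}^4$. That quantity is finite by the fourth-moment bound $\mathbb{E}X^4(s)\le C(\mathbb{E}X^2(s))^2$ and $\sup_s\mathbb{E}X^2(s)<\infty$ (use Fubini and Cauchy--Schwarz). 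This produces the $1/n$ term.

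\emph{Pairs sharing one index.} There are $O(m_i^3)$ such terms, so after normalisation they carry a factor $1/m_i$. Apply Cauchy--Schwarz to reduce each to $\mathbb{E}[A_{ij}^2A_{ik}^2\Phi_\beta^2(t_{ij},t_{ik})]$. Conditioning on the $t$'s, $\mathbb{E}[A^2_{ij}A^2_{ik}\mid t]\lesssim \sup_s\mathbb{E}X^4(s)+\sigma_0^4<\infty$, which uses the pointwise fourth-moment condition. The term is therefore $\lesssim\xi_\beta$.

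\emph{Pairs sharing both indices.} There are $O(m_i^2)$ such terms, giving a factor $1/m_i^2$, and each is bounded by the same $\xi_\beta$ estimate.

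Summing, $\sum_\beta \xi_\beta/(\xi_\beta+\lambda)=\mathcal{N}_2(\lambda)$. Since $\frac{1}{n}\sum_i m_i^{-1}=m^{-1}$ and $m_i^{-2}\le m_i^{-1}$, the total is $\lesssim \mathcal{N}_2(\lambda)/(nm)+1/n$, and Chebyshev's inequality gives the lemma.

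I expect the main obstacle to be the single-overlap case, specifically the bookkeeping needed to get a clean $1/m_i$ factor while bounding $\mathbb{E}[A^2A^2\mid t]$ uniformly. This requires the pointwise fourth-moment control together with finiteness of $\mathbb{E}\epsilon^4$, which I would assume implicitly or else handle by separating the noise cross terms. A second point to check is that the disjoint case needs the fourth moment of $X$ in $L^2$ rather than only $\mathbb{E}\|X\|^2$.
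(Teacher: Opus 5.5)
Your proposal is correct and is essentially the paper's argument. Both expand in $\{\Psi_\beta\}$, apply Chebyshev, and split the double sum by index overlap, so that disjoint pairs give the $\mathbb{E}\|X-\mu_0\|^4_{L^2(T)}/n$ term and overlapping pairs give $\xi_\beta/m_i$, hence $\mathcal{N}_2(\lambda)/(nm)$. The differences are cosmetic: you bound the uncentred second moment, which dominates both centred variances by the law of total variance, instead of the paper's $\mathrm{Var}_T$/$\mathbb{E}_T\mathrm{Var}_{X|T}$ split; and you keep $A_{ij}$ intact rather than splitting it into $B_{ij}+\epsilon_{ij}$ as the paper does with $U_1,\dots,U_4$.

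Your worry about $\mathbb{E}\epsilon^4$ is unnecessary. Since $j\neq k$, only $\mathbb{E}[\epsilon_{ij}^2\epsilon_{ik}^2]=\sigma_0^4$ ever appears.
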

\begin{proof}
Since the proof idea for both bounds is similar, we prove the first inequality and provide a sketch for the other. To this end, we start with
    \begin{equation}\label{to_apply_chebyshev}
        \begin{split}
            \|(\Lambda_{2}+\lambda I)^{-\frac{1}{2}}(O_{2}-T_{n}\Lambda_{2}^{\alpha_{1}-\frac{1}{2}}H)\|_{L^2(T \times T)} = \left(\sum_{\beta}\frac{\langle \frac{1}{n}\sum_{i=1}^{n}U_{i}, \Psi_{\beta}\rangle^2_{L^2(T \times T)}}{(\xi_{\beta}+\lambda)}\right)^{\frac{1}{2}},
        \end{split}
    \end{equation}
where $U_{i} = \frac{1}{m_{i}(m_{i}-1)}\sum_{1 \leq j \neq k \leq m_{i}}(A_{ij}A_{ik}-C_{0}(t_{ij},t_{ik}))K^{\frac{1}{2}}((t_{ij},t_{ik}),(\cdot,\cdot))$.\\

\noindent
With the use of Markov and Jensen's inequality, we have, for any $t>0$,
\begin{equation*}
    \mathbb{P}[\|(\Lambda_{2}+\lambda I)^{-\frac{1}{2}}(O_{2}-T_{n}\Lambda_{2}^{\alpha_{1}-\frac{1}{2}}H)\|_{L^2(T \times T)}\geq t] \leq \frac{\left(\sum_{\beta}\frac{\mathbb{E}\langle \frac{1}{n}\sum_{i=1}^{n}U_{i}, \Psi_{\beta}\rangle_{L^2(T \times T)}^2}{(\xi_{\beta}+\lambda)}\right)^{\frac{1}{2}}}{t}.
\end{equation*}
\noindent
Consider
\begin{align*}
\begin{split}
    &\mathbb{E}\left\langle \frac{1}{n}\sum_{i=1}^{n}U_{i}, \Psi_{\beta}\right\rangle_{L^2(T \times T)}^2  =  \frac{1}{n^2}\sum_{i=1}^{n}\mathbb{E}\langle U_{i}, \Psi_{\beta} \rangle^2_{L^2(T \times T)}\\
    &= \frac{1}{n^2}\sum_{i=1}^{n} \text{Var}\left(\frac{1}{m_{i}(m_{i}-1)}\sum_{1 \leq j \neq k \leq m_{i}}(A_{ij}A_{ik}-C_{0}(t_{ij},t_{ik}))\Phi_{\beta}(t_{ij},t_{ik})\right)\\
        &=  \frac{1}{n^2}\sum_{i=1}^{n} \text{Var}_{T}\left(\mathbb{E}_{X|T}\left[\frac{1}{m_{i}(m_{i}-1)}\sum_{1 \leq j \neq k \leq m_{i}}(A_{ij}A_{ik}-C_{0}(t_{ij},t_{ik}))\Phi_{\beta}(t_{ij},t_{ik})\Big|T\right]\right)\\
        &\qquad+ \frac{1}{n^2}\sum_{i=1}^{n}\mathbb{E}_{T}\left(\text{Var}_{X|T}\left[\frac{1}{m_{i}(m_{i}-1)}\sum_{1 \leq j \neq k \leq m_{i}}(A_{ij}A_{ik}-C_{0}(t_{ij},t_{ik}))\Phi_{\beta}(t_{ij},t_{ik})\Big|T\right]\right),
    \end{split}
\end{align*}
where $A_{ij} = (Y_{ij}-\mu_{0}(t_{ij})),~1 \leq i \leq n,~ 1 \leq j\leq m_{i}$.\\

\noindent
Note that
\begin{align*}
    \begin{split}
        & \text{Var}_{T}\left(\mathbb{E}_{X|T}\left[\frac{1}{m_{i}(m_{i}-1)}\sum_{1 \leq j \neq k \leq m_{i}}(A_{ij}A_{ik}-C_{0}(t_{ij},t_{ik}))\Phi_{\beta}(t_{ij},t_{ik})\Big|T\right]\right)\\
        &=  \text{Var}_{T}\left(\frac{1}{m_{i}(m_{i}-1)}\sum_{1 \leq j \neq k \leq m_{i}}(C_{0}(t_{ij},t_{ik})-C_{0}(t_{ij},t_{ik}))\Phi_{\beta}(t_{ij},t_{ik})\right)
        =  0,
    \end{split}
\end{align*}
and 
\begin{align*}
    \begin{split}
       & \mathbb{E}_{T}\left(\text{Var}_{X|T}\left[\frac{1}{m_{i}(m_{i}-1)}\sum_{1 \leq j \neq k \leq m_{i}}(A_{ij}A_{ik}-C_{0}(t_{ij},t_{ik}))\Phi_{\beta}(t_{ij},t_{ik})\Big|T\right]\right)\\
       & \leq  \frac{1}{m_{i}^2(m_{i}-1)^2}\mathbb{E}_{T}\left(\text{Var}_{X|T}\left[\sum_{1 \leq j \neq k \leq m_{i}}A_{ij}A_{ik}\Phi_{\beta}(t_{ij},t_{ik})\Big|T\right]\right)\\
       & =  \frac{1}{m_{i}^2(m_{i}-1)^2}\mathbb{E}_{T}\left(\text{Var}_{X|T}\left[\sum_{1\leq j \neq k \leq m_{i}}(B_{ij}B_{ik}+ \epsilon_{ij}B_{ik}+B_{ij}\epsilon_{ik}+\epsilon_{ij}\epsilon_{ik})\Phi_{\beta}(t_{ij},t_{ik})\Big|T\right]\right)\\
       & \leq  \frac{1}{m_{i}^2(m_{i}-1)^2}\mathbb{E}[U_{1}^2+U_{2}^2+U_{3}^2+U_{4}^2],
    \end{split}
\end{align*}
where $B_{ij}= (X_{i}(t_{ij})-\mu_{0}(t_{ij})),~ i \in [n],~j \in [m_{i}]$ and  
$$
U_{1} = \sum_{1\leq j \neq k \leq m_{i}}B_{ij}B_{ik}\Phi_{\beta}(t_{ij},t_{ik}),\,\, U_{2}= \sum_{1\leq j \neq k \leq m_{i}}\epsilon_{ij}B_{ik}\Phi_{\beta}(t_{ij},t_{ik})
$$
$$U_{3}= \sum_{1\leq j \neq k \leq m_{i}}B_{ij}\epsilon_{ik}\Phi_{\beta}(t_{ij},t_{ik}),\,\,\text{and}\,\, U_{4}= \sum_{1\leq j \neq k \leq m_{i}}\epsilon_{ij}\epsilon_{ik}\Phi_{\beta}(t_{ij},t_{ik}).$$
\noindent
\textbf{Bound for \textit{$U_{1}$:}}
\begin{equation*}
    \begin{split}
        \mathbb{E}[U_{1}^2] = & \mathbb{E}\left[\sum_{1 \leq j \neq k \leq m_{i}}B_{ij}B_{ik}\Phi_{\beta}(t_{ij},t_{ik})\right]^2\\
        = & \mathbb{E}\left[\sum_{1 \leq j \neq k \leq m_{i}}\sum_{1 \leq j' \neq k' \leq m_{i}}B_{ij}B_{ik}B_{ij'}B_{ik'}\Phi_{\beta}(t_{ij},t_{ik})\Phi_{\beta}(t_{ij'},t_{ik'})\right].
    \end{split}
\end{equation*}
We will divide indices in three cases and will bound each case seperately.\\

\noindent
\underline{\textit{Case-1 $(\{j,k\}\cap\{j',k'\} = \emptyset)$}}
\begin{equation*}
    \begin{split}
       & \mathbb{E}[B_{ij}B_{ik}B_{ij'}B_{ik'}\Phi_{\beta}(t_{ij},t_{ik})\Phi_{\beta}(t_{ij'},t_{ik'})]\\
      & =  \mathbb{E}_{X}\left[\int_{T \times T}(X_{i}(s)-\mu_{0}(s))(X_{i}(t)-\mu_{0}(t))\Phi_{\beta}(s,t)\,ds dt\right]^2.
    \end{split}
\end{equation*}

\noindent
\underline{\textit{Case-2 $((j,k)= (j',k'))$}}
\begin{equation*}
    \begin{split}
        & \mathbb{E}[B_{ij}B_{ik}B_{ij'}B_{ik'}\Phi_{\beta}(t_{ij},t_{ik})\Phi_{\beta}(t_{ij'}t_{ik'})]\\
        & = \mathbb{E}_{X}\left[\int_{T \times T}(X_{i}(s)-\mu_{0}(s))^2(X_{i}(t)-\mu_{0}(t))^2\Phi_{\beta}^2(s,t)\, ds dt\right]\\
        & \leq  \int_{T \times T}(\mathbb{E}[(X_{i}(s)-\mu_{0}(s))^4])^{\frac{1}{2}}(\mathbb{E}[(X_{i}(t)-\mu_{0}(t))^4])^{\frac{1}{2}} \Phi_{\beta}^2(s,t) \,dsdt
         \lesssim  \xi_{\beta},
    \end{split}
\end{equation*}
where last step uses $\mathbb{E}[X^4(t)] \lesssim \mathbb{E}[X^2(t)]^2$ for a.e. $t \in T$.\\

\noindent
\underline{\textit{Case-3 $(j=j',k\neq k')$}}
\begin{equation*}
    \begin{split}
        & \mathbb{E}[B_{ij}B_{ik}B_{ij'}B_{ik}'\Phi_{\beta}(t_{ij},t_{ik})\Phi_{\beta}(t_{ij'},t_{ik'})]\\
        & \leq  \mathbb{E}_{X}\left[\int_{T\times T \times T}(X_{i}(s)-\mu_{0}(s))^2(X_{i}(t)-\mu_{0}(t))(X_{1}(w)-\mu_{0}(w))\Phi_{\beta}(s,t)\Phi_{\beta}(s,w)\,dsdtdw\right]\\
        & \lesssim  \xi_{\beta},
    \end{split}
\end{equation*}
where last step follows from repetitive use of Cauchy-Schwartz inequality.\\

\noindent
\textbf{Bound for \textit{$U_{2}$:}}
\begin{equation*}
\begin{split}
    \mathbb{E}[U_{2}^2]  & =  \mathbb{E}\left[\sum_{1\leq j \neq k \leq m_{i}}\epsilon_{ij}B_{ik}\right]^2\\
    & =  \mathbb{E}\left[\sum_{1 \leq j \neq k \leq m_{i}}\sum_{1 \leq j' \neq k' \leq m_{i}}\epsilon_{ij}\epsilon_{ij'}B_{ik}B_{ik}'\Phi_{\beta}(t_{ij},t_{ik})\Phi_{\beta}(t_{ij'},t_{ik'})\right].
    \end{split}
\end{equation*}
Similar to $U_{1}$, we will make three cases:\\

\noindent
\underline{\textit{Case-1 $(\{j,k\}\cap\{j',k'\} = \emptyset)$}}
\begin{equation*}
    \begin{split}
       & \mathbb{E}[\epsilon_{ij}\epsilon_{ij'}B_{ik}B_{ik}'\Phi_{\beta}(t_{ij},t_{ik})\Phi_{\beta}(t_{1j'},t_{1k'})]\\
       & =  \mathbb{E}_{X}\left[\int_{T \times T} \epsilon_{ij}(X_{i}(t)-\mu_{0}(t))\Phi_{\beta}(s,t)\,ds dt\right]^2. 
    \end{split}
\end{equation*}

\noindent
\underline{\textit{Case-2 $((j,k)=(j',k'))$}}
\begin{equation*}
    \begin{split}
        & \mathbb{E}[\epsilon_{ij}\epsilon_{ij'}B_{ik}B_{ik}'\Phi_{\beta}(t_{ij},t_{ik})\Phi_{\beta}(t_{ij'},t_{ik'})]\\
       & =  \mathbb{E}_{X}\left[\int_{T \times T}(X_{i}(s)-\mu_{0}(s))^2\epsilon_{ij}^2\Phi_{\beta}^2(s,t)\,dsdt\right]\\
       & \leq  \sigma_{0}^2\int_{T \times T} \mathbb{E}(X_{i}(s)-\mu_{0}(s))^2 \Phi_{\beta}^2(s,t)\,dsdt \lesssim \xi_{\beta}.
    \end{split}
\end{equation*}

\noindent
\underline{\textit{Case-3 $(j=j',k \neq k')$}}
\begin{equation*}
    \begin{split}
        & \mathbb{E}[\epsilon_{ij}\epsilon_{ij'}B_{ik}B_{ik}'\Phi_{\beta}(t_{ij},t_{ik})\Phi_{\beta}(t_{ij'},t_{ik'})]\\ 
        & \leq  \mathbb{E}_{X}\left[\int_{T\times T \times T}\epsilon_{ij}^2(X_{i}(t)-\mu_{0}(t))(X_{i}(w)-\mu_{0}(w))\Phi_{\beta}(s,t)\Phi_{\beta}(s,w)\,dsdtdw\right]
        \lesssim  \xi_{\beta}.
    \end{split}
\end{equation*}

\noindent
Since the bound for $U_{3}$ is the same as that of $U_{2}$, we move to bounding $U_{4}$.\\

\noindent
\textbf{Bound for \textit{$U_{4}$:}}
\begin{equation*}
    \begin{split}
        \mathbb{E}[U_{4}^2] =  \mathbb{E}[\epsilon_{ij}\epsilon_{ij'}\epsilon_{ik}\epsilon_{ik'}\Phi_{\beta}(t_{ij},t_{ik})\Phi_{\beta}(t_{ij'},t_{ik'})]
        = \sigma_{0}^4 \xi_{\beta} \lesssim \xi_{\beta}.
    \end{split}
\end{equation*}

\noindent
Putting all these bounds together, we obtain
\begin{equation*}
    \begin{split}
        & \sum_{\beta}\frac{\mathbb{E}\langle \frac{1}{n}\sum_{i=1}^{n}U_{i}, \Psi_{\beta}\rangle_{L^2(T \times T)}^2}{(\xi_{\beta}+\lambda)} \\
        & \lesssim  \frac{1}{n^2}\sum_{i=1}^{n}\sum_{\beta}\frac{\mathbb{E}_{X}\left[\int_{T \times T}(X_{i}(s)-\mu_{0}(s))(X_{i}(t)-\mu_{0}(t))\Phi_{\beta}(s,t)\,ds dt\right]^2}{\xi_{\beta}+\lambda}\\
        & \qquad\qquad+\frac{1}{n^2}\sum_{i=1}^{n}\frac{1}{m_{i}}\sum_{\beta} \frac{\xi_{\beta}}{\xi_{\beta}+\lambda}+ \frac{1}{n^2}\sum_{i=1}^{n}\sum_{\beta}\frac{\mathbb{E}_{X}\left[\int_{T \times T}(X_{i}(t)-\mu_{0}(t))\Phi_{\beta}(s,t)\,ds dt\right]^2}{\xi_{\beta}+\lambda}\\
        & \lesssim  \frac{\mathcal{N}_{2}(\lambda)}{nm} + \frac{\mathbb{E}_{X}\|X_{i}-\mu_{0}\|_{L^2(T)}^{4}}{n} + \frac{\mathbb{E}_{X}\|X_{i}-\mu_{0}\|_{L^2(T)}^{2}}{n}
         \lesssim  \frac{\mathcal{N}_{2}(\lambda)}{nm} + \frac{1}{n}.
    \end{split}
\end{equation*}
Therefore,
\begin{equation*}
    \|(\Lambda_{2}+\lambda I)^{-\frac{1}{2}}(O_{2}-T_{n}\Lambda_{2}^{\alpha_{1}-\frac{1}{2}}H)\|_{L^2(T \times T)} \lesssim_p \sqrt{\frac{\mathcal{N}_{2}(\lambda)}{nm}} + \frac{1}{\sqrt{n}}.
\end{equation*}
\noindent
Note that 
\begin{equation*}
    \left\|(\Lambda_{2}+\lambda I)^{-\frac{1}{2}}(O_{2}-\Lambda^{\frac{1}{2}}_{2}C_{0})\right\|_{L^2(T \times T)} =\sum_{\beta}\frac{\langle \frac{1}{n}\sum_{i=1}^{n}U_{i}, \Psi_{\beta} \rangle^2_{L^2(T \times T)} }{(\xi_{\beta}+\lambda)},
\end{equation*}
where $$U_{i} := \frac{1}{m_{i}(m_{i}-1)}\sum_{1 \leq j\neq k\leq m_{i}}A_{ij}A_{ik}K^{\frac{1}{2}}((t_{ij},t_{ik}),(\cdot,\cdot))-\Lambda_{2}^{\frac{1}{2}}C_{0}.$$  Clearly, $\mathbb{E}\langle U_{i}, \Psi_{\beta}\rangle_{L^2(T \times T)} =0,~1 \leq i \leq n$, and therefore, 
\begin{equation*}
    \begin{split}
        & \mathbb{E}\left\langle \frac{1}{n}\sum_{i=1}^{n}U_{i}, \Psi_{\beta}\right\rangle_{L^2(T \times T)}^2 = \frac{1}{n^2}\sum_{i=1}^{n}\mathbb{E}\langle U_{i}, \Psi_{\beta} \rangle^2_{L^2(T \times T)}\\
         = & \frac{1}{n^2}\sum_{i=1}^{n} \text{Var}\left(\frac{1}{m_{i}(m_{i}-1)}\sum_{1 \leq j \neq k \leq m_{i}}(A_{ij}A_{ik}\Phi_{\beta}(t_{ij},t_{ik})-\langle \Lambda^{\frac{1}{2}}_{2}C_{0}, \Psi_{\beta}\rangle_{L^2(T \times T)}) \right)\\
         = &   \frac{1}{n^2}\sum_{i=1}^{n}\text{Var}_{T}\left(\mathbb{E}_{X|T}\left[\frac{1}{m_{i}(m_{i}-1)}\sum_{1\leq j\neq k \leq m_{i}}(A_{ij}A_{ik}\Phi_{\beta}(t_{ij},t_{ik})-\langle \Lambda^{\frac{1}{2}}_{2}C_{0}, \Psi_{\beta}\rangle_{L^2(T \times T)})\right]\right)\\
        & + \frac{1}{n^2}\sum_{i=1}^{n}\mathbb{E}_{T}\left(\text{Var}_{X|T}\left[\frac{1}{m_{i}(m_{i}-1)}\sum_{1\leq j\neq k \leq m_{i}}(A_{ij}A_{ik}\Phi_{\beta}(t_{ij},t_{ik})-\langle \Lambda^{\frac{1}{2}}_{2}C_{0}, \Psi_{\beta}\rangle_{L^2(T \times T)})\right]\right),
    \end{split}
\end{equation*}
where $A_{ij} = (Y_{ij}-\mu_{0}(t_{ij})),~1 \leq i \leq n,~1\leq j \leq m_{i}$. The second term follows exactly as that of the first result and so we consider only the first term.
\begin{equation*}
    \begin{split}
        & \text{Var}_{T}\left(\mathbb{E}_{X|T}\left[\frac{1}{m_{i}(m_{i}-1)}\sum_{1\leq j\neq k \leq m_{i}}(A_{ij}A_{ik}\Phi_{\beta}(t_{ij},t_{ik})-\langle \Lambda^{\frac{1}{2}}_{2}C_{0}, \Psi_{\beta}\rangle_{L^2(T \times T)})\right]\right)\\
        &=  \text{Var}_{T}\left(\frac{1}{m_{i}(m_{i}-1)}\sum_{1\leq j \neq k \leq m_{i}}C_{0}(t_{ij},t_{ik})\Phi_{\beta}(t_{ij},t_{ik})-\langle \Lambda^{\frac{1}{2}}_{2}C_{0}, \Psi_{\beta}\rangle_{L^2(T \times T)}\right)\\
        & \leq  \frac{1}{m_{i}^2(m_{i}-1)^2}\mathbb{E}_{T}\left(\sum_{1\leq j \neq k \leq m_{i}}C_{0}(t_{ij},t_{ik})\Phi_{\beta}(t_{ij},t_{ik})\right)^2\\
        & =  \frac{1}{m_{i}^2(m_{i}-1)^2}\mathbb{E}_{T}\left(\sum_{1 \leq j \neq k \leq m_{i}}\sum_{1 \leq j'\neq k'\leq m_{i}}C_{0}(t_{ij},t_{ik})C_{0}(t_{ij'},t_{ik'})\Phi_{\beta}(t_{ij},t_{ik})\Phi_{\beta}(t_{ij'},t_{ik'})\right).
    \end{split}
\end{equation*}
We consider three cases over indices to bound this term.\\

\noindent
\underline{\textit{Case-1} $ (\{j,k\}\cap\{j',k'\} = \emptyset)$}
\begin{equation*}
    \begin{split}
        \mathbb{E}_{T}[C_{0}(t_{ij},t_{ik})\Phi_{\beta}(t_{ij},t_{k})C_{0}(t_{ij'},t_{ik'})\Phi_{\beta}(t_{ij'},t_{ik'})] = \langle C_{0}, \Phi_{\beta}\rangle^2_{L^2(T\times T)}.
    \end{split}
\end{equation*}

\noindent
\underline{\textit{Case-2} $ ((j,k)= (j',k'))$}
\begin{equation*}
    \begin{split}
        \mathbb{E}_{T}[C_{0}(t_{ij},t_{ik})\Phi_{\beta}(t_{ij},t_{ik})C_{0}(t_{ij'},t_{ik'})\Phi_{\beta}(t_{ij'},t_{ik'})] =&  \int_{T \times T}C_{0}^2(s,t)\Phi_{\beta}^2(s,t)\,dsdt\\
        \lesssim \int_{T \times T}\Phi_{\beta}^2(s,t)\,dsdt  = \xi_{\beta}.
    \end{split}
\end{equation*}

\noindent
\underline{\textit{Case-3} $ (j = j', k \neq k')$}
\begin{equation*}
\begin{split}
    &\mathbb{E}_{T}[C_{0}(t_{ij},t_{ik})\Phi_{\beta}(t_{ij},t_{ik})C_{0}(t_{ij'},t_{ik'})\Phi_{\beta}(t_{1j'},t_{ik'})] \\
    & = \int_{T\times T \times T}C_{0}(s,t)\Phi_{\beta}(s,t)C_{0}(s,w)\Phi_{\beta}(s,w)\, ds dt dw
    \leq  \int_{T \times T} C_{0}^2(s,t)\Phi_{\beta}^2(s,t)\,ds dt\\
    &\lesssim  \int_{T \times T} \Phi_{\beta}^2(s,t)ds dt = \xi_{\beta}.
\end{split}
\end{equation*}
Therefore, with similar calculation as the previous result, the current one follows.
\end{proof}

\begin{lemma}\label{covarianve_empirical_operator_without_embedding}
    Under Assumptions~\ref{mean_moment} and \ref{covariance_eigen_decay}, we have
    \begin{equation*}
        \|(\Lambda_{2}+\lambda I)^{-1}(T_{n}-\Lambda_{2})\|_{\emph{op}} \lesssim_p \sqrt{\frac{\mathcal{N}_{2}(\lambda)}{nm \lambda}}.
    \end{equation*}
\end{lemma}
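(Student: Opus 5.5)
We will mirror the argument of Lemma~\ref{mean_empirical_operator} and Remark~\ref{mean_remark_1}, now on $L^2(T\times T)$ with the eigenpairs $(\xi_\beta,\Psi_\beta)$ of $\Lambda_2$. Write $\Phi_\beta:=\sqrt{\xi_\beta}\Psi_\beta$, so that $K^{\frac12}((s,t),(\cdot,\cdot))=\sum_\beta\Phi_\beta(s,t)\Psi_\beta$ and $\langle \Psi_\gamma,K^{\frac12}((s,t),\cdot)\rangle=\Phi_\gamma(s,t)$. Expanding the operator in this basis, Cauchy--Schwarz gives
\[
\|(\Lambda_{2}+\lambda I)^{-1}(T_{n}-\Lambda_{2})\|_{\text{op}}^2\le \sum_{\gamma,\beta}\frac{\langle \Psi_\gamma,(T_n-\Lambda_2)\Psi_\beta\rangle^2_{L^2(T\times T)}}{(\xi_\gamma+\lambda)^2}\le \frac1\lambda\sum_{\gamma,\beta}\frac{\langle \Psi_\gamma,(T_n-\Lambda_2)\Psi_\beta\rangle^2}{\xi_\gamma+\lambda}.
\]
We then take expectations, apply Jensen, and finish with Markov's inequality. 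This reduces the proof to a second-moment computation.

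Write $T_n=\frac1n\sum_i U_i$ with
\[
U_i=\frac{1}{m_i(m_i-1)}\sum_{j\neq k}K^{\frac12}((t_{ij},t_{ik}),\cdot)\otimes K^{\frac12}((t_{ij},t_{ik}),\cdot).
\]
We need $\mathbb{E}U_i=\Lambda_2$, so we take the design density to be uniform on $T$, as in the lower-bound constructions, or absorb the density into $\Lambda_2$. The $U_i$ are independent, which gives
\[
\mathbb{E}\langle\Psi_\gamma,(T_n-\Lambda_2)\Psi_\beta\rangle^2=\frac1{n^2}\sum_i\operatorname{Var}\Big(\frac{1}{m_i(m_i-1)}\sum_{j\ne k}\Phi_\gamma\Phi_\beta(t_{ij},t_{ik})\Big).
\]
The key step is to split the variance of this U-statistic over index pairs, as in the case analysis of Lemma~\ref{covariance_empirical}:
\begin{itemize}
\item disjoint pairs $\{j,k\}\cap\{j',k'\}=\emptyset$ contribute zero covariance, since the $t_{ij}$ are i.i.d.;
\item identical pairs contribute $O(1/(m_i(m_i-1)))\,\mathbb{E}[\Phi_\gamma^2\Phi_\beta^2]$;
\item pairs sharing exactly one index contribute $O(1/m_i)\,\mathbb{E}[\Phi_\gamma\Phi_\beta(s,t)\Phi_\gamma\Phi_\beta(s,w)]$.
\end{itemize}
The shared-index terms are bounded by Cauchy--Schwarz by $\int\!\big(\int\Phi_\gamma\Phi_\beta(s,t)\,dt\big)^2ds\le\langle\Phi_\gamma^2,\Phi_\beta^2\rangle$. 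Altogether the variance is $\lesssim \frac1{m_i}\langle\Phi^2_\gamma,\Phi^2_\beta\rangle_{L^2(T\times T)}$, and averaging over $i$ with the harmonic mean gives a factor $\frac{1}{nm}$.

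It remains to sum:
\[
\sum_{\gamma,\beta}\frac{\langle\Phi_\gamma^2,\Phi_\beta^2\rangle}{\xi_\gamma+\lambda}=\int_{T\times T}\Big(\sum_\gamma\frac{\xi_\gamma\Psi^2_\gamma(s,t)}{\xi_\gamma+\lambda}\Big)\Big(\sum_\beta\Phi^2_\beta(s,t)\Big)\,ds\,dt .
\]
Mercer's theorem gives $\sum_\beta\Phi_\beta^2(s,t)=K((s,t),(s,t))\le\kappa^2$ uniformly, so the integral is $\le\kappa^2\mathcal{N}_2(\lambda)$. Combining, $\mathbb{E}\|(\Lambda_2+\lambda I)^{-1}(T_n-\Lambda_2)\|_{\text{op}}\lesssim\sqrt{\mathcal{N}_2(\lambda)/(nm\lambda)}$, and Markov's inequality yields the claim.

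The main obstacle I expect is the U-statistic variance bound. I have to check that the shared-index terms are really $O(1/m_i)$ and not $O(1)$, which relies on the centring by $\Lambda_2$ killing the disjoint-pair contributions. I also have to handle the boundedness of $K$ on the diagonal carefully when pulling out $\sum_\beta\Phi^2_\beta$; without that uniform bound, an extra factor would appear, as in Remark~\ref{mean_remark_1}.
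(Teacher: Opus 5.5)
Your proposal is correct and follows essentially the same route as the paper: the Hilbert--Schmidt bound via Cauchy--Schwarz, Jensen, independence across curves, the case split of the U-statistic variance, and Markov's inequality. In the variance split, disjoint pairs vanish, while identical and one-shared-index pairs are bounded by $\langle\Phi_\gamma^2,\Phi_\beta^2\rangle$. You also make explicit what the paper compresses into ``$\sum_\beta\Phi_\beta^2(s,t)<\infty$'': the step $(\xi_\gamma+\lambda)^{-2}\le\lambda^{-1}(\xi_\gamma+\lambda)^{-1}$, the uniform Mercer bound $\sum_\beta\Phi_\beta^2(s,t)=K((s,t),(s,t))\le\kappa^2$, and the uniform-design normalization needed for $\mathbb{E}U_i=\Lambda_2$.
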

\begin{proof}
We start with
    \begin{align*}
        \begin{split}
           & \|(\Lambda_{2}+\lambda I)^{-1}(T_{n}-\Lambda_{2})\|_{\text{op}}\\
           &=  \sup_{g,h \in L^2(T \times T), \|g\|=\|h\|=1} |\langle g, (\Lambda_{2}+\lambda I)^{-1}(T_{n}-\Lambda_{2})h \rangle_{L^2(T \times T)}|\\
            &=  \sup_{g,h \in L^2(T \times T), \|g\|=\|h\|=1} \left|\sum_{l, \beta}\frac{\langle g, \Psi_{l}\rangle_{L^2}  \langle h, \Psi_{\beta}\rangle_{L^2} }{(\xi_{l}+\lambda)}\langle \Psi_{l}, (T_{n}-\Lambda_{2}) \Psi_{\beta} \rangle_{L^2(T \times T)}\right| .
        \end{split}
    \end{align*}

\noindent
By applying Cauchy-Schwarz, we see that
\begin{align*}
    \begin{split}
       & \|(\Lambda_{2}+\lambda I)^{-1}(T_{n}-\Lambda_{2})\|_{\text{op}}\\
       &\leq  \sup_{g,h \in L^2(T \times T),\|g\|=\|h\|=1}  \left(\sum_{l, \beta}\frac{
        \langle \Psi_{l}, (T_{n}-\Lambda_{2}) \Psi_{\beta} \rangle^2_{L^2(T \times T)} }{(\xi_{l}+\lambda)^2}\right)^{\frac{1}{2}} \|g\|_{L^2(T)}\|h\|_{L^2(T)}\\
        &\leq  \left(\sum_{l, \beta}\frac{\langle \Psi_{l}, (T_{n}-\Lambda_{2}) \Psi_{\beta} \rangle^2_{L^2(T \times T)} }{(\xi_{l}+\lambda)^2}\right)^{\frac{1}{2}}.
    \end{split}
\end{align*}

\noindent
Taking expectation on both sides and applying Jensen's inequality, we obtain
\begin{equation*}
    \begin{split}
       \mathbb{E} \|(\Lambda_{2}+\lambda I)^{-1}(T_{n}-\Lambda_{2})\|_{\text{op}} \leq \left(\sum_{l, \beta}\frac{\mathbb
        E\langle \Psi_{l}, (T_{n}-\Lambda_{2}) \Psi_{\beta} \rangle^2_{L^2(T \times T)} }{(\xi_{l}+\lambda)^2}\right)^{\frac{1}{2}}.
    \end{split}
\end{equation*}
Now, consider
\begin{equation*}
    \begin{split}
\mathbb{E}\langle \Psi_{l}, (T_{n}-\Lambda_{2}) \Psi_{\beta} \rangle^2_{L^2(T \times T)} & =  \mathbb{E}\left[\left\langle \Psi_{l}, \left(\frac{1}{n}\sum_{i=1}^{n}U_{i}-\Lambda_{2}\right) \Psi_{\beta} \right\rangle^2_{L^2(T \times T)}\right]\\
& =  \frac{1}{n^2}\sum_{i=1}^{n} \mathbb{E}\langle \Psi_{l}, (U_{i}-\Lambda_{2}) \Psi_{\beta} \rangle^2_{L^2(T \times T)},
    \end{split}
\end{equation*}
where $U_{i}= \frac{1}{m_{i}(m_{i}-1)}\sum_{1\leq j \neq k \leq m_{i}} K^{\frac{1}{2}}((t_{ij},t_{ik}),(\cdot,\cdot)) \otimes_{L^2(T \times T)} K^{\frac{1}{2}}((t_{ij},t_{ik}),(\cdot,\cdot))$
and 
\begin{equation*}
    \begin{split}
       & \mathbb{E}\langle \Psi_{l}, (T_{n}-\Lambda_{2}) \Psi_{\beta} \rangle^2_{L^2(T\times T)}\\
       & =  \frac{1}{n^2}\sum_{i=1}^{n}\frac{1}{m_{i}^2(m_{i}-1)^2}\mathbb{E}\left[\sum_{1\leq j \neq k \leq m_{i}}\sum_{1\leq j' \neq k' \leq m_{i}}(\Phi_{l}(t_{ij},t_{ik}) \Phi_{\beta}(t_{ij},t_{ik})  - \langle \Psi_{l}, \Lambda_{2}\Psi_{\beta}\rangle_{L^2(T \times T)})\right.\\
       & \qquad \qquad \qquad \qquad\times \left.(\Phi_{l}(t_{ij'},t_{ik'}) \Phi_{\beta}(t_{ij'},t_{ik'})- \langle \Psi_{l}, \Lambda_{2}\Psi_{\beta}\rangle_{L^2(T \times T)}) \right].
    \end{split}
\end{equation*}
We make three cases over indices to bound each one separately.\\

\noindent
\underline{\textit{Case-1 $(\{j,k\}\cap \{j',k'\} = \emptyset)$}}
\begin{equation*}
    \begin{split}
       & \mathbb{E}\left[(\Phi_{l}(t_{ij},t_{ik}) \Phi_{\beta}(t_{ij},t_{ik})  - \langle \Psi_{l}, \Lambda_{2}\Psi_{\beta}\rangle_{L^2(T \times T)})\right.\\
       &
       \qquad\left.\times (\Phi_{l}(t_{ij'},t_{ik'}) \Phi_{\beta}(t_{ij'},t_{ik'})- \langle \Psi_{l}, \Lambda_{2}\Psi_{\beta}\rangle_{L^2(T \times T)}) \right] \\
        & =  (\langle \Phi_{l}, \Phi_{\beta} \rangle_{L^2(T \times T)}-\langle \Phi_{l}, \Phi_{\beta} \rangle_{L^2(T \times T)})^2 = 0.
    \end{split}
\end{equation*}
\noindent
\underline{\textit{Case-2 $((j,k)= (j',k'))$}}
\begin{equation*}
    \begin{split}
        & \mathbb{E}\left[(\Phi_{l}(t_{ij},t_{ik}) \Phi_{\beta}(t_{ij},t_{ik})  - \langle \Psi_{l}, \Lambda_{2}\Psi_{\beta}\rangle_{L^2(T \times T)})\right.\\
        &\qquad\left.\times(\Phi_{l}(t_{ij'},t_{ik'}) \Phi_{\beta}(t_{ij'},t_{ik'})- \langle \Psi_{l}, \Lambda_{2}\Psi_{\beta}\rangle_{L^2(T \times T)}) \right]\\
        & =  \int_{T \times T}\Phi_{l}^2(s,t)\Phi_{\beta}^2(s,t)\,ds dt- \langle \Psi_{l}, \Lambda_{2}\Psi_{\beta}\rangle^2_{L^2(T \times T)}\\
        & \leq  \int_{T \times T}\Phi_{l}^2(s,t)\Phi_{\beta}^2(s,t)\,ds dt.
    \end{split}
\end{equation*}

\noindent
\underline{\textit{Case-3 $(j= j', k \neq k')$}}
\begin{equation*}
    \begin{split}
        & \mathbb{E}\left[(\Phi_{l}(t_{ij},t_{ik}) \Phi_{\beta}(t_{ij},t_{ik})  - \langle \Psi_{l}, \Lambda_{2}\Psi_{\beta}\rangle_{L^2(T \times T)})\right.\\
        &\qquad\left.\times(\Phi_{l}(t_{ij'},t_{ik'}) \Phi_{\beta}(t_{ij'},t_{ik'})- \langle \Psi_{l}, \Lambda_{2}\Psi_{\beta}\rangle_{L^2(T \times T)}) \right]\\
        & \leq \mathbb{E}\left[\Phi_{l}(t_{ij},t_{ik}) \Phi_{\beta}(t_{ij},t_{ik}) \Phi_{l}(t_{ij'},t_{ik'}) \Phi_{\beta}(t_{ij'},t_{ik'}) \right]\\
        & \leq  \left(\int_{T \times T}\Phi_{l}^2(s,t)\Phi_{\beta}^2(s,t)\,ds dt\right)^{\frac{1}{2}} \left(\int_{T \times T}\Phi_{l}^2(s,w)\Phi_{\beta}^2(s,w)\,ds dt\right)^{\frac{1}{2}}\\
        & =  \int_{T \times T}\Phi_{l}^2(s,t)\Phi_{\beta}^2(s,t)\,ds dt.
    \end{split}
\end{equation*}
\noindent
Putting all things together, we have
\begin{equation*}
    \begin{split}
        \mathbb{E}\|(\Lambda_{2}+\lambda I)^{-1}(T_{n}-\Lambda_{2})\|_{\text{op}} &\lesssim  \left(\frac{1}{n^2}\sum_{i=1}^{n}\frac{1}{m_{i}}\sum_{l, \beta}\frac{\int_{T \times T}\Phi_{l}^2(s,t)\Phi_{\beta}^2(s,t)\,ds dt}{(\xi_{l}+\lambda)^2}\right)^{\frac{1}{2}}.
    \end{split}
\end{equation*}
Using the fact that $\sum_{\beta}\Phi_{\beta}^2(s,t) < \infty$, we get
\begin{equation*}
    \begin{split}
        \mathbb{E}\|(\Lambda_{2}+\lambda I)^{-1}(T_{n}-\Lambda_{2})\|_{\text{op}} \lesssim &  \sqrt{\frac{\mathcal{N}_{2}(\lambda)}{{nm}\lambda}}.
    \end{split}
\end{equation*}
Then the result follows from Markov's inequality.
\end{proof}

\begin{lemma}\label{covariance_empirical_operator_with_embedding}
Under Assumptions~\ref{mean_moment}, \ref{covariance_eigen_decay} and \ref{covariance_embedding}, we have
    \begin{equation*}
        \|(\Lambda_{2}+\lambda I)^{-\frac{1}{2}}(\Lambda_{2}-T_{n})(\Lambda_{2}+\lambda I)^{-\frac{1}{2}}\|_{\emph{op}} \lesssim_p \sqrt{\frac{\mathcal{N}_{2}(\lambda)\lambda^{-2\alpha_{1}}}{nm}},
    \end{equation*}
    and
    \begin{equation*}
        \|(\Lambda_{2}+\lambda I)^{-\frac{1}{2}}(\Lambda_{2}-T_{n})(\Lambda_{2}+\lambda I)^{-1}\Lambda_{2}^{\frac{1}{2}+\alpha_{1}}\|_{\emph{op}} \lesssim_p \sqrt{\frac{\mathcal{N}_{2}(\lambda)}{nm}}.
    \end{equation*}
\end{lemma}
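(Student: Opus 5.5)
We will mirror the proof of Lemma~\ref{mean_empirical_operator}, now in the bivariate setting. Write $\Phi_\beta:=\sqrt{\xi_\beta}\,\Psi_\beta$, so that $\langle \Psi_l,T_n\Psi_\beta\rangle_{L^2(T\times T)}$ is the empirical average of $\Phi_l(t_{ij},t_{ik})\Phi_\beta(t_{ij},t_{ik})$.

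For the first bound, we first dominate the operator norm by the Hilbert--Schmidt norm, expanding in the eigenbasis $\{\Psi_\beta\}$:
\[
\|(\Lambda_{2}+\lambda I)^{-\frac{1}{2}}(\Lambda_{2}-T_{n})(\Lambda_{2}+\lambda I)^{-\frac{1}{2}}\|_{\text{op}}^2\le \sum_{l,\beta}\frac{\langle \Psi_l,(\Lambda_2-T_n)\Psi_\beta\rangle^2_{L^2(T\times T)}}{(\xi_l+\lambda)(\xi_\beta+\lambda)}.
\]
We then take expectations and apply Jensen's inequality. For the second moment of each coefficient, the variance computation from Lemma~\ref{covarianve_empirical_operator_without_embedding} (Cases 1--3 over the index pairs) applies verbatim. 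Case~1 contributes zero, and Cases~2--3 are bounded by $\int_{T\times T}\Phi_l^2\Phi_\beta^2$. Since the number of non-vanishing pairs is $O(m_i^3)$ against the normalisation $m_i^2(m_i-1)^2$, the variance is $\lesssim \frac{1}{n^2}\sum_i \frac{1}{m_i}$ times this integral. This gives
\[
\mathbb{E}\|\cdot\|^2_{\text{op}}\lesssim \frac{1}{nm}\int_{T\times T}\sum_{l}\frac{\Phi_l^2(s,t)}{\xi_l+\lambda}\sum_\beta\frac{\xi_\beta^{1-2\alpha_1}\,\xi_\beta^{2\alpha_1}\Psi_\beta^2(s,t)}{(\xi_\beta+\lambda)}\,ds\,dt .
\]
Next we use $\xi_\beta^{1-2\alpha_1}/(\xi_\beta+\lambda)\le \lambda^{-2\alpha_1}$, valid since $0<\alpha_1\le\frac12$. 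Assumption~\ref{covariance_embedding}, in the form $\sum_\beta \xi_\beta^{2\alpha_1}\Psi_\beta^2(s,t)\le Z_1^2$ a.e. (by \cite[Theorem~9]{Steinwart_2020_sobolev_norm}), then bounds the inner $\beta$-sum by $Z_1^2\lambda^{-2\alpha_1}$. Finally $\int\sum_l \Phi_l^2/(\xi_l+\lambda)=\sum_l \xi_l/(\xi_l+\lambda)=\mathcal N_2(\lambda)$, and Markov's inequality yields the first claim.

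For the second bound we proceed identically, with right weight $\xi_\beta^{1+2\alpha_1}/(\xi_\beta+\lambda)^2$ in place of $(\xi_\beta+\lambda)^{-1}$. After writing $\Phi_\beta^2=\xi_\beta\Psi_\beta^2$, the $\beta$-sum becomes $\sum_\beta \frac{\xi_\beta^{2}}{(\xi_\beta+\lambda)^2}\xi_\beta^{2\alpha_1}\Psi_\beta^2(s,t)\le Z_1^2$, because $\xi_\beta/(\xi_\beta+\lambda)\le1$. No factor of $\lambda$ is lost here, and we obtain $\mathbb{E}\|\cdot\|^2\lesssim \mathcal N_2(\lambda)/(nm)$. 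Note that $\mathcal N_2(\lambda)\lesssim\lambda^{-1/b_1}$ follows from Assumption~\ref{covariance_eigen_decay}, although only $\mathcal N_2$ itself is needed in the statement.

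The main obstacle is the variance step with the U-statistic structure over pairs $j\ne k$. The Case~3 terms ($j=j'$, $k\ne k'$) are more numerous, of order $m_i^3$, than the Case~2 terms, of order $m_i^2$. We must check that after dividing by $m_i^2(m_i-1)^2$ they are still $O(1/m_i)$, and that the Cauchy--Schwarz bound for Case~3 produces exactly $\int\Phi_l^2\Phi_\beta^2$, so that it is summable against the embedding constant. We would verify this carefully, treating it as in Lemma~\ref{covarianve_empirical_operator_without_embedding}. The harmonic-mean definition of $m$ converts $\frac1n\sum_i m_i^{-1}$ into $m^{-1}$.
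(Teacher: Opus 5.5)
Your proposal is correct and follows essentially the same route as the paper. Like the paper, you bound the operator norm by the Hilbert--Schmidt norm in the eigenbasis $\{\Psi_\beta\}$, and you reuse the index-case variance bound from Lemma~\ref{covarianve_empirical_operator_without_embedding} to get the factor $\frac{1}{n^2}\sum_i m_i^{-1}=\frac{1}{nm}$. You then split $\xi_\beta=\xi_\beta^{1-2\alpha_1}\xi_\beta^{2\alpha_1}$, apply the embedding bound $\sum_\beta\xi_\beta^{2\alpha_1}\Psi_\beta^2\le Z_1^2$, use $\int\sum_l\Phi_l^2/(\xi_l+\lambda)=\mathcal{N}_2(\lambda)$, and finish with Markov/Chebyshev. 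The second bound is handled identically with the weight $\xi_\beta^{1+2\alpha_1}/(\xi_\beta+\lambda)^2$, exactly as in the paper.
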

\begin{proof}
    Since the proof idea is similar to that of Lemma~\ref{covarianve_empirical_operator_without_embedding}, we only provide necessary details. Note that 
\begin{equation*}
\begin{split}
& \|(\Lambda_{2}+\lambda I)^{-\frac{1}{2}}(\Lambda_{2}-T_{n})(\Lambda_{2}+\lambda I)^{-\frac{1}{2}}\|_{\text{op}}\\
&=  \sup_{h \in L^2(T \times T),~ \|h\|=1} \sum_{l, \beta} \frac{\langle h, \Psi_{l} \rangle_{L^2(T \times T)} \langle h, \Psi_{\beta} \rangle_{L^2(T \times T)} }{(\xi_{l}+\lambda )^{\frac{1}{2}}(\xi_{\beta}+\lambda )^{\frac{1}{2}}} \langle \Psi_{l}, (\Lambda_{2}-T_{n})\Psi_{\beta} \rangle_{L^2(T \times T)}. 
\end{split}
\end{equation*}
By applying Cauchy-Schwartz, we get
\begin{align*}
    \begin{split}
        & \|(\Lambda_{2}+\lambda I)^{-\frac{1}{2}}(\Lambda_{2}-T_{n})(\Lambda_{2}+\lambda I)^{-\frac{1}{2}}\|_{\text{op}}\\
        &\leq  \sup_{h \in L^2(T \times T), \|h\|= 1} \left(\sum_{l, \beta} \frac{ \langle \Psi_{l}, (\Lambda_{2}-T_{n})\Psi_{\beta} \rangle^2_{L^2(T \times T)}}{(\xi_{l}+\lambda )(\xi_{\beta}+\lambda )} \right)^{\frac{1}{2}} \|h\|_{L^2(T \times T)}\\
        &\leq  \left(\sum_{l, \beta} \frac{ \langle \Psi_{l}, (\Lambda_{2}-T_{n})\Psi_{\beta} \rangle^2_{L^2(T \times T)}}{(\xi_{l}+\lambda )(\xi_{\beta}+\lambda )} \right)^{\frac{1}{2}}.
    \end{split}
\end{align*}

\noindent
Applying Jensen's inequality, we get
\begin{align*}
    \mathbb{E}\|(\Lambda_{2}+\lambda I)^{-\frac{1}{2}}(\Lambda_{2}-T_{n})(\Lambda_{2}+\lambda I)^{-\frac{1}{2}}\|_{\text{op}}
    \leq \left(\sum_{l, \beta} \frac{ \mathbb{E}\langle \Psi_{l}, (\Lambda_{2}-T_{n})\Psi_{\beta} \rangle^2_{L^2(T \times T)}}{(\xi_{l}+\lambda )(\xi_{\beta}+\lambda )} \right)^{\frac{1}{2}}.
\end{align*}
It follows from the proof of Lemma~\ref{covarianve_empirical_operator_without_embedding} that
\begin{equation*}
    \begin{split}
        & \mathbb{E}\|(\Lambda_{2}+\lambda I)^{-\frac{1}{2}}(\Lambda_{2}-T_{n})(\Lambda_{2}+\lambda I)^{-\frac{1}{2}}\|_{\text{op}}
         \lesssim  \left(\frac{1}{n^2}\sum_{i=1}^{n}\frac{1}{m_{i}}\sum_{l, \beta} \frac{\int_{T \times T}\Phi_{l}^2(s,t)\Phi_{\beta}^2(s,t)\,dsdt}
        {(\xi_{l}+\lambda)(\xi_{\beta}+\lambda)}\right)^{\frac{1}{2}}\\
        & =  \left(\frac{1}{n^2}\sum_{i=1}^{n}\frac{1}{m_{i}}\sum_{l, \beta} \frac{\int_{T \times T}\Phi_{l}^2(s,t)\xi_{\beta}^{1-2\alpha_{1}}\xi_{\beta}^{2\alpha_{1}}\Psi_{\beta}^2(s,t)\,dsdt}
        {(\xi_{l}+\lambda)(\xi_{\beta}+\lambda)^{1-2 \alpha_{1}}(\xi_{\beta}+\lambda)^{2 \alpha_{1}}}\right)^{\frac{1}{2}}\\
         &\leq \left(\frac{1}{nm}\lambda^{-2\alpha_{1}}\sum_{l, \beta}\frac{\int_{T \times T}\Phi_{l}^2(s,t)\xi_{\beta}^{2\alpha_{1}}\Psi_{\beta}^2\,dsdt}{(\xi_{l}+\lambda)}\right)^{\frac{1}{2}}\\
        & \lesssim  \sqrt{\frac{\mathcal{N}_{2}(\lambda)\lambda^{-2\alpha_{1}}}{nm}},
    \end{split}
\end{equation*}
and the result follows from Chebyshev inequality.\\

\noindent
For the second part, observe that
\begin{equation*}
    \begin{split}
       & \mathbb{E}\|(\Lambda_{2}+\lambda I)^{-\frac{1}{2}}(\Lambda_{2}-T_{n})(\Lambda_{2}+\lambda I)^{-1}\Lambda_{2}^{\alpha_{1}+\frac{1}{2}}\|_{L^2(T \times T)} \\&\lesssim  \left(\frac{1}{n^2}\sum_{i=1}^{n}\frac{1}{m_{i}}\sum_{l, \beta}\frac{\int_{T \times T}\xi_{\beta}^{1+2\alpha_{1}}\Phi_{l}^2(s,t)\Phi_{\beta}^2(s,t)\,dsdt}{(\xi_{l}+\lambda)(\xi_{\beta}+\lambda)^2}\right)^{\frac{1}{2}}\\
        &\leq  \left(\frac{1}{nm}\sum_{l, \beta}\frac{\int_{T \times T}\xi_{\beta}^{2\alpha_{1}}\Phi_{l}^2(s,t)\Psi_{\beta}^2(s,t)\,dsdt}{(\xi_{l}+\lambda)}\right)^{\frac{1}{2}}
        \lesssim \sqrt{\frac{\mathcal{N}_{2}(\lambda)}{nm}},
    \end{split}
\end{equation*}
where last step follows using Assumption~\ref{covariance_embedding}. Hence the result follows from Chebyshev inequality.
\end{proof}

\begin{lemma}\label{covariance_power_bound}
    Under Assumptions~\ref{mean_moment} and \ref{covariance_eigen_decay}, 
    we have 
    \begin{equation*}
        \|(\Lambda_{2}+\lambda I)(T_{n}+\lambda I)^{-1}\|_{\emph{op}} \le_p 2, ~ \quad \forall~\lambda \gtrsim (mn)^{-\frac{b_{1}}{1+b_{1}}}.
    \end{equation*}
\noindent
Further, if Assumption~\ref{covariance_embedding} holds, then we obtain
\begin{equation*}
     \|(\Lambda_{2}+\lambda I)^{\frac{1}{2}}(T_{n}+\lambda I)^{-\frac{1}{2}}\|_{\emph{op}} \leq_p 2, ~ \quad \forall~\lambda \gtrsim (mn)^{-\frac{b_{1}}{1+2 \alpha_{1} b_{1}}}.
\end{equation*}
\end{lemma}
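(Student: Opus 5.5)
The proof will mirror that of Lemma~\ref{mean_powers_constant_bound}, with $\hat{A}_n,\Lambda_1$ replaced by $T_n,\Lambda_2$, using a Neumann series argument. For the first claim, write
\begin{equation*}
(\Lambda_{2}+\lambda I)(T_{n}+\lambda I)^{-1} = \left(I-(\Lambda_{2}-T_{n})(\Lambda_{2}+\lambda I)^{-1}\right)^{-1},
\end{equation*}
which follows from $T_n+\lambda I=(I-(\Lambda_2-T_n)(\Lambda_2+\lambda I)^{-1})(\Lambda_2+\lambda I)$. This gives
\begin{equation*}
\|(\Lambda_{2}+\lambda I)(T_{n}+\lambda I)^{-1}\|_{\text{op}}\le \frac{1}{1-\|(\Lambda_{2}-T_{n})(\Lambda_{2}+\lambda I)^{-1}\|_{\text{op}}},
\end{equation*}
provided the norm in the denominator is less than one. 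Since $T_n$ and $\Lambda_2$ are self-adjoint, $\|(\Lambda_2-T_n)(\Lambda_2+\lambda I)^{-1}\|_{\text{op}}=\|(\Lambda_2+\lambda I)^{-1}(T_n-\Lambda_2)\|_{\text{op}}$. Lemma~\ref{covarianve_empirical_operator_without_embedding} then bounds this quantity by $\sqrt{\mathcal{N}_2(\lambda)/(nm\lambda)}$ in probability, and Assumption~\ref{covariance_eigen_decay} gives $\mathcal{N}_2(\lambda)\lesssim\lambda^{-1/b_1}$ by the same computation as in the mean case. So the deviation is $\lesssim_p \lambda^{-\frac{1}{2b_1}-\frac12}(nm)^{-1/2}$.

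This is at most $\tfrac12$ exactly when $\lambda\gtrsim (nm)^{-\frac{b_1}{1+b_1}}$. Choosing the implicit constant large, relative to the $K_\delta$ in the definition of $\lesssim_p$, makes the event $\{\text{deviation}\le \tfrac12\}$ have probability at least $\delta$. On that event the operator norm is at most $2$.

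For the second claim, write
\begin{equation*}
(T_n+\lambda I)=(\Lambda_2+\lambda I)^{\frac12}\left(I-(\Lambda_2+\lambda I)^{-\frac12}(\Lambda_2-T_n)(\Lambda_2+\lambda I)^{-\frac12}\right)(\Lambda_2+\lambda I)^{\frac12}.
\end{equation*}
Set $S:=(\Lambda_2+\lambda I)^{-\frac12}(\Lambda_2-T_n)(\Lambda_2+\lambda I)^{-\frac12}$, which is self-adjoint. This yields
\begin{equation*}
\|(\Lambda_2+\lambda I)^{\frac12}(T_n+\lambda I)^{-1}(\Lambda_2+\lambda I)^{\frac12}\|_{\text{op}}\le (1-\|S\|_{\text{op}})^{-1}.
\end{equation*}
The left side equals $\|(\Lambda_2+\lambda I)^{\frac12}(T_n+\lambda I)^{-\frac12}\|_{\text{op}}^2$, via $\|B^*B\|=\|B\|^2$ with $B=(T_n+\lambda I)^{-\frac12}(\Lambda_2+\lambda I)^{\frac12}$. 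Hence if $\|S\|_{\text{op}}\le\tfrac12$, the target norm is at most $\sqrt2\le 2$.

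Now invoke the first bound of Lemma~\ref{covariance_empirical_operator_with_embedding}: $\|S\|_{\text{op}}\lesssim_p\sqrt{\mathcal{N}_2(\lambda)\lambda^{-2\alpha_1}/(nm)}\lesssim \lambda^{-\frac{1}{2b_1}-\alpha_1}(nm)^{-1/2}$. This is $\le\tfrac12$ precisely when $\lambda\gtrsim (nm)^{-\frac{b_1}{1+2\alpha_1 b_1}}$, again taking the constant large.

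The only delicate point is the bookkeeping of the probabilistic statement. The bound "$\le_p 2$" should be read as: for every $\delta$ there is a constant $c_\delta$ such that, for $\lambda\ge c_\delta (nm)^{-\cdots}$, the inequality holds with probability at least $\delta$. This requires the constant in the lower bound on $\lambda$ to depend on $\delta$, exactly as in Lemma~\ref{mean_powers_constant_bound}, and I would state it that way.
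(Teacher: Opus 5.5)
Your proposal is correct and follows the paper's proof essentially step for step. Both arguments use a Neumann-series bound for $(I-\cdot)^{-1}$, combined with Lemma~\ref{covarianve_empirical_operator_without_embedding} for the first claim and Lemma~\ref{covariance_empirical_operator_with_embedding} for the second. Your treatment of the second claim is in fact slightly more careful than the paper's: you note that $\|(\Lambda_2+\lambda I)^{\frac12}(T_n+\lambda I)^{-\frac12}\|_{\text{op}}^2=\|(I-S)^{-1}\|_{\text{op}}$, whereas the paper writes this as an equality without the square, and this gives you the sharper bound $\sqrt{2}\le 2$.
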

\begin{proof}
   Observe that
   \begin{align*}
   \begin{split}
       \|(\Lambda_{2}+\lambda I)(T_{n}+\lambda I)^{-1}\|_{\text{op}} & =  \|(I-(\Lambda_{2}+\lambda I)^{-1}(\Lambda_{2}-T_{n}))^{-1}\|_{\text{op}}\\
       & \leq  \frac{1}{1-\|(\Lambda_{2}+\lambda I)^{-1}(\Lambda_{2}-T_{n})\|_{\text{op}}},
       \end{split}
   \end{align*}
provided $\|(\Lambda_{2}+\lambda I)^{-1}(\Lambda_{2}-T_{n})\|_{\text{op}} <1$, which follows by using $\lambda \gtrsim (mn)^{-\frac{b_{1}}{1+b_{1}}}$ in Lemma~\ref{covarianve_empirical_operator_without_embedding} and the result follows. Similarly, note that
\begin{align*}
    \begin{split}
        \|(\Lambda_{2}+\lambda I)^{\frac{1}{2}}(T_{n}+\lambda I)^{-\frac{1}{2}}\|_{\text{op}} & =  \|(I-(\Lambda_{2}+\lambda I)^{-\frac{1}{2}}(\Lambda_{2}-T_{n})(\Lambda_{2}+\lambda I)^{-\frac{1}{2}})^{-1}\|_{\text{op}}\\
        & \leq  \frac{1}{1-\|(\Lambda_{2}+\lambda I)^{-\frac{1}{2}}(\Lambda_{2}-T_{n})(\Lambda_{2}+\lambda I)^{-\frac{1}{2}}\|_{\text{op}}},
    \end{split}
\end{align*}
provided $\|(\Lambda_{2}+\lambda I)^{-\frac{1}{2}}(\Lambda_{2}-T_{n})(\Lambda_{2}+\lambda I)^{-\frac{1}{2}}\|_{\text{op}}<1$, which follows by using $ \lambda \gtrsim (mn)^{-\frac{b_{1}}{1+2 \alpha_{1} b_{1}}}$ in Lemma~\ref{covariance_empirical_operator_with_embedding}, and the result follows.
\end{proof}

\section{Technical Results}\label{app:tech}
\numberwithin{equation}{section}
In this appendix, we collect some technical results that are used in proving the main results of the paper.
\begin{lemma}\cite[Lemma A.3]{gupta2025optimal}
  \label{reducing_power}
  Let $T$ and $\hat{T}$ be positive operators from $H$ to $H$. Then, for any $n \geq 1$, we have
    \begin{align*}
        (\hat{T}+\lambda I )^{-n} - (T+\lambda I )^{-n} = & (\hat{T}+\lambda I )^{-(n-1)}[(\hat{T}+\lambda I )^{-1} - (T+\lambda I )^{-1}]\\
        & + \sum_{i=1}^{n-1}(\hat{T}+\lambda I )^{-i}(T-\hat{T})(T+\lambda I )^{-(n+1-i)}.
\end{align*}
\end{lemma}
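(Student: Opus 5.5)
This is an algebraic identity, so I will prove it by induction on $n$. Write $\hat R:=(\hat{T}+\lambda I)^{-1}$ and $R:=(T+\lambda I)^{-1}$. Both are bounded, since $T,\hat T$ are positive and $\lambda>0$.

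The basic tool is the resolvent identity
$$\hat R - R = \hat R\big((T+\lambda I)-(\hat T+\lambda I)\big)R = \hat R\,(T-\hat T)\,R.$$
The base case $n=1$ is immediate: the sum is empty and the right-hand side reduces to $\hat R - R$.

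For the inductive step, assume the formula for $n$. Telescope as
$$\hat R^{\,n+1}-R^{\,n+1} = \hat R^{\,n}(\hat R - R) + (\hat R^{\,n}-R^{\,n})R .$$
The first term is exactly the leading term $\hat R^{\,n}[\hat R - R]$ required for index $n+1$. For the second term, insert the induction hypothesis and multiply on the right by $R$:
$$(\hat R^{\,n}-R^{\,n})R = \hat R^{\,n-1}(\hat R-R)R + \sum_{i=1}^{n-1}\hat R^{\,i}(T-\hat T)R^{\,n+2-i}.$$
By the resolvent identity, $\hat R^{\,n-1}(\hat R-R)R=\hat R^{\,n}(T-\hat T)R^{\,2}$, which is precisely the $i=n$ summand $\hat R^{\,i}(T-\hat T)R^{\,(n+1)+1-i}$. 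Adding it to the existing sum gives $\sum_{i=1}^{n}\hat R^{\,i}(T-\hat T)R^{\,(n+1)+1-i}$, which is the claimed formula for $n+1$.

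The only step that needs care is the index bookkeeping: checking that the exponent $n+1-i$, after right multiplication by $R$, becomes $(n+1)+1-i$, and that the new term is the $i=n$ summand. No analytic difficulty arises, because all operators are bounded and commutativity is never used. Only left and right factors are manipulated, and the order is preserved throughout.
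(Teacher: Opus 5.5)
Your induction is correct: the splitting $\hat R^{\,n+1}-R^{\,n+1}=\hat R^{\,n}(\hat R-R)+(\hat R^{\,n}-R^{\,n})R$ together with the resolvent identity $\hat R-R=\hat R(T-\hat T)R$ produces exactly the new $i=n$ summand, and the shift of exponents $n+1-i\mapsto (n+1)+1-i$ checks out. The paper gives no proof of its own and imports the lemma from the cited reference; your argument is the standard one (equivalently, it is the unrolled telescoping $\hat R^{\,n}-R^{\,n}=\sum_{k=0}^{n-1}\hat R^{\,n-1-k}(\hat R-R)R^{\,k}$ with the resolvent identity applied to every term except $k=0$).
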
  

\begin{lemma}\cite[Lemma A.10]{gupta2025optimal}\label{sup_bound}
Let $a,m,p,q$ and $l$ be positive numbers. Then for $r = \min \{p,\frac{lq}{m}+a\}$ and $a< p$, we have
    $$\sup_{i \in \mathbb{N}}\left[\frac{i^{-(p-a)m}}{(i^{-q}+\lambda)^l}\right] \leq \lambda^{\frac{(r-a)m-ql}{q}},~ \forall~ \lambda >0.$$
\end{lemma}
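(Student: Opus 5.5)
The statement is Lemma~\ref{sup_bound}, a deterministic bound on $\sup_{i\in\mathbb{N}} i^{-(p-a)m}/(i^{-q}+\lambda)^l$. My plan is to relax the integer index $i$ to a continuous variable, split into the two regimes $i^{-q}\ge\lambda$ and $i^{-q}<\lambda$, and use the exponent $r$ to decide which side of the threshold contains the supremum.

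First I set $x=i^{-q}\in(0,1]$, so $i^{-(p-a)m}=x^{(p-a)m/q}$. Writing $s:=(p-a)m/q>0$, the quantity becomes $\phi(x)=x^{s}/(x+\lambda)^l$, and it suffices to bound $\sup_{x\in(0,1]}\phi(x)$. I then split according to the relative size of $x$ and $\lambda$:
\begin{itemize}
\item If $x\ge\lambda$, then $x+\lambda\asymp x$ and $\phi(x)\asymp x^{s-l}$.
\item If $x<\lambda$, then $x+\lambda\asymp\lambda$ and $\phi(x)\asymp x^{s}\lambda^{-l}\le\lambda^{s-l}$.
\end{itemize}

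The analysis now turns on the sign of $s-l$, and this is exactly what $r$ encodes. Since $r=\min\{p,\, lq/m+a\}$, one has $s\ge l$ iff $p\ge lq/m+a$ iff $r=lq/m+a$. In that case $(r-a)m-ql=0$, so the claimed bound is $\lambda^0=1$. On the region $x\ge\lambda$ we get $x^{s-l}\le1$ because $x\le1$. On the region $x<\lambda$ we get $\lambda^{s-l}\le1$, at least for $\lambda\le1$. In the other case $s<l$ we have $r=p$, and the target exponent is $((p-a)m-ql)/q=s-l<0$. On $x\ge\lambda$ the function $x^{s-l}$ is decreasing, so it is at most $\lambda^{s-l}$; on $x<\lambda$ we again get at most $\lambda^{s-l}$. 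Both regions therefore give the claimed $\lambda^{(r-a)m/q-l}$.

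The main obstacle is keeping the constants exactly at $1$, as stated, rather than merely up to a factor, together with handling $\lambda>1$ in the case $s\ge l$. For the constants I would avoid the $\asymp$ steps and use the sharper inequalities $x+\lambda\ge x$ (for the region $x\ge\lambda$) and $x+\lambda\ge\lambda$ (for $x<\lambda$). These hold with constant $1$ and directly give $\phi\le x^{s-l}$ and $\phi\le x^s\lambda^{-l}$, so no factor of $2^l$ enters.

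For $\lambda>1$ with $s\ge l$, the bound still holds: every $x\in(0,1]$ lies below $\lambda$, so $\phi(x)\le x^s\lambda^{-l}\le 1$. With those two adjustments the displayed inequality follows for every $\lambda>0$.
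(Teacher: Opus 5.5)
Your proof is correct. Substituting $x=i^{-q}\in(0,1]$ and $s=(p-a)m/q$, the pointwise inequalities $x+\lambda\ge x$ and $x+\lambda\ge\lambda$ give $\phi(x)\le\min\{x^{s-l},\,x^{s}\lambda^{-l}\}$ with constant exactly $1$. You also correctly match the two branches of $r$ to the sign of $s-l$.

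The paper gives no proof of this lemma to compare against; it imports it from \cite[Lemma A.10]{gupta2025optimal}. A shorter route avoids both your case split on $s-l$ and the regions $x\gtrless\lambda$. Since $r\le p$ and $i\ge 1$, you have $i^{-(p-a)m}\le i^{-(r-a)m}=x^{t}$ with $t:=(r-a)m/q\le l$. Then
\[
\frac{x^{t}}{(x+\lambda)^{l}}=\Big(\frac{x}{x+\lambda}\Big)^{t}(x+\lambda)^{t-l}\le\lambda^{t-l},
\]
which is the claimed bound for every $\lambda>0$ at once, with no separate treatment of $\lambda>1$. Your version is longer but more informative: it shows where the supremum is essentially attained ($i^{-q}\approx\lambda$ when $s<l$, and the endpoint $i=1$ when $s\ge l$), so the bound is tight up to constants for $\lambda\le 1$. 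Finally, in the case $s\ge l$ your $\lambda\le 1$ / $\lambda>1$ split is unnecessary, because $\phi(x)\le x^{s-l}\le 1$ holds on all of $(0,1]$ for every $\lambda$.
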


\begin{lemma}\cite[Lemma A.11]{gupta2025optimal}
\label{ch_2:seriessum}
    For $\alpha >1$, $\beta >1,$ and $q \geq \frac{\alpha}{\beta}$, we have 
    $$\sum_{i\in \mathbb{N}}\frac{i^{-\alpha}}{(i^{-\beta}+\lambda)^q} \lesssim \lambda^{-\frac{1+\beta q -\alpha}{\beta}}, ~~ \forall ~\lambda >0.$$
\end{lemma}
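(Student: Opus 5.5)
The plan is to split the series at the index where the two terms in the denominator balance. Set $N:=\lambda^{-1/\beta}$, so that $i^{-\beta}\ge\lambda$ exactly when $i\le N$. I will bound the sum separately over $i\le N$ and over $i>N$, and show that each piece is $\lesssim\lambda^{-\frac{1+\beta q-\alpha}{\beta}}$.

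\emph{Head ($i\le N$).} Here $i^{-\beta}+\lambda\ge i^{-\beta}$, so each summand is at most $i^{-\alpha}i^{\beta q}=i^{\beta q-\alpha}$. The hypothesis $q\ge\frac{\alpha}{\beta}$ gives $\beta q-\alpha\ge 0>-1$. Comparing with an integral (the case $\beta q-\alpha=0$ is just counting terms),
\begin{equation*}
\sum_{i\le N} i^{\beta q-\alpha}\lesssim N^{1+\beta q-\alpha}=\lambda^{-\frac{1+\beta q-\alpha}{\beta}}.
\end{equation*}
If $N<1$, i.e.\ $\lambda>1$, the head is empty and there is nothing to prove.

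\emph{Tail ($i>N$).} Here $i^{-\beta}+\lambda\ge\lambda$, so each summand is at most $\lambda^{-q}i^{-\alpha}$. Since $\alpha>1$, the tail of $\sum i^{-\alpha}$ beyond $N$ is $\lesssim N^{1-\alpha}$ when $N\ge1$. This gives
\begin{equation*}
\lambda^{-q}N^{1-\alpha}=\lambda^{-q+\frac{\alpha-1}{\beta}}=\lambda^{-\frac{1+\beta q-\alpha}{\beta}},
\end{equation*}
which is the desired rate.

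\emph{Regime $\lambda>1$ ($N<1$).} The tail is then the whole series, bounded by $\zeta(\alpha)\lambda^{-q}$. Since $\frac{\alpha-1}{\beta}>0$ and $\lambda\ge1$, we have $\lambda^{-q}\le\lambda^{-q}\lambda^{\frac{\alpha-1}{\beta}}$, so the claimed bound holds for all $\lambda>0$.

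The only delicate point is making the integral comparisons uniform in $\lambda$, including the boundary case $\beta q=\alpha$ and the regime $\lambda\ge1$. The role of each hypothesis is as follows: $\alpha>1$ makes the tail summable, $q\ge\alpha/\beta$ makes the head polynomially growing, and $\beta>1$ is only needed for $N$ to be well defined as above.
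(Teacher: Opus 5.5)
Your proof is correct and complete. The paper gives no argument for this lemma; it imports it from \cite[Lemma A.11]{gupta2025optimal}, so there is no in-paper proof to compare against. Your argument checks out at every step:
\begin{itemize}
\item the split at $N=\lambda^{-1/\beta}$, with the head bounded by $i^{\beta q-\alpha}$ and the tail by $\lambda^{-q}i^{-\alpha}$;
\item the tail estimate $\sum_{i>N}i^{-\alpha}\lesssim N^{1-\alpha}$ for $N\ge 1$;
\item the exponent identity $-q+\frac{\alpha-1}{\beta}=-\frac{1+\beta q-\alpha}{\beta}$;
\item the separate treatment of $\lambda>1$, where the head is empty and $\lambda^{-q}\le\lambda^{-q+\frac{\alpha-1}{\beta}}$.
\end{itemize}

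Three small remarks follow.

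\textbf{Head bound.} You do not need an integral comparison. Since $\beta q-\alpha\ge 0$, you have $\sum_{i\le N} i^{\beta q-\alpha}\le \lfloor N\rfloor\, N^{\beta q-\alpha}\le N^{1+\beta q-\alpha}$ with constant $1$. This handles the boundary case $\beta q=\alpha$ and the uniformity-in-$\lambda$ concern at once.

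\textbf{Role of $\beta>1$.} Your closing comment is slightly off. $N=\lambda^{-1/\beta}$ is well defined for every $\beta>0$, and no step of your argument uses more than $\beta>0$, together with $q>0$, which follows from $q\ge\alpha/\beta$. The hypothesis $\beta>1$ is simply not needed for this lemma.

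\textbf{Why the split is the cleaner route.} The other common approach bounds the sum by $\int_0^\infty x^{\beta q-\alpha}(1+\lambda x^\beta)^{-q}\,dx$ and rescales via $x=\lambda^{-1/\beta}y$. Because the summand is unimodal rather than monotone, that route needs an extra term for the peak, of order $\lambda^{-(\beta q-\alpha)/\beta}$. This term is dominated by the target rate only for $\lambda\le 1$, so large $\lambda$ must be handled separately anyway. Your split handles all regimes directly.
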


\begin{lemma}[Varshamov-Gilbert bound \cite{tsyback2009lb}] 
\label{VGbound}
Let $M \geq 8$. Then there exists a subset $\Theta = \{\theta^{(0)},\ldots,\theta^{(N)}\} \subset \{0,1\}^{M}$ such that $\theta^{(0)}=(0,\cdots,0)$,
\begin{equation*}
    H(\theta,\theta^{'}) > \frac{M}{8}, \quad \forall ~~ \theta \neq \theta^{'} \in \Theta,
\end{equation*}
where $\displaystyle H(\theta, \theta^{'}) = \sum_{i=1}^{M}(\theta_{i}-\theta^{'}_{i})^2 $ is the Hamming distance and $N \geq 2^{\frac{M}{8}}$.
\end{lemma}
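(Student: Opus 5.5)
The plan is a greedy (maximal packing) construction combined with a volume-counting argument in the Hamming cube $\{0,1\}^M$. Set $d := \lfloor M/8 \rfloor$; since $M \geq 8$ we have $1 \leq d \leq M/8$.

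\textbf{Construction.} Start with $\theta^{(0)} = (0,\ldots,0)$. Repeatedly add any point of $\{0,1\}^M$ whose Hamming distance to every point already chosen is strictly larger than $M/8$, and stop when no such point exists. Call the resulting set $\Theta = \{\theta^{(0)},\ldots,\theta^{(N)}\}$. By construction $\theta^{(0)}$ is the zero vector and $H(\theta,\theta') > M/8$ for all $\theta \neq \theta'$ in $\Theta$. It remains to show that $N \geq 2^{M/8}$.

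\textbf{Covering step.} By maximality, every $x \in \{0,1\}^M$ satisfies $H(x,\theta^{(i)}) \leq M/8$ for some $i$. Since Hamming distances are integers, this means $H(x,\theta^{(i)}) \leq d$. Hence the $N+1$ Hamming balls of radius $d$ centred at the points of $\Theta$ cover the cube, so
\[
(N+1)\sum_{i=0}^{d}\binom{M}{i} \;\geq\; 2^{M}.
\]

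\textbf{Bounding the ball volume (main step).} I would show that
\[
\sum_{i\le d}\binom{M}{i} \;\le\; 2^{M h(d/M)}, \qquad h(p) = -p\log_2 p-(1-p)\log_2(1-p).
\]
Take $p = d/M \leq 1/8 < 1/2$. Then
\[
1 \;\geq\; \sum_{i\le d}\binom{M}{i}p^i(1-p)^{M-i} \;\geq\; p^d(1-p)^{M-d}\sum_{i\le d}\binom{M}{i},
\]
where the second inequality holds because $p/(1-p) \leq 1$, so the weights $p^i(1-p)^{M-i}$ are non-increasing in $i$ and each is at least $p^d(1-p)^{M-d}$. Rearranging gives the bound, since $p^{-d}(1-p)^{-(M-d)} = 2^{M h(p)}$. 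Because $h$ is increasing on $[0,1/2]$, we get $h(d/M) \leq h(1/8) \approx 0.544$.

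\textbf{Conclusion.} Combining the two displays gives
\[
N+1 \;\geq\; 2^{M(1-h(1/8))} \;\geq\; 2^{0.45M}.
\]
So it suffices that $2^{0.45M} - 1 \geq 2^{M/8}$. This holds for all $M \geq 8$: at $M = 8$ the left side is about $2^{3.6}-1 > 11$ while the right side is $2$, and the gap only widens as $M$ grows. This is the only place where the assumption $M \geq 8$ is used, beyond guaranteeing $d \geq 1$.
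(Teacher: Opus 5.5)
Your proof is correct and follows essentially the standard argument behind the cited result; the paper gives no proof of its own and only refers to the Varshamov--Gilbert lemma in Tsybakov's book. That argument, like yours, takes a greedy maximal packing started at the zero vector, uses it to cover $\{0,1\}^M$ by Hamming balls of radius $\lfloor M/8\rfloor$, and then bounds the ball volume exponentially. The only difference is the volume bound: you bound $\sum_{i\le d}\binom{M}{i}$ via the binary-entropy inequality, whereas Tsybakov writes it as $2^M\,\mathbb{P}(\xi_1+\cdots+\xi_M\le d)$ for i.i.d.\ Bernoulli$(1/2)$ variables and applies Hoeffding's inequality. Your version gives a slightly better exponent ($1-h(1/8)\approx 0.456$ versus $9/(32\ln 2)\approx 0.406$), but either comfortably yields $N\ge 2^{M/8}$ for $M\ge 8$.
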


\end{document}